\documentclass[10pt]{amsart}

\usepackage{amssymb, mathdots,url}
\usepackage{braket}
\usepackage{verbatim}
\usepackage{mathtools}
\usepackage[usenames]{xcolor}
\usepackage[all]{xy}
\usepackage{paralist}
\usepackage{tikz-cd}
\usepackage[colorlinks,pagebackref=true]{hyperref}
\hypersetup{colorlinks,linkcolor={blue},citecolor={blue},urlcolor={red}}

\newcommand{\cat}[1]{\operatorname{\mathsf{#1}}}

\newcommand{\rmitem}[1]{\item[\text{\textup{(#1)}}]}

\newtheorem{theorem}{Theorem}[section]
\newtheorem{lemma}[theorem]{Lemma}
\newtheorem{proposition}[theorem]{Proposition}
\newtheorem{prop}[theorem]{Proposition}

\newtheorem{corollary}[theorem]{Corollary}

\theoremstyle{definition}

\newtheorem{definition}[theorem]{Definition}
\newtheorem{example}[theorem]{Example}
\theoremstyle{remark}
\newtheorem{remark}[theorem]{Remark}

\def\Hom{{\rm Hom}}

\newcommand{\Ai}{A_{{\rm idm}}}
\newcommand{\Bi}{B_{{\rm idm}}}
\def\RHom{{\rm RHom}}
\newcommand{\ot}{\otimes}
\newcommand{\op}{\mathrm{op}}

\title{Some homological aspects of idempotents in idempotented algebras}

\author{Arnab Mitra}
\address{Tata Institute of Fundamental Research, Mumbai, India}
\email{00.arnab.mitra@gmail.com}
\author{Rishi Vyas}
\email{vyas.rishi@gmail.com}

%\thanks{}
\date{\today}

\numberwithin{equation}{section}

\begin{document}

\setcounter{tocdepth}{1}
%\date{\today}
\keywords{Idempotented algebras, special idempotents, path algebras}
\subjclass[2010]{16E99 (Primary); 16Y99, 16E35, 22E50, 16D90 (Secondary)}

\begin{abstract}
We study special idempotents (as described by Bushnell and Kutzko) and split idempotents in the context of module and derived categories for idempotented algebras. We then characterize these concepts for path algebras of quivers. 
\end{abstract}

\maketitle
\tableofcontents

\section{Introduction}

An associative algebra $A$ is said to be idempotented if given any element $a\in A$, there exists an idempotent $e\in A$ (i.e. $e^{2}=e$) such that $ae=ea=a$, and given any two idempotents $e_{1},e_{2}\in A$, there exists an idempotent $e\in A$ such that $ee_{i}=e_{i}e=e_{i}$ for $i=1,2$. Every unital algebra is idempotented, but non-unital examples do appear in natural contexts.

The notion of a {\it special idempotent} was introduced in \cite{BK} for Hecke algebras of $p$-adic reductive groups, which form a prominent class of non-unital idempotented algebras (see \S \ref{ss:red_grp} for details). These idempotents play a fundamental role in the {\it theory of types} and are objects of interest in the representation theory of $p$-adic groups. In \cite{SZ}, analogues of results in \cite{BK} for special idempotents  were obtained for the Harish-Chandra Schwartz algebra of a $p$-adic reductive group. Given their importance, it is natural to study these idempotents in greater generality and develop their formalism: this is one of the purposes of this note. In \S \ref{sim}, we study special idempotents for arbitrary idempotented algebras: an idempotent $e\in A$ is special if the category $\cat{M}(A)_{e}:= \{M\in \cat{M}(A)\ \vert \ AeM = M\}$ is closed under subquotients, where $\cat{M}(A)$ is the category of non-degenerate $A$-modules. In particular, we generalize some results from \cite{BK} to module categories over such algebras.

 In \S \ref{sid} we introduce and study the notion of a \emph{split idempotent} (see  Definition \ref{defn:split}): this notion isolates idempotents which are well behaved from a certain homological perspective. Derived techniques have become increasingly important in the representation theory of $p$-adic groups, especially in the context of modular representations (see \cite{Ha} for an exposition). Using our results on special and split idempotents, we initiate a preliminary investigation of the derived category of an idempotented algebra. In  \S \ref{s:model_structure} we show that every complex is quasi-isomorphic to both a K-projective and a K-injective one. Using this, in \S \ref{sid}, we obtain the derived versions of the results  in \S \ref{sim}. 
 
 When $e\in A$ is split, $\cat{M}(A)_{e}$ appears as a direct summand of $\cat{M}(A)$ (see Proposition \ref{prop:30}). In \S \ref{ss:decom}, we isolate the conditions on a family of special and split idempotents $\{e_{i}\}$ that ensure a decomposition of the (derived) category of non-degenerate  $A$-modules into a product of the categories $\cat{M}(A)_{e_{i}}$ (see Theorem \ref{prop:896} and Theorem \ref{prop:898}).

As evidenced by the case of Hecke algebras, it can be a difficult problem to classify special and split idempotents for a given idempotented algebra. Path algebras of quivers form a natural class of idempotented algebras. In \S \ref{ss:patha} we achieve a complete classification of these idempotents for such algebras. The proofs of the results in this section are elementary, but of a technical combinatorial nature.  

To obtain our results in \S \ref{s:spc_idm} and \S \ref{ss:patha}, we require some basic material on non-degenerate modules over idempotented algebras. We obtain these results in \S \ref{sect_prelim}. Module categories over idempotented algebras have been studied on several occasions, especially in the context of the representation theory of $p$-adic groups (see \cite{Ber}, \cite{BW}, or \cite{DK}). Many of the results in \S \ref{sect_prelim} are standard for unital rings, and may even be known to experts in the idempotented case. However, as we have not been able to find explicit references we believe it worthwhile to have them formally written down.

\subsection*{Notation and conventions}\label{ss:not_con}

All algebras will be defined over a fixed, unital, commutative base ring $\mathbb{K}$. An unadorned $\otimes$ will mean tensoring over $\mathbb{K}$. The terms `ring', `module', and `complex' used without any prefix will always mean over $\mathbb{K}$. All complexes are indexed cohomologically.

Recall that an associative algebra $A$ is said to be idempotented if given any element $a\in A$, there exists an idempotent $e\in A$ (i.e. $e^{2}=e$) such that $ae=ea=a$, and given any two idempotents $e_{1},e_{2}\in A$, there exists an idempotent $e\in A$ such that $ee_{i}=e_{i}e=e_{i}$ for $i=1,2$. The reader may convince themselves that $A$ is idempotented if and only if  given elements $a_{1},\ldots,a_{n} \in A$, there exists an idempotent $e\in A$ such that $ea_{i} = a_{i}e = a_{i}$ for all $i\in \{1,\ldots,n\}$. In this note, $A$ and $B$ will denote idempotented algebras.  We use $\Ai$ to represent the set of all idempotents in $A$.  A morphism between two idempotented algebras means a morphism between the underlying associative algebras: a morphism of unital algebras viewed as idempotented algebras is not necessarily unital. 

A left module $M$ over  $A$ is called \emph{non-degenerate} if for any given $m\in M$, there is some $e\in \Ai$ such that $em=m$. We use $\cat{M}(A)$ to represent the category of non-degenerate left $A$-modules. The category of complexes of objects of $\cat{M}(A)$ will be denoted by $\cat{C}(A)$, with associated homotopy category $\cat{K}(A)$ and derived category $\cat{D}(A)$. The category of all left $A$-modules is denoted by $\overline{\cat{M}}(A)$:  $\cat{M}(A)$ is a full abelian subcategory of $\overline{\cat{M}}(A)$. The category of complexes of objects of $\overline{\cat{M}}(A)$ is denoted by $\overline{\cat{C}}(A)$, with associated homotopy category $\overline{\cat{K}}(A)$ and derived category $\overline{\cat{D}}(A)$. 

Both $A^{\mathrm{op}}$  and $A\otimes B^{\mathrm{op}}$ are idempotented algebras. Right modules will be dealt with using the formalism of the opposite ring. Left $A$ and right $B$ bimodules are identified with left modules over $A\otimes B^{\mathrm{op}}$; such a bimodule is non-degenerate on both sides if and only if it is non-degenerate as an $A\otimes B^{\mathrm{op}}$-module. 

If $M\in \overline{\cat{C}}(A)$, we use $\operatorname{H}^{i}(M)$ for its $i^{\mathrm{th}}$ cohomology $A$-module, and $\operatorname{Z}^{i}(M)$ for the $A$-module of $i^{\mathrm{th}}$ cocycles. If $M$ and $N$ are objects of $\overline{\cat{M}}(A)$, we use $\mathrm{Hom}_{A}(M,N)$ to denote the module of $A$-linear maps between them. More generally, if $M$ and $N$ are objects of $\overline{\cat{C}}(A)$, we use $\mathrm{Hom}_{A}(M,N)$ for the complex of morphisms between them, where \[\Hom_{A}(M,N)^{k}=\prod_{i\in \mathbb Z}\Hom_{A}(M^{i},N^{i+k})\] and $\partial(f)=\partial_{N}\circ f-(-1)^{k}f\circ\partial_{M}$ for $f\in \Hom_{A}(M,N)^{k}.$ If $M \in \overline{\cat{C}}(A^{\mathrm{op}})$ and $N\in \overline{\cat{C}}(A)$, $M\otimes_{A} N$ is the complex with $k^{\mathrm{th}}$ term \[
(M\otimes_{A} N)^{k}=\bigoplus_{i+j=k}M^{i}\otimes N^{j}
\]
 and $\partial^{k}(m\otimes n)=\partial(m)\otimes n+ (-1)^{i}m \otimes \partial(n)$ where $m\in M^{i},n\in N^{j}$. For morphisms in $\overline{\cat{C}}(A)$ or $\overline{\cat{K}}(A)$, we use $\mathrm{Hom}_{\overline{\cat{C}}(A)}(M,N)$ and $\mathrm{Hom}_{\overline{\cat{K}}(A)}(M,N)$, respectively. Recall that $\mathrm{Hom}_{\overline{\cat{C}}(A)}(M,N) = \operatorname{H}^{0}(\mathrm{Hom}_{A}(M,N))$ and $\mathrm{Hom}_{\overline{\cat{K}}(A)}(M,N) = \operatorname{Z}^{0}(\mathrm{Hom}_{A}(M,N))$.

An object  $P\in \cat{K}(A)$ is called \emph{K-projective} if $\mathrm{Hom}_{A}(P,N)$ is acyclic for all acyclic complexes $N$ in  $\cat{K}(A)$. Similarly, an object $I\in \cat{K}(A)$ is called \emph{K-injective} if $\mathrm{Hom}_{A}(N,I)$ is acyclic for all acyclic complexes $N$ in  $\cat{K}(A)$. There are similar notions of K-projectivity and K-injectivity for $\overline{\cat{K}}(A)$, where we test over all acyclic complexes in $\overline{\cat{K}}(A)$. The  K-projectivity or K-injectivity of a complex \emph{does} depend on whether we are viewing it as an object of $\overline{\cat{K}}(A)$ or $\cat{K}(A)$. See Lemma \ref{cor:201}.

The notation $\mathrm{RHom}_{A}$ is used for the right derived functor of $\mathrm{Hom}_{A}$, and $\otimes_{A}^{\mathrm{L}}$ is used for the left derived functor of $\otimes_{A}$. The construction of $\ot_{A}^{\mathrm{L}}$ and $\mathrm{RHom}_{A}$ is independent of whether we are constructing them in $\cat{D}(A)$ or $\overline{\cat{D}}(A)$ (Lemma \ref{cor:201}).

\subsection*{Acknowledgements} \label{ack}
Both authors would like to thank the Tata Institute of Fundamental Research, Mumbai, for providing an effective environment in which a part of this work was done, and Sandeep Varma for his suggestions.

\section{Preliminaries on module and derived categories}\label{sect_prelim}
\subsection{Module categories of idempotented algebras}\label{s:pre_lem}

\begin{definition}
Define the functor \[\nu_{A}: \overline{\cat{M}}(A) \to \cat{M}(A)\] by \[\nu_{A}(M) := \{m\in M\ \vert \ \exists\  e\in \Ai \ \text{s.t.} \ em = m\}.\]
\end{definition}

Since $A$ is idempotented, $\nu_{A}(M)$ is an $A$-submodule of $M$ for any $M\in \overline{\cat{M}}(A)$. The functor $\nu_{A}$ is right adjoint to the canonical inclusion of $\cat{M}(A)$ in $\overline{\cat{M}}(A)$. 

The following lemma is standard (for example, see \cite[Lemma 0.12]{DK}).

\begin{lemma} \label{lem:101}
The functor $\nu_{A}: \overline{\cat{M}}(A)\to \cat{M}(A)$ is exact, and commutes with arbitrary direct sums.  
\qed
\end{lemma}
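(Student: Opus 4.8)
The plan is to establish the two assertions — exactness and commutation with arbitrary direct sums — by exploiting the explicit description of $\nu_{A}$ together with the idempotented hypothesis on $A$. First I would record the elementary but crucial observation that $\nu_{A}(M) = \bigcup_{e\in\Ai} eM$, and moreover that this union is filtered: given $e_{1},e_{2}\in\Ai$, the idempotented condition furnishes $e\in\Ai$ with $ee_{i}=e_{i}e=e_{i}$, so $e_{1}M + e_{2}M \subseteq eM$. This filtered-union description is the workhorse for both parts.

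For exactness, since $\nu_{A}$ is additive it suffices to show it preserves kernels and cokernels, or equivalently that it takes a short exact sequence $0\to M'\to M\to M''\to 0$ in $\overline{\cat{M}}(A)$ to a short exact sequence. Left-exactness is automatic because $\nu_{A}$ is a right adjoint (to the inclusion $\cat{M}(A)\hookrightarrow\overline{\cat{M}}(A)$), as already noted in the text; alternatively it is immediate from the submodule description. The content is right-exactness: I must check that $\nu_{A}(M)\to\nu_{A}(M'')$ is surjective. Given $m''\in\nu_{A}(M'')$, choose $e\in\Ai$ with $em''=m''$, lift $m''$ to some $m\in M$, and then replace $m$ by $em$; since the surjection $M\to M''$ is $A$-linear, $em$ still maps to $em''=m''$, and $e(em)=em$ shows $em\in\nu_{A}(M)$. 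This proves surjectivity, hence exactness.

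For the commutation with direct sums, let $\{M_{\alpha}\}_{\alpha\in I}$ be a family in $\overline{\cat{M}}(A)$ and consider the canonical map $\bigoplus_{\alpha}\nu_{A}(M_{\alpha})\to\nu_{A}\bigl(\bigoplus_{\alpha}M_{\alpha}\bigr)$, which is clearly injective. For surjectivity, take $x = (x_{\alpha})_{\alpha}\in\nu_{A}\bigl(\bigoplus_{\alpha}M_{\alpha}\bigr)$; only finitely many components $x_{\alpha_{1}},\dots,x_{\alpha_{k}}$ are nonzero, and there is a single $e\in\Ai$ with $ex = x$. Since the $A$-action on the direct sum is componentwise, $ex_{\alpha_{i}} = x_{\alpha_{i}}$ for each $i$, so each $x_{\alpha_{i}}\in\nu_{A}(M_{\alpha_{i}})$ and $x$ lies in the image. (Note one does not even need the idempotented hypothesis here beyond having a single witnessing idempotent, which is built into the definition of $\nu_{A}$ on the direct sum.)

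The only genuinely delicate point is the right-exactness step, and specifically the use of $A$-linearity of the surjection to ensure that the ``truncated lift'' $em$ still projects onto $m''$; everything else is bookkeeping with the filtered-union description. I would present the argument in the order: (1) submodule/filtered-union description of $\nu_{A}$; (2) left-exactness via the adjunction; (3) the lifting argument for right-exactness; (4) the direct-sum computation.
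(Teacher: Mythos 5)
Your proof is correct. The paper states this lemma without proof, deferring to the literature (it cites \cite{DK}, Lemma~0.12, as a reference for this standard fact); your argument — the filtered-union description $\nu_{A}(M)=\bigcup_{e\in\Ai}eM$, left-exactness from the adjunction (or directly from $\nu_{A}(M)\cap M'=\nu_{A}(M')$), the ``truncated lift'' $em$ for right-exactness, and the componentwise action for direct sums — is exactly the standard argument the paper has in mind, and it is complete.
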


\begin{lemma} \label{lem:04}\label{lem:pre_5}
 Let $e\in \Ai$ and $M\in \overline{\cat{M}}(A)$. Then, there are the following maps:
 \begin{enumerate}
 \rmitem{i} The correspondence $f \mapsto f(e)$ establishes an isomorphism of modules \[\mathrm{Hom}_{A}(Ae,M)\cong eM.\]
 \rmitem{ii} For $M=Ae$, the isomorphism in $({\rm i})$ induces an isomorphism of rings \[\mathrm{End}_{A}(Ae)\cong eAe^{\mathrm{op}}.\]
\rmitem{iii} The correspondence $ea\otimes m \mapsto eam$ establishes an isomorphism of modules 
\[eA\otimes_{A} M\cong eM.
\]
 \end{enumerate}
\end{lemma}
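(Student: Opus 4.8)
The three statements are essentially the standard ``hom–tensor'' identities for the idempotent-generated projective $Ae$, adapted to the idempotented setting, and my plan is to prove them by exhibiting explicit mutually inverse maps, checking well-definedness and $A$-linearity by hand. For part (i), I would define $\Phi\colon \mathrm{Hom}_{A}(Ae,M)\to eM$ by $\Phi(f)=f(e)$; this lands in $eM$ because $e=e^{2}$ forces $f(e)=f(e\cdot e)=e\cdot f(e)$. In the other direction, send $m\in eM$ to the map $a e\mapsto a e m = a m$ (well defined since $ae=be$ implies $aem=bem$), which is visibly left $A$-linear. The two constructions are inverse to each other: starting from $f$ and going back gives $ae\mapsto a f(e)=f(ae)$, and starting from $m$ we recover $e\cdot m = m$ since $m\in eM$. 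One should also note the map $\Phi$ is itself $A$-linear for the natural module structure on $\mathrm{Hom}_{A}(Ae,M)$ coming from right multiplication on $Ae$ — though here the relevant module structure is over $eAe$ or, in (ii), this is exactly the point.

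For part (ii), I specialise $M=Ae$ in (i) to get an isomorphism of $\mathbb{K}$-modules $\mathrm{End}_{A}(Ae)\cong eAe$. It remains to check this is a ring anti-isomorphism, i.e.\ an isomorphism onto $eAe^{\op}$. Given $f,g\in\mathrm{End}_{A}(Ae)$, compute $\Phi(f\circ g)=(f\circ g)(e)=f(g(e))=f(e\cdot g(e))=\cdots$; using that $g(e)\in eAe$ we write $g(e)=exe$ and get $f(g(e))=f(exe)=exe\cdot f(e)$ — wait, more carefully $f(exe)=(ex)f(e)$ by left-linearity, and $f(e)=eye$, so this equals $ex\cdot eye=exeye=g(e)f(e)$. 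Hence $\Phi(f\circ g)=\Phi(g)\Phi(f)$, which is precisely the statement that $\Phi$ is a ring isomorphism $\mathrm{End}_{A}(Ae)\xrightarrow{\sim}(eAe)^{\op}$. I would also remark that the unit of $\mathrm{End}_{A}(Ae)$ (the identity map) goes to $e$, the unit of $eAe$.

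For part (iii), I define $\Psi\colon eA\otimes_{A}M\to eM$ on generators by $ea\otimes m\mapsto eam$; this is balanced over $A$ since for $x\in A$ we have $ea x\otimes m$ and $ea\otimes xm$ both mapping to $eaxm$, so it descends to the tensor product, and it is left $eAe$-linear (equivalently, a map of the appropriate modules). For the inverse, I need a well-defined map $eM\to eA\otimes_{A}M$; the natural candidate sends $m\in eM$ to $e\otimes m$. To see this is well defined on $eM$ and inverse to $\Psi$: if $m\in eM$ then $m=em$, so $\Psi(e\otimes m)=em=m$, and conversely for a generator $ea\otimes m\in eA\otimes_{A}M$ we have $ea\otimes m = e\otimes am$ (moving $a$ across the tensor), which is exactly $e\otimes(eam)$ after noting $am=\Psi(ea\otimes m)\in eM$ — here one uses non-degeneracy of $M$ together with the idempotented hypothesis only insofar as needed to make sense of things, but actually $eam\in eM$ automatically. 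So $\Psi$ is bijective.

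The only genuine subtlety — and the step I would flag as the main obstacle — is bookkeeping the module structures correctly: $\mathrm{Hom}_{A}(Ae,M)$ and $eM$ are most naturally modules over $eAe$ (acting on the right factor of $Ae$, resp.\ by left multiplication), and one must be consistent about left versus right and about where the ``op'' enters, which is exactly what makes (ii) an \emph{anti}-isomorphism of rings. None of the verifications require more than repeated use of $e^{2}=e$ and associativity; no appeal to idempotentedness of $A$ beyond what is built into the definition of $Ae$ and $eM$ is needed, since $e$ is a genuine idempotent. I would present the maps, state that the verifications are the routine ones just indicated, and leave the diagram-chasing to the reader, pausing only to spell out the anti-multiplicativity computation in (ii).
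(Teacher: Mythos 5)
Your argument is correct and follows essentially the same route as the paper. For (i) and (ii), the paper simply invokes the unital-case proof (citing Benson, Lemma~1.3.3), noting it carries over verbatim; you have written out exactly that argument, including the anti-multiplicativity computation that makes (ii) an isomorphism onto $(eAe)^{\mathrm{op}}$. For (iii), the paper checks surjectivity and injectivity of $\psi_{e}\colon ea\otimes m\mapsto eam$ directly, with injectivity following from the observation that any element of $eA\otimes_{A}M$ equals $e\otimes m'$ for a single $m'$; your construction of the explicit inverse $m\mapsto e\otimes m$ on $eM$ is just a repackaging of the same observation, since both hinge on pulling $e$ out of a sum of elementary tensors. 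One minor clean-up: in verifying that $e\otimes am = e\otimes eam$ you do not need to mention non-degeneracy of $M$ at all (indeed $M$ is only assumed to be in $\ocat{M}(A)$ here); the identity follows purely from $e=e^{2}$ and the $A$-balancing of the tensor product, as you yourself note at the end of that sentence.
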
 
\begin{proof}
The proofs of (i) and (ii) are identical to those of  the corresponding statements in the unital case - see \cite[Lemma 1.3.3]{Ben95}.

For part (iii) consider the map of modules $\psi_{e}: eA\otimes_{A} M\to eM$ taking $ea\otimes m$ to $eam$. Clearly it is surjective. Note that $\sum_{i=1}^{n}(ea_{i}\otimes x_{i})=e\otimes(\sum_{i=1}^{n}ea_{i}x_{i})$, which implies that the map $\psi_{e}$ is injective as well.
\end{proof}

\begin{lemma}\label{lem:pre_2}
Let $M\in \cat{M}(A)$ and $N\in \overline{\cat{M}}(A)$, and let $\psi:M\to N$. If the induced maps $\operatorname{id}_{eA}\otimes_{A} \psi: eA\otimes_{A}M\to eA\otimes_{A}N$ are injective for all $e\in \Ai$, then the map $\psi$ is injective.
\end{lemma}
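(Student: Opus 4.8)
The plan is to recover $m\in M$ from the data of its images under the various maps $\operatorname{id}_{eA}\otimes_A\psi$, exploiting the fact that $M$ is non-degenerate. Concretely, suppose $m\in M$ satisfies $\psi(m)=0$. Since $M\in\cat{M}(A)$, there is some $e\in\Ai$ with $em=m$. Apply the isomorphism of Lemma \ref{lem:pre_5}(iii): we have $eA\otimes_A M\cong eM$ via $ea\otimes x\mapsto eax$, and likewise $eA\otimes_A N\cong eN$. Under these identifications the map $\operatorname{id}_{eA}\otimes_A\psi$ corresponds to the restriction $eM\to eN$ of $\psi$ (one checks this is compatible: $ea\otimes x\mapsto eax\mapsto \psi(eax)=ea\psi(x)$, matching $ea\otimes\psi(x)\mapsto ea\psi(x)$).

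The key step is then to locate $m$ inside $eM$ and invoke injectivity. Since $em=m$, we have $m\in eM$, and it corresponds under the isomorphism $eA\otimes_A M\cong eM$ to the element $e\otimes m$. Because $\psi(m)=0$, the image of $e\otimes m$ under $\operatorname{id}_{eA}\otimes_A\psi$ is $e\otimes\psi(m)=0$ in $eA\otimes_A N\cong eN$. By hypothesis $\operatorname{id}_{eA}\otimes_A\psi$ is injective, so $e\otimes m=0$ in $eA\otimes_A M$, hence $m=em=0$ in $eM\subseteq M$. This shows $\psi$ is injective.

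I expect the only point requiring care — the "main obstacle", such as it is — to be the verification that $\operatorname{id}_{eA}\otimes_A\psi$ genuinely corresponds to the restriction of $\psi$ under the isomorphisms of Lemma \ref{lem:pre_5}(iii), i.e. the naturality of $\psi_e$ in the module variable. This is routine: the square with horizontal maps $\psi_e^M$, $\psi_e^N$ and vertical maps $\operatorname{id}_{eA}\otimes_A\psi$, $\psi|_{eM}$ commutes because $\psi$ is $A$-linear, so $\psi(eax)=ea\,\psi(x)$. Granting that, the argument is immediate; no further subtlety arises, and in particular we never need $N$ to be non-degenerate.
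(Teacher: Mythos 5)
Your proof is correct and takes the same route as the paper's: both identify $eA\otimes_A M$ with $eM$ via Lemma \ref{lem:pre_5}(iii) and then use that $M = \bigcup_{e\in \Ai} eM$. The paper states this in one line, while you have simply spelled out the routine diagram-chase; there is no substantive difference.
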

\begin{proof}
The result follows from Lemma \ref{lem:pre_5}, as $M = \bigcup_{e\in \Ai} eM$.
\end{proof}

\begin{lemma} \label{lem:01}
Let $M\in \overline{\cat{M}(}A)$ and $N\in \cat{M}(A)$, and let $\psi:M\to N$. If the induced maps  $\mathrm{Hom}_{A}(\operatorname{id}_{Ae}, \psi): \mathrm{Hom}_{A}(Ae,M)\to \mathrm{Hom}_{A}(Ae,N)$ are surjective for all $e\in \Ai$, then the map $\psi$ is surjective.
\end{lemma}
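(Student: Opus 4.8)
The plan is to use the identification from Lemma~\ref{lem:pre_5}(i) to reduce the surjectivity of $\psi$ to a statement about the modules $eN$, and then to exploit non-degeneracy of $N$ together with idempotency of $A$. Under the isomorphism $\mathrm{Hom}_{A}(Ae,M)\cong eM$ of Lemma~\ref{lem:pre_5}(i), natural in $M$, the map $\mathrm{Hom}_{A}(\operatorname{id}_{Ae},\psi)$ corresponds to the restriction $\psi|_{eM}\colon eM\to eN$. So the hypothesis says precisely that $\psi(eM)=eN$ for every $e\in\Ai$. Since $N$ is non-degenerate, $N=\bigcup_{e\in\Ai}eN$, and hence $N=\bigcup_{e\in\Ai}\psi(eM)\subseteq\psi(M)$, giving surjectivity immediately.

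First I would make the naturality square explicit: for $f\in\mathrm{Hom}_{A}(Ae,M)$, the isomorphism sends $f\mapsto f(e)\in eM$; postcomposing with $\psi$ sends $f\mapsto\psi(f(e))=(\psi\circ f)(e)$, which is the image of $\psi\circ f\in\mathrm{Hom}_{A}(Ae,N)$ under the corresponding isomorphism. Thus the square commutes, and surjectivity of $\mathrm{Hom}_{A}(\operatorname{id}_{Ae},\psi)$ is equivalent to surjectivity of $eM\to eN$. Next I would invoke non-degeneracy of $N$: given $n\in N$, pick $e\in\Ai$ with $en=n$, so $n\in eN$; by hypothesis there is $m\in eM$ (in particular $m\in M$) with $\psi(m)=n$. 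This shows $\psi$ is surjective.

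I do not expect any real obstacle here; the only point requiring a small amount of care is verifying that the isomorphism of Lemma~\ref{lem:pre_5}(i) is natural in the module variable, so that the identification $\mathrm{Hom}_{A}(\operatorname{id}_{Ae},\psi)\leftrightarrow\psi|_{eM}$ is legitimate. This is a routine check on the explicit formula $f\mapsto f(e)$. Everything else is formal.
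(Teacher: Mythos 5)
Your proof is correct and follows essentially the same route as the paper: identify $\mathrm{Hom}_{A}(Ae,M)\cong eM$ via Lemma~\ref{lem:pre_5}(i), translate the hypothesis into $\psi(eM)=eN$ for all $e\in\Ai$, and finish using $N=\bigcup_{e\in\Ai}eN$. The only difference is that you spell out the naturality check, which the paper leaves implicit.
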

\begin{proof}
By the hypothesis and Lemma \ref{lem:04} the restriction of $f$ to $eM$ is a surjective map from $eM$ to $eN$ for each $e\in \Ai$. As $N = \bigcup_{e\in \Ai} eN$, the result follows.
\end{proof}

\begin{lemma}\label{lem:pre_6}
Let $e\in \Ai$ and $M\in \overline{\cat{M}}(A)$. Then, there is a natural isomorphism of modules: \[eA\otimes_{A} \nu_{A}(M)\cong eA\otimes_{A} M.\]
\end{lemma}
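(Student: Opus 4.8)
The plan is to use the isomorphism $eA \otimes_A M \cong eM$ from Lemma~\ref{lem:pre_5}(iii), which is natural in $M$, together with the inclusion $\iota_M : \nu_A(M) \hookrightarrow M$. Applying the functor $eA \otimes_A (-)$ to $\iota_M$ gives a map $\operatorname{id}_{eA} \otimes_A \iota_M : eA \otimes_A \nu_A(M) \to eA \otimes_A M$, and the claim is that this map is an isomorphism. Under the natural identifications of Lemma~\ref{lem:pre_5}(iii), this map is identified with the inclusion $e\nu_A(M) \hookrightarrow eM$, so it suffices to show that $e\nu_A(M) = eM$ as submodules of $M$.

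The key observation is that for any $m \in M$, we have $em \in \nu_A(M)$: indeed, since $A$ is idempotented there is an idempotent $e' \in \Ai$ with $e'e = e$, hence $e'(em) = (e'e)m = em$, so $em \in \nu_A(M)$ by definition of $\nu_A$. Therefore $em = e(em) \in e\nu_A(M)$, which gives $eM \subseteq e\nu_A(M)$. The reverse inclusion $e\nu_A(M) \subseteq eM$ is immediate since $\nu_A(M) \subseteq M$. Thus $e\nu_A(M) = eM$, and so $\operatorname{id}_{eA} \otimes_A \iota_M$ is the identity map on $eM$ under these identifications, in particular an isomorphism.

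For naturality, I would note that both sides are functors in $M$: a morphism $\phi: M \to M'$ in $\overline{\cat{M}}(A)$ restricts to $\nu_A(\phi): \nu_A(M) \to \nu_A(M')$, and the square relating $\operatorname{id}_{eA} \otimes_A \iota_M$ and $\operatorname{id}_{eA} \otimes_A \iota_{M'}$ commutes because $\iota$ is a natural transformation; the identification with $eM \hookrightarrow eM'$ is natural by the naturality statement in Lemma~\ref{lem:pre_5}(iii). So the isomorphism $eA \otimes_A \nu_A(M) \cong eA \otimes_A M$ is natural in $M$.

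I do not anticipate any serious obstacle here: the only point requiring a small argument is the set-theoretic identity $e\nu_A(M) = eM$, and the nontrivial direction uses the defining property of an idempotented algebra (existence of an idempotent $e'$ absorbing $e$). Everything else is formal bookkeeping with the naturality of the tensor-hom identifications already established. One could even phrase the whole proof as: $eA \otimes_A \nu_A(M) \cong e\nu_A(M) = eM \cong eA \otimes_A M$, with the middle equality being the content.
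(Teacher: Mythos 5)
Your proof is correct and follows essentially the same route as the paper: reduce via Lemma~\ref{lem:pre_5}(iii) to the set-theoretic identity $e\nu_A(M) = eM$, and verify it by noting $eM \subseteq \nu_A(M)$. One small simplification: since $e$ is itself an idempotent, $e(em) = em$ already shows $em \in \nu_A(M)$, so there is no need to invoke the idempotented-algebra axiom to produce an absorbing $e'$.
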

\begin{proof}
By Lemma \ref{lem:pre_5}, the canonical map $eA\otimes_{A} \nu_{A}(M)\to eA\otimes_{A} M$ corresponds to the inclusion $e\nu_{A}(M)\to eM$. Since $eM$ is a subset of  $\nu_{A}(M)$, $e\nu_{A}(M) = eM$.
\end{proof}

\begin{lemma}\label{lem:pre_4}
Let $L\in \overline{\cat{M}}(A\otimes B^{\mathrm{op}})$ and $N\in \cat{M}(B)$. Then, there is a natural isomorphism in $\cat{M}(A)$:
\[
\nu_{A}(L)\otimes_{B} N\cong \nu_{A}(L \otimes_{B} N).
\]
\end{lemma}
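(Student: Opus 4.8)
The plan is to write down the obvious candidate map and check it is the claimed isomorphism. First observe that $\nu_{A}(L)$, being an $A$-submodule of $L$, is automatically a sub-bimodule: if $e\in\Ai$ satisfies $em=m$ then $e(mb)=(em)b=mb$ for every $b\in B$, so $\nu_{A}(L)\cdot B\subseteq\nu_{A}(L)$. Hence $\nu_{A}(L)\otimes_{B}N$ is a well-defined left $A$-module, and the inclusion $\nu_{A}(L)\hookrightarrow L$ induces a natural $A$-linear map $\alpha\colon\nu_{A}(L)\otimes_{B}N\to L\otimes_{B}N$. I would show that $\alpha$ factors through the submodule $\nu_{A}(L\otimes_{B}N)$ and restricts to an isomorphism onto it; since $\nu_{A}(L\otimes_{B}N)\in\cat{M}(A)$ by construction and $\alpha$ is visibly natural in both $L$ and $N$, this gives the lemma.

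For the factorization and surjectivity: given $m\in\nu_{A}(L)$, choose $e\in\Ai$ with $em=m$; then $e\cdot(m\otimes n)=m\otimes n$, so every simple tensor in the image of $\alpha$ lies in $\nu_{A}(L\otimes_{B}N)$, hence so does $\operatorname{im}\alpha$. Conversely, given $\xi\in\nu_{A}(L\otimes_{B}N)$, pick $e\in\Ai$ with $e\xi=\xi$ and write $\xi=\sum_{i}m_{i}\otimes n_{i}$ as a finite sum; then $\xi=e\xi=\sum_{i}(em_{i})\otimes n_{i}$ with each $em_{i}\in eL\subseteq\nu_{A}(L)$, so $\xi\in\operatorname{im}\alpha$.

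The main obstacle is injectivity, and the key point is that left multiplication by an idempotent splits off a summand. Let $\sum_{i}m_{i}\otimes n_{i}\in\ker\alpha$ with all $m_{i}\in\nu_{A}(L)$; combining the definition of $\nu_{A}(L)$ with the idempotented property of $A$, choose a single $e\in\Ai$ with $em_{i}=m_{i}$ for every $i$. The map $p\colon L\to L$, $p(x)=ex$, is right $B$-linear (the two actions commute) and idempotent, so $L=eL\oplus\ker p$ in $\overline{\cat{M}}(B^{\op})$; since $p$ carries $\nu_{A}(L)$ into itself (in fact into $eL$), the same reasoning gives $\nu_{A}(L)=eL\oplus(\ker p\cap\nu_{A}(L))$ in $\overline{\cat{M}}(B^{\op})$. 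Applying the additive functor $-\otimes_{B}N$ shows that both $eL\otimes_{B}N\to L\otimes_{B}N$ and $eL\otimes_{B}N\to\nu_{A}(L)\otimes_{B}N$ are split injections. Our element, all of whose terms now lie in $eL$, arises from an element of $eL\otimes_{B}N$ that maps to $0$ in $L\otimes_{B}N$; hence it is $0$ in $eL\otimes_{B}N$, and therefore $0$ in $\nu_{A}(L)\otimes_{B}N$. Thus $\ker\alpha=0$.

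The only points needing care are the naturality of $\alpha$ (routine) and the fact that $-\otimes_{B}N$ preserves the relevant direct-sum decompositions (immediate from additivity). As an alternative to the splitting argument, one could instead identify $\nu_{A}(L\otimes_{B}N)=\varinjlim_{e\in\Ai}e(L\otimes_{B}N)$, use Lemma~\ref{lem:pre_5}(iii) and associativity of $\otimes$ to obtain natural isomorphisms $e(L\otimes_{B}N)\cong(eA\otimes_{A}L)\otimes_{B}N\cong eL\otimes_{B}N$, and pass to the colimit, using that $-\otimes_{B}N$ commutes with filtered colimits and $\varinjlim_{e}eL=\nu_{A}(L)$; the point to check there is compatibility of these isomorphisms with the transition maps $eA\hookrightarrow e'A$.
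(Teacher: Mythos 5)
Your proposal is correct, and the first three parts (the natural map, that its image lies in $\nu_{A}(L\otimes_{B}N)$, and surjectivity) are word-for-word the paper's own argument. The only real divergence is in the injectivity step. The paper tests injectivity ``locally'' by applying $eA\otimes_{A}(-)$ to $\psi$, using Lemma~\ref{lem:pre_6} to identify both sides with $eA\otimes_{A}L\otimes_{B}N$ so that $\operatorname{id}_{eA}\otimes_{A}\psi$ is an isomorphism, and then invoking Lemma~\ref{lem:pre_2} to deduce injectivity of $\psi$. You instead build the splitting by hand: multiplication by $e$ is an idempotent right $B$-linear map, so $eL$ is a direct summand of both $L$ and $\nu_{A}(L)$ in $\overline{\cat{M}}(B^{\op})$, and applying the additive functor $(-)\otimes_{B}N$ makes $eL\otimes_{B}N\to L\otimes_{B}N$ a split monomorphism through which the offending element factors. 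These are two phrasings of the same underlying observation (namely that everything can be controlled after cutting down to $eL\cong eA\otimes_{A}L$), but yours is more self-contained, proving a special case of Lemma~\ref{lem:pre_2} inline rather than citing it, while the paper's version is shorter given that Lemmas~\ref{lem:pre_2} and \ref{lem:pre_6} are already on hand. Your sketched alternative via $\varinjlim_{e}eL=\nu_{A}(L)$ and commutation of $(-)\otimes_{B}N$ with filtered colimits is also fine, and is in spirit closest to the paper's reduction-to-$eA$ strategy.
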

\begin{proof}
Let $\psi:\nu_{A}(L)\otimes_{B} N\to L \otimes_{B} N$ be the natural $A$-linear map given by $\psi(l\otimes n)=l\otimes n$ for $l\in L, n\in N$. 

The image of $\psi$ is contained in $\nu_{A}(L \otimes_{B} N)$. Indeed, if $x=\sum_{i=1}^{n}l_{i}\otimes n_{i}$  is an arbitrary element of $\nu_{A}(L)\otimes_{B} N$, then there exists an $e\in \Ai$ such that $el_{i}=l_{i}e=l_{i}$ for every $i$, and therefore that $ex=x$.

Next we show that $\psi$ is surjective. Let $x=\sum_{i=1}^{n}l_{i}\otimes n_{i}$  be an arbitrary element of $\nu_{A}(L\otimes_{B} N)$. Let $e\in \Ai$ be such that $ex=x$. Note that $el_{i}\in \nu_{A}(L)$ for all $i$ and $\psi(\sum_{i=1}^{n}el_{i}\otimes n_{i})=x$; this proves the surjectivity of $\psi$.

Finally we show that $\psi$ is injective. Let $e\in \Ai$. Consider the induced map 
\[
\psi_{e}: eA\otimes_{A}\nu_{A}(L) \otimes_{B} N \to eA \otimes_{A}\nu_{A}(L\otimes_{B} N).
\]
By Lemma \ref{lem:pre_6} the map $\psi_{e}$ is an isomorphism. The injectivity of $\psi$ now follows from Lemma \ref{lem:pre_2}.
\end{proof}

\begin{lemma}\label{lem:pre_1}
Let $\phi:A\to B$ be a morphism of algebras, and let $M\in \cat{M}(A)$. For $e_{1}, e_{2}\in\Ai$ such that $e_{i} m=m$ for $i=1,2$, $\phi(e_{1})\otimes m=\phi(e_{2})\otimes m$ in $B\otimes_{A} M$.
\end{lemma}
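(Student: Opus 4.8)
The plan is to reduce both $\phi(e_1)\otimes m$ and $\phi(e_2)\otimes m$ to a common expression of the form $\phi(e)\otimes m$, where $e\in\Ai$ is an idempotent dominating both $e_1$ and $e_2$. Throughout one regards $B$ as a right $A$-module via $\phi$, so that inside $B\otimes_A M$ one has the identity $b\otimes am=b\phi(a)\otimes m$ for $a\in A$, $b\in B$ and $m\in M$; keeping track of this pulled-back action is the only thing to be careful about.

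First I would invoke the idempotented property of $A$ to produce an idempotent $e\in\Ai$ with $ee_i=e_ie=e_i$ for $i=1,2$. Fixing such an $e$, the computation is then immediate: for each $i\in\{1,2\}$,
\[
\phi(e)\otimes m=\phi(e)\otimes e_i m=\phi(e)\phi(e_i)\otimes m=\phi(ee_i)\otimes m=\phi(e_i)\otimes m,
\]
where the first equality uses $e_i m=m$, the second moves $e_i$ across the tensor via the right $A$-module structure on $B$, the third uses that $\phi$ is a ring homomorphism, and the last uses $ee_i=e_i$. Comparing the cases $i=1$ and $i=2$ gives $\phi(e_1)\otimes m=\phi(e)\otimes m=\phi(e_2)\otimes m$, which is the assertion.

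There is no genuine obstacle here; the argument is a one-line manipulation once the dominating idempotent is in hand. The two points worth flagging are that the relevant right $A$-action on $B$ is the one obtained by restriction of scalars along $\phi$ (so the argument does not require $\phi$ to be unital), and that the hypothesis $M\in\cat{M}(A)$ is not actually used beyond the explicitly assumed relations $e_i m=m$, since the existence of $e$ is purely a property of the algebra $A$.
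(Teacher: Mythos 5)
Your proof is correct, and it differs mildly but genuinely from the paper's. The paper produces a dominating idempotent $e\in\Bi$ with $e\phi(e_i)=\phi(e_i)e=\phi(e_i)$ and passes through the intermediary $e\otimes e_1 m = e\otimes e_2 m$ inside $B\otimes_A M$; you instead produce a dominating idempotent $e\in\Ai$ with $ee_i=e_ie=e_i$ and show that both sides equal $\phi(e)\otimes m$ by pushing $e$ forward along $\phi$. The two routes are essentially mirror images through the tensor relation $b\phi(a)\otimes m = b\otimes am$, and either one is a one-line manipulation. One small thing your version buys: it only invokes the idempotented property of $A$, whereas the paper's proof invokes it for $B$ (of course, in the paper's standing conventions both $A$ and $B$ are idempotented, so this is cosmetic). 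Your closing remark is also accurate: the full hypothesis $M\in\cat{M}(A)$ is not needed beyond the assumed relations $e_i m=m$, and this is equally true of the paper's argument.
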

\begin{proof}
There exists an $e\in \Bi$ such that $e\phi(e_{i})=\phi(e_{i})e=\phi(e_{i})$ for $i=1,2$. We have
$ \phi(e_{1})\otimes m=e\phi(e_{1})\otimes m= e\otimes e_{1}m=e\otimes e_{2}m=\phi(e_{2})\otimes m.$
\end{proof}

\begin{lemma}\label{lem:pre_0}
Let $\phi:A\to B$ be a morphism of algebras, and let $M\in \cat{M}(A)$. Given $m\in M$ and $e\in \Ai$ such that $e m=m$, define the map $\iota_{M}: M\to B\otimes_{A} M$ by $\iota_{M}(m)=\phi(e)\otimes m$. Then, the map $\iota_{M}$ is independent of the choice of $e$, and is $A$-linear.
\end{lemma}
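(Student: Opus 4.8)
The plan is to verify the two assertions — well-definedness and $A$-linearity of $\iota_M$ — in that order, both as direct consequences of Lemma \ref{lem:pre_1}. First I would check that $\iota_M$ is well defined: if $e_1, e_2 \in \Ai$ both satisfy $e_i m = m$, then Lemma \ref{lem:pre_1} (applied with these two idempotents) gives $\phi(e_1)\otimes m = \phi(e_2)\otimes m$ in $B\otimes_A M$, so the value $\phi(e)\otimes m$ does not depend on the choice of $e$. Since $M$ is non-degenerate, for every $m$ there is at least one such $e$, so $\iota_M$ is defined on all of $M$.

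Next I would check additivity: given $m, m' \in M$, choose (using the idempotented hypothesis on $A$, in the form recalled in the notation section) a single $e\in\Ai$ with $em = m$, $em' = m'$, and hence $e(m+m') = m+m'$. Then $\iota_M(m+m') = \phi(e)\otimes(m+m') = \phi(e)\otimes m + \phi(e)\otimes m' = \iota_M(m) + \iota_M(m')$, where the outer equalities use that this particular $e$ is a legitimate choice for each of $m$, $m'$, and $m+m'$ by well-definedness. Compatibility with the $\mathbb{K}$-action is immediate from the same computation with $m' = 0$ and scalars pulled through the tensor. For $A$-linearity, take $a\in A$ and $m\in M$ with $em = m$; choose $e'\in\Ai$ with $e'a = a$ and $e'e = e$ (again using the idempotented property, applied to the finite set $\{a, e\}$). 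Then $am = ae'em = (ae')m$ has $e'(am) = e'ae'm = ae'm = am$ — more directly, $e'$ fixes $am$ since $e'a = a$ — so $\iota_M(am) = \phi(e')\otimes am$. On the other hand $a\cdot\iota_M(m) = a\cdot(\phi(e)\otimes m) = \phi(a)\phi(e)\otimes m = \phi(ae)\otimes m = \phi(ae'e)\otimes m = \phi(a)\phi(e')\otimes em = \phi(a)\otimes e'em$ (moving $\phi(e')$ across the tensor) $= \phi(a)\otimes am$; and using $e'a = a$ once more, $\phi(a)\otimes am = \phi(e')\phi(a)\otimes am = \phi(e')\otimes \phi(a)am$... here I would instead argue more cleanly: $a\cdot(\phi(e)\otimes m) = \phi(a)\phi(e)\otimes m = \phi(ae)\otimes m$, and since $ae$ fixes nothing canonically it is better to write $\phi(ae)\otimes m = \phi(e'ae)\otimes m = \phi(e')\phi(ae)\otimes m = \phi(e')\otimes aem = \phi(e')\otimes am = \iota_M(am)$, using $e'(ae) = ae$ to pull $\phi(e')$ out and then absorbing $ae$ into $m$ via $aem = am$.

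The only genuine subtlety — and the step I would present most carefully — is the bookkeeping of idempotents in the $A$-linearity computation: one must produce an idempotent $e'$ that simultaneously fixes $a$ (on the appropriate side) and is compatible with the already-chosen $e$, so that the same $e'$ can legitimately be used to compute $\iota_M(am)$ while also being available to slide across the tensor product in $a\cdot\iota_M(m)$. This is exactly what the ``finite set'' form of the idempotented hypothesis, recalled in the notation and conventions, provides; everything else is formal manipulation of elementary tensors. I would therefore structure the proof as: (1) well-definedness from Lemma \ref{lem:pre_1}; (2) additivity and $\mathbb{K}$-linearity via a common idempotent; (3) $A$-linearity via a common idempotent fixing both $a$ and $e$, sliding $\phi(e')$ across $\otimes_A$ and absorbing the remaining factor into the right-hand tensorand.
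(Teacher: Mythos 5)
Your proof is correct and takes exactly the same route as the paper: well-definedness is deduced from Lemma \ref{lem:pre_1}, and $A$-linearity is then a direct verification. The paper dismisses the latter as ``straightforward to show''; your careful bookkeeping --- choosing a single idempotent $e'$ fixing both $a$ and $e$, using $e'(am)=am$ to compute $\iota_M(am)$, and sliding $\phi(e')$ across $\otimes_A$ while absorbing $ae$ into $m$ --- is precisely the detail the paper leaves implicit.
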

\begin{proof}
By Lemma \ref{lem:pre_1}, $\iota_{M}$ is independent of our choice of $e$. The fact that $\iota_{M}$ is $A$-linear is straightforward to show.
\end{proof}

The following lemma appears in \cite[Lemma XII.0.3]{BW}; we give a proof for the sake of completeness.

\begin{lemma}\label{lem:pre_3}
Let $M\in \overline{\cat{M}}(A)$. Then, the following are equivalent:
\begin{enumerate}
\rmitem{a.} $M\in \cat{M}(A)$.
\rmitem{b.} $A\otimes_{A}M \cong M$.
\rmitem{c.}$\nu_{A}(\Hom_{A}(A,M))\cong M$. 
\end{enumerate}
\end{lemma}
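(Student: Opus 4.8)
The plan is to prove the two equivalences (a)$\Leftrightarrow$(b) and (a)$\Leftrightarrow$(c) separately; in each case one direction is essentially formal and the other requires writing down an explicit comparison map. For (b)$\Rightarrow$(a): the module $A\otimes_{A}M$ is automatically non-degenerate, since any element $\sum_{i}a_{i}\otimes m_{i}$ is fixed by any $e\in\Ai$ with $ea_{i}=a_{i}$ for all $i$; and for (c)$\Rightarrow$(a): $\nu_{A}(N)$ lies in $\cat{M}(A)$ by construction for every $N\in\overline{\cat{M}}(A)$. Since non-degeneracy is an intrinsic property preserved under isomorphism in $\overline{\cat{M}}(A)$, either of the hypotheses (b) or (c) forces $M\in\cat{M}(A)$.

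For (a)$\Rightarrow$(b) I would use the multiplication map $\mu_{M}\colon A\otimes_{A}M\to M$, $a\otimes m\mapsto am$. It is surjective because $M$ is non-degenerate (every $m$ equals $em$ for some $e\in\Ai$), and it is injective by the same maneuver as in the proof of Lemma \ref{lem:pre_5}(iii): given $\xi=\sum_{i}a_{i}\otimes m_{i}$ with $\sum_{i}a_{i}m_{i}=0$, choose $e\in\Ai$ with $ea_{i}=a_{i}$ for all $i$ and compute $\xi=\sum_{i}(ea_{i})\otimes m_{i}=e\otimes\bigl(\sum_{i}a_{i}m_{i}\bigr)=0$ in $A\otimes_{A}M$.

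For (a)$\Rightarrow$(c), recall that $\Hom_{A}(A,M)$ is a left $A$-module via the right regular action of $A$ on itself, $(a\cdot f)(x)=f(xa)$. I would define $\theta_{M}\colon M\to\Hom_{A}(A,M)$ by $\theta_{M}(m)(a)=am$; this is $A$-linear, and when $em=m$ one has $e\cdot\theta_{M}(m)=\theta_{M}(m)$, so $\theta_{M}$ factors through $\nu_{A}(\Hom_{A}(A,M))$. It is injective because $\theta_{M}(m)=0$ gives $m=em=0$. For surjectivity, given $g\in\nu_{A}(\Hom_{A}(A,M))$ pick $e\in\Ai$ with $e\cdot g=g$, i.e. $g(ae)=g(a)$ for all $a\in A$, and set $m:=g(e)$; then $\theta_{M}(m)(a)=a\,g(e)=g(ae)=g(a)$, using $A$-linearity of $g$ and then the relation $e\cdot g=g$, so $g=\theta_{M}(m)$. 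This $\theta_{M}$ is the desired isomorphism, and (c)$\Rightarrow$(a) was already handled above.

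There is no serious obstacle here — everything reduces to bookkeeping with idempotents — but the steps demanding care are the injectivity of $\mu_{M}$, which genuinely uses the defining property of an idempotented algebra (there is no global unit available), and the surjectivity step in (a)$\Rightarrow$(c), where one must correctly recognize $g(e)$ as the preimage of $g$ and keep straight which side the $A$-action on $\Hom_{A}(A,M)$ comes from.
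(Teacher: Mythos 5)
Your proof is correct and amounts to the same strategy as the paper's; the only cosmetic difference is the direction in which you build the comparison maps. The paper sends $M\to A\otimes_{A}M$ via $m\mapsto e\otimes m$ (with injectivity deduced from Lemmas \ref{lem:pre_5} and \ref{lem:pre_2}) and $\nu_{A}(\Hom_{A}(A,M))\to M$ via evaluation $f\mapsto f(e_{f})$, whereas you write down the respective inverse maps $\mu_{M}$ and $\theta_{M}$ directly; the paper's map $f_{m}$ used to prove surjectivity of $T$ is exactly your $\theta_{M}(m)$, so the two proofs are mirror images of one another. Your direct computation of injectivity of $\mu_{M}$, using $\sum_{i}a_{i}\otimes m_{i}=e\otimes\sum_{i}a_{i}m_{i}$, is a self-contained substitute for the paper's appeal to the earlier lemmas, and your bookkeeping of the right-regular $A$-action on $\Hom_{A}(A,M)$ (checking $e\cdot\theta_{M}(m)=\theta_{M}(m)$ and $a\,g(e)=g(ae)=g(a)$) is exactly the point requiring care, handled correctly.
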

\begin{proof}
First suppose that $M\in \cat{M}(A)$. 

We begin by showing (b.). Consider the map $\iota=\iota_{M}$ defined in Lemma \ref{lem:pre_0} for $A=B$ and $\phi$ the identity map. By the aforementioned lemma the map is a well defined $A$-linear map. The injectivity of the map $\iota$ follows from the Lemmas \ref{lem:pre_5} and \ref{lem:pre_2}. Let $a\otimes m\in A\otimes_{A}M$, and let $e\in \Ai$ be such that $ea=ae=a$. Then $\iota(am)=e\otimes am=a\otimes m$.

Next we prove (c.). Let $f\in \nu_{A}(\Hom_{A}(A,M))$. There exists an $e_{f}\in \Ai$ such that $e_{f}f=f$. Define $T:\nu_{A}(\Hom_{A}(A,M))\to M$ by $T(f)=f(e_{f})$. It is straightforward to check that $T$ is a well defined $A$-linear map and is injective. We now show that $T$ is surjective. Given $m\in M$, define $f_{m}\in \Hom_{A}(A,M)$ by $f_{m}(a):=am$. If $e\in \Ai$ is such that $em=m$, then clearly $ef_{m}=f_{m}$ and thus $f_{m}\in \nu_{A}(\Hom_{A}(A,M))$. Now $T(f_{m})=f_{m}(e)=em=m$.

If either (b.) or (c.) holds, it is obvious that $M\in \cat{M}(A)$. 
\end{proof}

\begin{prop} \label{prop:adm_1}
Let $M\in \cat{M}(A)$, $N\in \cat{M}(B)$ and $L\in \overline{\cat{M}}(B\ot A^{\op})$. Then, there is a natural isomorphism of modules:
\[
\Hom_{B}(\nu_{B}(L)\otimes_{A} M,N)\cong \Hom_{A}\big(M, \nu_{A}\big(\Hom_{B}(\nu_{B}(L),N)\big)\big).
\]
\end{prop}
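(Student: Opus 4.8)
The plan is to establish the isomorphism by a standard tensor-hom adjunction argument, but carefully tracking the non-degeneracy conditions via the functor $\nu$. The key observation is that the ordinary adjunction between $\otimes_A$ and $\Hom_A$ (or rather $\Hom_B$) almost gives what we want, and the functors $\nu_A$, $\nu_B$ are there to correct for degeneracy.

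First I would note that since $M \in \cat{M}(A)$ and $N \in \cat{M}(B)$, by Lemma \ref{lem:pre_3} we may freely use $A \otimes_A M \cong M$ and the analogous statement for $N$. Next, I would apply the classical adjunction isomorphism for the honest bimodule $\nu_B(L)$ (which lies in $\overline{\cat{M}}(B \otimes A^{\op})$): for any $\mathbb{K}$-modules this gives
\[
\Hom_B(\nu_B(L) \otimes_A M, N) \cong \Hom_A\big(M, \Hom_B(\nu_B(L), N)\big),
\]
where the right-hand side is taken with $\Hom_B(\nu_B(L),N)$ carrying its natural left $A$-module structure coming from the right $A$-action on $\nu_B(L)$. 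The correspondence sends $g$ to $m \mapsto (l \mapsto g(l \otimes m))$. This isomorphism is natural and $\mathbb{K}$-linear; I would check it is in fact $A$-linear (this is the routine part of the verification and follows from chasing definitions). The left-hand side is already an object of $\cat{M}(A)$ since $\nu_B(L)\otimes_A M \in \cat M(B)$ by Lemma \ref{lem:pre_4} applied over $B$, so $\Hom_B$ out of a non-degenerate $B$-module into $N$ need not itself be non-degenerate as an $A$-module — but $\Hom_A(M,-)$ of anything, evaluated against a non-degenerate $M$, lands inside the non-degenerate part.

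The final step is to insert $\nu_A$ on the inside of the right-hand side without changing the Hom-group: I would invoke the adjunction property of $\nu_A$ (it is right adjoint to the inclusion $\cat{M}(A) \hookrightarrow \overline{\cat{M}}(A)$, as noted after the definition of $\nu_A$), which gives
\[
\Hom_A\big(M, \nu_A(\Hom_B(\nu_B(L),N))\big) \cong \Hom_A\big(M, \Hom_B(\nu_B(L),N)\big)
\]
since $M \in \cat{M}(A)$. Composing the two displayed isomorphisms yields the claim, and naturality in all three variables follows from the naturality of each constituent isomorphism.

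I expect the main obstacle to be bookkeeping rather than anything deep: one must be scrupulous about which $A$-module structure is being used on $\Hom_B(\nu_B(L),N)$ (the one induced from $\nu_B(L)$ being a right $A$-module, equivalently a left $A^{\op}$-module), and one must confirm that the classical adjunction map genuinely respects this structure and is compatible with passing to $\nu_A$. A secondary point requiring a line of justification is that $\nu_B(L) \otimes_A M$ really does lie in $\cat{M}(B)$ so that the left-hand $\Hom_B$ is the correct object; this is exactly Lemma \ref{lem:pre_4}. No genuine homological input beyond Lemma \ref{lem:pre_3} and the adjointness of $\nu_A$ is needed.
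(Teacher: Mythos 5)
Your proof is correct and is essentially the same argument as the paper's: the paper simply writes down the explicit currying maps $f \mapsto \hat f$ and $g \mapsto \hat g$ in one step, and the observation that $\hat f$ lands in $\nu_A(\cdot)$ because $M$ is non-degenerate is exactly your invocation of the $\nu_A$-adjunction. Factoring the argument through the classical tensor-hom adjunction followed by the inclusion-$\nu_A$ adjunction is a slightly more modular description of the same underlying map; the only thing to be careful of (which you gesture at but do not spell out) is that the tensor-hom currying argument uses no unit elements and so does go through verbatim for non-unital $A$ and $B$.
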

\begin{proof}
Given a $B$-linear map $f:\nu_{B}(L)\otimes_{A} M\to N$, define 
\[
\hat{f}\in  \Hom_{A}\big(M, \nu_{A}\big(\Hom_{B}(\nu_{B}(L),N)\big)\big)
\]
by $\hat{f}(m)(l)=f(l\otimes m)$, for $l\in L$ and $ m\in M$. Since $M\in \cat{M}(A)$, the image of $\hat{f}$ lies in $ \nu_{A}\big(\Hom_{B}(\nu_{B}(L),N)\big)$. On the other hand given an $A$-linear map $g:M \to \nu_{A}\big(\Hom_{\cat{M}(B)}(\nu_{B}(L),N)\big)$, define $\hat{g}\in \Hom_{\cat{M}(B)}(\nu_{B}(L)\otimes_{A} M,N)$ by $\hat{g}(l\otimes m)=g(m)(l)$, for $l\in L$ and $ m\in M$. It is easy to see that the correspondences $f\mapsto \hat{f}$ and $g\mapsto \hat{g}$ are inverse to each other.
\end{proof}

Analogous to the case of module categories over unital rings, we now obtain the following adjunction relations from Proposition \ref{prop:adm_1}.
\begin{corollary}\label{corr:adm_1}
Let $\phi:A\to B$ be a morphism of algebras. Let $M\in \cat{M}(A)$ and $N\in \cat{M}(B)$. Then, there are the following natural isomorphisms of modules:
\begin{enumerate}
\rmitem{i} $\Hom_{A}(\nu_{A}(N),M)\cong\Hom_{B}(N,\nu_{B}(\Hom_{A}(\nu_{A}(B),M)))$.
\rmitem{ii} $\Hom_{B}(B\otimes_{A} M,N)\cong\Hom_{A}(M,\nu_{A}(N))$.
\end{enumerate}
\end{corollary}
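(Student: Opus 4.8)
The plan is to obtain both statements by specializing Proposition \ref{prop:adm_1} to the case $L = B$, equipped with the two bimodule structures coming from $\phi$, and then simplifying the resulting expressions using Lemmas \ref{lem:pre_3} and \ref{lem:pre_4}. A preliminary observation I would record first: for any left $B$-module $X$, viewed as a left $A$-module through $\phi$, one has $\nu_{A}(X) = \nu_{A}(\nu_{B}(X))$. The inclusion $\supseteq$ is immediate since $\nu_{B}(X)\subseteq X$ and $\nu_{A}$ is a subfunctor; for $\subseteq$, if $\phi(e)x = x$ with $e\in\Ai$, pick $e'\in\Bi$ with $e'\phi(e) = \phi(e)e' = \phi(e)$ (possible since $B$ is idempotented), whence $e'x = e'\phi(e)x = \phi(e)x = x$, so $x\in\nu_{B}(X)$.

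For (ii): give $B$ the structure of an object of $\overline{\cat{M}}(B\otimes A^{\op})$ in which $B$ acts on the left by multiplication and $A$ acts on the right through $\phi$ (that is, $b\cdot a := b\phi(a)$). Since $B$ is idempotented, $\nu_{B}(B) = B$, so Proposition \ref{prop:adm_1} applied with $L = B$ reads $\Hom_{B}(B\otimes_{A} M, N)\cong \Hom_{A}(M, \nu_{A}(\Hom_{B}(B, N)))$. I would then check that the left $A$-module structure on $\Hom_{B}(B,N)$ produced by the proposition (via the right $A$-action on $L$) is exactly the restriction along $\phi$ of the standard left $B$-module structure on $\Hom_{B}(B,N)$; granting this, the preliminary observation gives $\nu_{A}(\Hom_{B}(B,N)) = \nu_{A}(\nu_{B}(\Hom_{B}(B,N)))$, and Lemma \ref{lem:pre_3}(c) (with $B$ in place of $A$ and $N$ in place of $M$) identifies $\nu_{B}(\Hom_{B}(B,N))\cong N$ as $B$-modules, hence $\nu_{A}(\Hom_{B}(B,N))\cong\nu_{A}(N)$ as $A$-modules. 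Substituting yields (ii).

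For (i): apply Proposition \ref{prop:adm_1} with the roles of $A$ and $B$ interchanged, taking the ``$M$'' there to be our $N\in\cat{M}(B)$, the ``$N$'' there to be our $M\in\cat{M}(A)$, and $L = B$ with the $\overline{\cat{M}}(A\otimes B^{\op})$-structure in which $A$ acts on the left through $\phi$ and $B$ acts on the right by multiplication. The right-hand side of the proposition is then already $\Hom_{B}(N, \nu_{B}(\Hom_{A}(\nu_{A}(B), M)))$, the right-hand side of (i), and the left-hand side is $\Hom_{A}(\nu_{A}(B)\otimes_{B} N, M)$. To conclude I would identify $\nu_{A}(B)\otimes_{B} N$ with $\nu_{A}(N)$: Lemma \ref{lem:pre_4} (with $L = B$) gives $\nu_{A}(B)\otimes_{B} N\cong\nu_{A}(B\otimes_{B} N)$, and Lemma \ref{lem:pre_3}(b) (again with $B$ in place of $A$) gives $B\otimes_{B} N\cong N$ as $B$-modules, hence as $A$-modules through $\phi$; composing gives $\nu_{A}(B)\otimes_{B} N\cong\nu_{A}(N)$ and therefore (i).

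The proof involves no analytic input; it is a translation exercise, and the only thing requiring care is bookkeeping the various one-sided module structures once one restricts scalars along $\phi$ and checks they agree with the structures manufactured inside Proposition \ref{prop:adm_1}. Naturality of the isomorphisms in (i) and (ii) is automatic, since each is a composite of the natural isomorphisms supplied by Proposition \ref{prop:adm_1} and Lemmas \ref{lem:pre_3}, \ref{lem:pre_4}. The main (mild) obstacle is simply keeping the bimodule conventions straight throughout.
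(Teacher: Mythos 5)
Your proof is correct and follows essentially the same route as the paper: both specialize Proposition \ref{prop:adm_1} to $L = B$ with the two bimodule structures induced by $\phi$, then simplify using Lemmas \ref{lem:pre_3} and \ref{lem:pre_4}. Your preliminary observation that $\nu_{A}(X) = \nu_{A}(\nu_{B}(X))$ for $B$-modules $X$ restricted along $\phi$ makes explicit a step the paper leaves implicit in part (ii) (one can also note directly that $\phi(e)\in\Bi$ whenever $e\in\Ai$, which shortens that argument slightly).
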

\begin{proof}
We prove (i). Interchanging the role of the algebras $A$ and $B$ in Proposition \ref{prop:adm_1} and considering $B$ as an $A\otimes B^{{\rm op}}$-module, we get that 
\begin{equation}\label{eq:adm_1}
\Hom_{A}(\nu_{A}(B)\otimes_{B} N,M)\cong \Hom_{B}\big(N, \nu_{B}\big(\Hom_{A}(\nu_{A}(B),M)\big)\big).
\end{equation}
By Lemma \ref{lem:pre_4} and Lemma \ref{lem:pre_3} the left hand side of the isomorphism in (\ref{eq:adm_1}) is isomorphic to $\Hom_{A}(\nu_{A}(N),M)$. This finishes the proof of (i).

Next we prove (ii). Considering $B$ as a $B\otimes A^{{\rm op}}$-module and appealing to Proposition \ref{prop:adm_1}, we get that
\[
\Hom_{B}(B\otimes_{A} M,N)\cong \Hom_{A}\big(M, \nu_{A}\big(\Hom_{B}(B,N)\big)\big).
\]
The statement now follows from Lemma \ref{lem:pre_3}.
\end{proof}

\subsection{Resolutions} \label{s:model_structure}

Every object in the category of complexes over a unital ring admits a quasi-isomorphism from a K-projective complex, and a quasi-isomorphism to a K-injective one. In this subsection, we show that the same holds for complexes of non-degenerate modules over an idempotented algebra.

\begin{lemma} \label{l:401}
In the category $\cat{M}(A)$, direct sums and direct products are exact.
\end{lemma}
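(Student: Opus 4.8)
The plan is to reduce everything to the corresponding statement for $\overline{\cat{M}}(A)$, where it is standard. Direct sums are easy: the forgetful functor $\cat{M}(A) \to \overline{\cat{M}}(A)$ preserves colimits (it is a left adjoint, being left adjoint to $\nu_A$; in fact, it is even more elementary than that), and a direct sum of non-degenerate modules is visibly non-degenerate, so the direct sum computed in $\cat{M}(A)$ agrees with the one computed in $\overline{\cat{M}}(A)$. Since direct sums are exact in $\overline{\cat{M}}(A)$ (a Grothendieck category, or just by the elementwise description of kernels and cokernels), and since $\cat{M}(A)$ is a full abelian subcategory of $\overline{\cat{M}}(A)$ with exactness detected in $\overline{\cat{M}}(A)$, exactness of direct sums in $\cat{M}(A)$ follows immediately.

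For direct products the subtlety is that the product in $\cat{M}(A)$ is \emph{not} the naive product: given a family $\{M_j\}_{j \in J}$ in $\cat{M}(A)$, the product in $\cat{M}(A)$ is $\nu_A\big(\prod_j M_j\big)$, where $\prod_j M_j$ denotes the product in $\overline{\cat{M}}(A)$ (this is the standard description of the product in a reflective-type subcategory; concretely, $\nu_A$ being right adjoint to the inclusion means it preserves limits, and the product in $\overline{\cat{M}}(A)$ of non-degenerate modules, while not itself non-degenerate in general, has $\nu_A$ of it serving as the product in $\cat{M}(A)$). So I would argue as follows. Let $0 \to M_j' \to M_j \to M_j'' \to 0$ be a family of short exact sequences in $\cat{M}(A)$, equivalently in $\overline{\cat{M}}(A)$. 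The product $\prod_j M_j'' $ computed in $\overline{\cat{M}}(A)$ fits into an exact sequence $0 \to \prod_j M_j' \to \prod_j M_j \to \prod_j M_j'' \to 0$, since products are exact in $\overline{\cat{M}}(A)$. Now apply $\nu_A$: by Lemma \ref{lem:101}, $\nu_A$ is exact, so
\[
0 \to \nu_A\Big(\prod_j M_j'\Big) \to \nu_A\Big(\prod_j M_j\Big) \to \nu_A\Big(\prod_j M_j''\Big) \to 0
\]
is exact in $\cat{M}(A)$. The three terms are precisely the products in $\cat{M}(A)$ of the respective families, so products are exact in $\cat{M}(A)$.

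The main thing to nail down carefully — and the only real content beyond invoking Lemma \ref{lem:101} — is the claim that the product in $\cat{M}(A)$ is computed as $\nu_A$ of the product in $\overline{\cat{M}}(A)$. This follows formally from the adjunction: if $X \in \cat{M}(A)$ then $\Hom_{\cat{M}(A)}\big(X, \nu_A(\prod_j M_j)\big) \cong \Hom_{\overline{\cat{M}}(A)}\big(X, \prod_j M_j\big) \cong \prod_j \Hom_{\overline{\cat{M}}(A)}(X, M_j) = \prod_j \Hom_{\cat{M}(A)}(X, M_j)$, using that the inclusion is fully faithful and that $\nu_A$ is its right adjoint. With this in hand, together with the exactness of $\nu_A$ and the (elementary) exactness of products and coproducts in the full module category $\overline{\cat{M}}(A)$, both halves of the lemma drop out. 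I expect no genuine obstacle here; the proof is a short formal argument, and the one pitfall to avoid is forgetting that products in $\cat{M}(A)$ differ from those in $\overline{\cat{M}}(A)$.
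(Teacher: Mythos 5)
Your proof is correct and follows the same route as the paper: direct sums coincide with those in $\overline{\cat{M}}(A)$ (hence exact), while products in $\cat{M}(A)$ are $\nu_A$ applied to products in $\overline{\cat{M}}(A)$, and exactness then follows from Lemma~\ref{lem:101}. The only difference is that you spell out the adjunction argument identifying the product, which the paper states without proof.
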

\begin{proof}
Let $\{M_{i}\}_{i\in I}$ be a family of objects in $\cat{M}(A)$. The standard direct sum of this family, $\bigoplus_{i\in I} M_{i}$, belongs to $\cat{M}(A)$: it is therefore the direct sum in $\cat{M}(A)$. Thus direct sums are exact.

It is not necessarily the case that $\prod_{i\in I}M_{i}$ lies in $\cat{M}(A)$; instead, the product of the family $\{M_{i}\}_{i\in I}$ in $\cat{M}(A)$ is $\nu_{A}(\prod_{i\in I}M_{i})$. By Lemma \ref{lem:101}, $\nu_{A}$ is exact. It follows that direct products are also exact in $\cat{M}(A)$.
\end{proof}

Fix an injective cogenerator $\mathbb{K}^{*}$ for the category $\cat{M}(\mathbb{K})$.

\begin{lemma} \label{l:402}
The following hold:
\begin{enumerate}
\rmitem{i} $\bigoplus_{e\in \Ai} Ae$ is a projective generator for the category $\cat{M}(A)$.
\rmitem{ii} $\nu_{A}(\mathrm{Hom}_{\mathbb{K}}(A,\mathbb{K}^{*}))$ is an injective cogenerator for the category $\cat{M}(A)$.
\end{enumerate}
\end{lemma}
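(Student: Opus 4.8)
The statement claims two facts: that $P := \bigoplus_{e \in \Ai} Ae$ is a projective generator of $\cat{M}(A)$, and that $I := \nu_A(\Hom_{\mathbb{K}}(A, \mathbb{K}^*))$ is an injective cogenerator. I would treat the two parts in parallel, exploiting the duality visible in the statements of Lemma~\ref{lem:04} and Corollary~\ref{corr:adm_1}. For the first part, each $Ae$ is non-degenerate (since $e \cdot e = e$ shows $e \in \nu_A(Ae)$, and more generally $e$ acts as an idempotent covering any element of $Ae$), hence $P \in \cat{M}(A)$. Projectivity of $P$ in $\cat{M}(A)$: by Lemma~\ref{lem:04}(i) the functor $\Hom_A(Ae, -) \cong e(-)$ on $\cat{M}(A)$ agrees with the exact functor $M \mapsto eM$ — indeed $eM = eA \otimes_A M$ by Lemma~\ref{lem:04}(iii), and right-multiplication by the idempotent $e$ is exact on any short exact sequence of modules — so each $Ae$ is projective, and a direct sum of projectives is projective (using that $\Hom_A(P, -)$ commutes with the product, or directly via the lifting property). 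That $P$ is a generator is the content of Lemma~\ref{lem:01} read through Lemma~\ref{lem:04}: if $\psi : M \to N$ is a nonzero epimorphism-test, surjectivity of all $\Hom_A(Ae, \psi)$ forces $\psi$ surjective; dually, every non-degenerate $M$ is covered by $\bigoplus_{e \in \Ai} Ae^{(X_e)}$ since $M = \bigcup_{e} eM$ and each $eM$ is a quotient of a sum of copies of $Ae$ via $Ae \to M$, $ae \mapsto aem$.

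For the second part, I would first identify $\Hom_A(M, I)$ with a familiar functor. Take $B = \mathbb{K}$ (a unital, hence idempotented, algebra), and apply Corollary~\ref{corr:adm_1}(ii) — or more directly Proposition~\ref{prop:adm_1} with $L = A$ viewed as a $\mathbb{K} \otimes A^{\op}$-module and $N = \mathbb{K}^*$ — to obtain a natural isomorphism
\[
\Hom_A\big(M,\ \nu_A(\Hom_{\mathbb{K}}(A, \mathbb{K}^*))\big) \cong \Hom_{\mathbb{K}}(A \otimes_A M,\ \mathbb{K}^*) \cong \Hom_{\mathbb{K}}(M, \mathbb{K}^*),
\]
where the last step uses Lemma~\ref{lem:pre_3}(b). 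Since $\mathbb{K}^*$ is an injective cogenerator for $\cat{M}(\mathbb{K})$, the functor $\Hom_{\mathbb{K}}(-, \mathbb{K}^*)$ is exact and faithful on $\cat{M}(\mathbb{K})$; because the forgetful functor $\cat{M}(A) \to \cat{M}(\mathbb{K})$ is exact and faithful, the composite $\Hom_A(-, I) \cong \Hom_{\mathbb{K}}(-, \mathbb{K}^*)$ is exact on $\cat{M}(A)$, giving injectivity of $I$, and detects nonzero objects, giving the cogenerator property. (Concretely, for cogenerator: if $M \neq 0$ pick $0 \neq m \in M$, extend a $\mathbb{K}$-linear functional nonvanishing on $m$ to an element of $\Hom_{\mathbb{K}}(M, \mathbb{K}^*)$, and transport it across the isomorphism to a map $M \to I$ nonzero at $m$.)

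The main obstacle I anticipate is bookkeeping in the adjunction of Proposition~\ref{prop:adm_1} / Corollary~\ref{corr:adm_1}: one must be careful that $\nu_A$ is being applied on the correct side and that the naturality is genuinely in $M \in \cat{M}(A)$, since the functors $\nu$ do not commute with everything in sight. A secondary subtlety is verifying that the two $\nu_A$'s — one in the definition of $I$, one potentially arising from Corollary~\ref{corr:adm_1}(ii) applied with the structure map $\mathbb{K} \to A$ — are compatible; this is exactly handled by Lemma~\ref{lem:pre_3}. Everything else (non-degeneracy of the $Ae$, exactness of $e(-)$, the generator argument via $M = \bigcup eM$) is routine given the lemmas already proved.
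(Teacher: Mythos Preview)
Your proposal is correct and follows essentially the same route as the paper: projectivity of each $Ae$ via Lemma~\ref{lem:04}, and for part (ii) the adjunction from Proposition~\ref{prop:adm_1} combined with $A \otimes_A M \cong M$ from Lemma~\ref{lem:pre_3} to identify $\Hom_A(M,I)$ with $\Hom_{\mathbb{K}}(M,\mathbb{K}^*)$. The only cosmetic difference is that for the generator claim the paper checks the condition ``$\Hom_A(\bigoplus_e Ae, M)=0 \Rightarrow M=0$'' directly, whereas you verify the equivalent condition that every $M$ is a quotient of a sum of the $Ae$'s; both are immediate from $M = \bigcup_e eM$.
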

\begin{proof}
By Lemma \ref{lem:04}, $Ae$ is a projective object in $\cat{M}(A)$ for every $e\in \Ai$. Therefore $\bigoplus_{e\in \Ai} Ae$  is a projective object in $\cat{M}(A)$. Let $M\in \cat{M}(A)$, and suppose $\mathrm{Hom}_{A}(\bigoplus_{e\in \Ai} Ae,M)=0$. By Lemma \ref{lem:04}, this implies that $eM=0$ for $e\in \Ai$, and thus that $M=0$. This proves (i).

Let $M\in \cat{M}(A)$. By Proposition \ref{prop:adm_1}, there is an isomorphism of modules $\mathrm{Hom}_{\mathbb{K}}(A\ot_{A} M, \mathbb{K}^{*})\cong \mathrm{Hom}_{A}(M,\nu_{A}(\mathrm{Hom}_{\mathbb{K}}(A, \mathbb{K}^{*})))$. By Lemma \ref{lem:pre_3}, however, $A\otimes_{A} M\cong M$. It follows that $\nu_{A}(\mathrm{Hom}_{\mathbb{K}}(A,\mathbb{K}^{*}))$ is an injective cogenerator in $\cat{M}(A)$, as $\mathbb{K}^{*}$ is an injective cogenerator in $\cat{M}(\mathbb{K})$.
\end{proof}

\begin{prop} \label{prop:501}
The following hold:
\begin{enumerate}
\rmitem{i} Every complex in $\cat{K}(A)$ admits a quasi-isomorphism to a K-injective complex.
\rmitem{ii} Every complex in $\cat{K}(A)$ admits a quasi-isomorphism from a K-projective complex.
\end{enumerate}
\end{prop}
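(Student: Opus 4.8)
The plan is to reduce both statements to the corresponding well-known facts for the category of complexes over a unital ring (or at least over a Grothendieck abelian category / an abelian category with enough projectives), via the abelian-category structure of $\cat{M}(A)$ established above. For part (i), I would first observe that $\cat{M}(A)$ is a Grothendieck abelian category: it is abelian, has exact filtered colimits (direct sums are exact by Lemma \ref{l:401}, and filtered colimits are computed as in $\overline{\cat{M}}(A)$ followed by $\nu_A$, which is exact by Lemma \ref{lem:101}), and has a generator $\bigoplus_{e\in\Ai}Ae$ by Lemma \ref{l:402}(i). For any Grothendieck abelian category, every complex admits a quasi-isomorphism to a K-injective complex — this is a theorem of Spaltenstein (and Alonso–Jeremías–Souto, Serpé). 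Alternatively, and more self-containedly, one can run the classical Spaltenstein construction by hand: using the injective cogenerator $I_0 := \nu_A(\Hom_{\mathbb K}(A,\mathbb K^*))$ from Lemma \ref{l:402}(ii), $\cat{M}(A)$ has enough injectives, so one builds, for a complex $M$, a tower of Cartan–Eilenberg-type injective resolutions of the stupid truncations, takes the homotopy limit (a mapping-telescope-style total complex of products), and checks that the resulting complex is K-injective and receives a quasi-isomorphism from $M$. The one point requiring care in the idempotented setting is that the relevant products must be taken inside $\cat{M}(A)$, i.e. one applies $\nu_A$ to the naive product; Lemma \ref{l:401} (exactness of products in $\cat{M}(A)$) is exactly what makes the cohomology bookkeeping in the homotopy-limit argument go through unchanged.

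For part (ii), the analogous strategy applies on the projective side. By Lemma \ref{l:402}(i), $\cat{M}(A)$ has a projective generator $P := \bigoplus_{e\in\Ai}Ae$, hence enough projectives, and by Lemma \ref{l:401} direct sums in $\cat{M}(A)$ are exact (they coincide with direct sums in $\overline{\cat{M}}(A)$). So $\cat{M}(A)$ is an abelian category with enough projectives in which countable — indeed arbitrary — direct sums are exact, and Spaltenstein's dual construction produces, for every complex $M$, a K-projective complex $P_M$ together with a quasi-isomorphism $P_M \to M$: one takes projective resolutions of the stupid truncations $M^{\geq n}$, assembles them into a double complex, and forms the total complex using direct sums; exactness of direct sums guarantees this total complex has the right cohomology, and a standard filtration/colimit argument shows K-projectivity. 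Again the only idempotented-algebra-specific check is that all the modules appearing (the terms of the projective resolutions, direct sums thereof) genuinely lie in $\cat{M}(A)$, which holds since $P$ and its direct sums and quotients are non-degenerate.

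I expect the main obstacle — or rather, the main thing needing explicit verification rather than a citation — to be confirming that the homotopy-limit (respectively homotopy-colimit) totalization behaves correctly with respect to the functor $\nu_A$, i.e. that taking products inside $\cat{M}(A)$ (which is $\nu_A$ of the ambient product) does not disturb the acyclicity estimates that force K-injectivity. This is precisely where Lemma \ref{l:401} is invoked. A clean way to package the whole argument, avoiding re-deriving Spaltenstein, is: $\cat{M}(A)$ is a Grothendieck category with a projective generator; for (i) cite the existence of K-injective resolutions in any Grothendieck category; for (ii) cite the existence of K-projective resolutions in any abelian category with enough projectives and exact countable direct sums. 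If a more hands-on proof is preferred for the paper's self-contained style, I would carry out the two-tower construction explicitly, using $P$ and $I_0$ from Lemma \ref{l:402} as the source of enough projectives/injectives, and flag the use of Lemma \ref{l:401} at the single step where exactness of products (resp. sums) in $\cat{M}(A)$ is needed.
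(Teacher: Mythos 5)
Your proof is correct and follows essentially the same route as the paper: verify, via Lemma \ref{l:401} and Lemma \ref{l:402}, that $\cat{M}(A)$ has exact sums and products together with a projective generator and an injective cogenerator, and then invoke a standard existence theorem for K-projective and K-injective resolutions. The paper cites Krause \cite[Sections 5.1, 5.2]{Kr} for this last step, whereas you appeal to the Grothendieck-category / Spaltenstein formulations (and sketch the explicit tower construction), but the argument and the lemmas doing the work are the same.
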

\begin{proof}
By Lemma \ref{l:401}  the category $\cat{M}(A)$ has exact direct products and sums; by Lemma \ref{l:402}, it admits a projective generator and an injective cogenerator. We can therefore appeal to \cite[Section 5.1]{Kr} for (i), and \cite[Section 5.2]{Kr} for (ii).
\end{proof}

\begin{remark}
There is a certain asymmetry present in the statements of Lemma \ref{lem:pre_3}, Corollary \ref{corr:adm_1}, and Lemma \ref{l:402}. This can be explained as follows. Quillen, in an unpublished note \cite{Q}, considers potential definitions for a suitable category of modules over a non-unital ring. Briefly, two possible options are as follows. Let $S$ be an associative ring, and suppose $S$ sits as an ideal in a unital ring $R$. Quillen defines the class of \emph{firm} modules to be those $M\in \cat{M}(R)$ such that the canonical map $S\otimes_{R}M\to M$ is an isomorphism, and the class of \emph{closed} modules to be those $M\in \cat{M}(R)$ such that the map $M\to \Hom_{R}(S,M)$ is an isomorphism. In \cite{Q}, Quillen shows that for idempotent $S$ (i.e. $S=S^{2}$), these categories do not depend on the choice of ambient ring $R$ and are equivalent to each other. 

The equivalence of (a.) and (b.) in Lemma \ref{lem:pre_3} shows that for an idempotented algebra $A$, the category $\cat{M}(A)$ coincides with the class of firm $A$-modules. If we were to instead work with the class of closed $A$-modules, we would get an analogous theory.
\end{remark}

\subsection{Derived categories of idempotented algebras}\label{s:adj_der}
We now obtain derived analogues of the results in \S \ref{s:pre_lem}. It is easy to see that the canonical functors $\cat{C}(A)\to \overline{\cat{C}}(A)$ and $\cat{K}(A)\to \overline{\cat{K}}(A)$ are fully faithful. The functor $\nu_{A}$ naturally extends to functors from $\overline{\cat{C}}(A)\to \cat{C}(A)$ and $\overline{\cat{K}}(A)\to \cat{K}(A)$; in both cases, these functors are right adjoint to the corresponding inclusion functors. 

\begin{lemma} \label{cor:201}
The following hold:
\begin{enumerate}
\rmitem{i} If $P$ is a K-projective object in $\cat{K}(A)$, then it is K-projective in $\overline{\cat{K}}(A)$.
\rmitem{ii} If $I$ is a K-injective object in $\overline{\cat{K}}(A)$, then $\nu_{A}(I)$ is K-injective in $\cat{K}(A)$.
\end{enumerate}
\end{lemma}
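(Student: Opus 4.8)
The plan is to exploit the adjunction between the inclusion $\cat{K}(A) \hookrightarrow \overline{\cat{K}}(A)$ and $\nu_A$, together with the exactness of $\nu_A$ (Lemma \ref{lem:101}), to transfer the defining acyclicity conditions back and forth.

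\medskip

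\emph{Part (i).} Suppose $P$ is K-projective in $\cat{K}(A)$, and let $N$ be an acyclic complex in $\overline{\cat{K}}(A)$; I want to show $\Hom_A(P,N)$ is acyclic. Since every term $P^i$ lies in $\cat{M}(A)$, for any $A$-module $X$ (degenerate or not) we have $\Hom_A(P^i, X) = \Hom_A(P^i, \nu_A(X))$, using that $\nu_A$ is right adjoint to the inclusion and $P^i$ is non-degenerate. Applying this degreewise, $\Hom_A(P, N) = \Hom_A(P, \nu_A(N))$ as complexes of $\mathbb{K}$-modules. Now $\nu_A(N)$ is a complex in $\cat{K}(A)$, and it is acyclic because $\nu_A$ is exact (Lemma \ref{lem:101}), hence commutes with cohomology. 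Since $P$ is K-projective in $\cat{K}(A)$, $\Hom_A(P, \nu_A(N))$ is acyclic, and we are done.

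\medskip

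\emph{Part (ii).} Suppose $I$ is K-injective in $\overline{\cat{K}}(A)$; I want $\nu_A(I)$ to be K-injective in $\cat{K}(A)$, i.e. $\Hom_A(N, \nu_A(I))$ is acyclic for every acyclic $N$ in $\cat{K}(A)$. The adjunction again gives, degreewise, $\Hom_A(N^j, \nu_A(I^{j+k})) = \Hom_A(N^j, I^{j+k})$ since $N^j \in \cat{M}(A)$; assembling these identifications into a map of complexes yields $\Hom_A(N, \nu_A(I)) = \Hom_A(N, I)$. But $N$, viewed in $\overline{\cat{K}}(A)$ via the fully faithful inclusion, is still acyclic, so K-injectivity of $I$ in $\overline{\cat{K}}(A)$ forces $\Hom_A(N, I)$ to be acyclic. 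Hence $\Hom_A(N, \nu_A(I))$ is acyclic, as required.

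\medskip

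The only genuinely delicate point — and the step I would be most careful about — is checking that the degreewise adjunction isomorphisms $\Hom_A(P^i, X) \cong \Hom_A(P^i, \nu_A X)$ (resp. for the second variable) are natural enough in the module argument to glue into an \emph{isomorphism of complexes}, i.e. that they are compatible with the Hom-differential $\partial(f) = \partial \circ f - (-1)^k f \circ \partial$. This is immediate once one observes that the isomorphism $\Hom_A(M, X) \cong \Hom_A(M, \nu_A X)$ for $M$ non-degenerate is simply the identity map on underlying sets: every $A$-linear $f\colon M \to X$ automatically has image in $\nu_A(X)$ because $M$ is non-degenerate (for $m \in M$ pick $e$ with $em=m$, so $f(m) = ef(m) \in \nu_A(X)$). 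With the maps being literal equalities, compatibility with the differentials is automatic, and the argument goes through with no further obstacle.
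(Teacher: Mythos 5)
Your proof is correct and follows essentially the same route as the paper: exploit the adjunction between the inclusion $\cat{K}(A)\hookrightarrow\overline{\cat{K}}(A)$ and $\nu_A$, together with exactness of $\nu_A$, to reduce the acyclicity test to the smaller (resp.\ larger) category. The one cosmetic difference is that you work directly with the Hom-complex $\Hom_A(-,-)$ and note that the adjunction is a literal equality of sets (so the identification is automatically a chain map), whereas the paper invokes the adjunction at the level of $\Hom_{\overline{\cat{K}}(A)}$ and $\Hom_{\cat{K}(A)}$ and implicitly uses shifts of $N$; both are standard and equivalent.
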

\begin{proof}
We prove (i). Let $P$ be a K-projective complex in $\cat{K}(A)$, and suppose $N$ is an acyclic complex in $\overline{\cat{K}}(A)$.  Observe that $\nu_{A}(N)$ is an object in $\cat{K}(A)$; by Lemma \ref{lem:101} it is acyclic. Since $\nu_{A}$ is right adjoint to the inclusion of $\cat{K}(A)$ in $\overline{\cat{K}}(A)$ there is an isomorphism of modules $\mathrm{Hom}_{\overline{\cat{K}}(A)}(P,N)\cong \mathrm{Hom}_{\cat{K}(A)}(P,\nu_{A}(N))$. Since $P$ is K-projective in $\cat{K}(A)$ and $\nu_{A}(N)$ is acyclic, $\mathrm{Hom}_{\cat{K}(A)}(P,\nu_{A}(N)) = 0$. The claim follows.
The proof of (ii) is similar. 
\end{proof}

By Lemma \ref{cor:201}, it follows that the canonical functor  $\cat{D}(A)\to \overline{\cat{D}}(A)$  is fully faithful as well. Since $\nu_{A}$ is exact, it admits an extension from  $\overline{\cat{D}}(A)\to \cat{D}(A)$, which we again denote by $\nu_{A}$. Again by Lemma \ref{cor:201}, $\nu_{A}: \overline{\cat{D}}(A)\to \cat{D}(A)$ is right adjoint to the inclusion of $\cat{D}(A)$ in $\overline{\cat{D}}(A)$.

\begin{prop} \label{prop:adm_1_der}
Let $M\in \cat{C}(A)$, $N\in \cat{C}(B)$ and $L\in \overline{\cat{C}}(B\otimes A^{{\rm op}})$. Then, there is a natural isomorphism of complexes: 
\[
\Hom_{B}(\nu_{B}(L)\otimes_{A}M,N)\cong \Hom_{A}\big(M, \nu_{A}\big(\Hom_{B}(\nu_{B}(L),N)\big)\big). 
\]
This induces a natural isomorphism of modules:
\[
\Hom_{\cat{C}(B)}(\nu_{B}(L)\otimes_{A}M,N)\cong \Hom_{\cat{C}(A)}\big(M, \nu_{A}\big(\Hom_{B}(\nu_{B}(L),N)\big)\big).
\]
\end{prop}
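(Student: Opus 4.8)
The plan is to establish the isomorphism of complexes first, and then observe that passing to $\operatorname{H}^{0}$ yields the isomorphism of $\operatorname{Hom}$-modules, since $\operatorname{Hom}_{\cat{C}(A)}(X,Y) = \operatorname{H}^{0}(\operatorname{Hom}_{A}(X,Y))$ by the conventions recalled in the introduction. For the isomorphism of complexes, the natural approach is to reduce to the module-level statement already proved in Proposition \ref{prop:adm_1} by unwinding the definitions of the $\operatorname{Hom}$-complexes and the tensor-product complex degree by degree. The point is that all three operations appearing---$\nu_{B}$ applied to a complex, the tensor product $\otimes_{A}$ of complexes, and the $\operatorname{Hom}$-complex $\operatorname{Hom}_{B}(-,-)$---are defined termwise (using direct sums and direct products over the appropriate index sets), so in each cohomological degree $k$ the $k$-th term of the left-hand complex is a product over $i$ of modules of the form $\operatorname{Hom}_{B}\big((\nu_{B}(L)\otimes_{A}M)^{i}, N^{i+k}\big) = \operatorname{Hom}_{B}\big(\bigoplus_{p+q=i}\nu_{B}(L^{p})\otimes_{A}M^{q},\, N^{i+k}\big)$.

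First I would write out the $k$-th term of each side explicitly. On the left, using that $\nu_{B}(L)^{p} = \nu_{B}(L^{p})$ and that $\operatorname{Hom}$ out of a direct sum is a product, the degree-$k$ term becomes $\prod_{i}\prod_{p+q=i}\operatorname{Hom}_{B}\big(\nu_{B}(L^{p})\otimes_{A}M^{q},\, N^{i+k}\big)$. On the right, the degree-$k$ term of $\operatorname{Hom}_{A}\big(M,\nu_{A}(\operatorname{Hom}_{B}(\nu_{B}(L),N))\big)$ is $\prod_{q}\operatorname{Hom}_{A}\big(M^{q},\, \nu_{A}(\operatorname{Hom}_{B}(\nu_{B}(L),N))^{q+k}\big)$, and $\nu_{A}$ commutes with the products defining the inner $\operatorname{Hom}$-complex by Lemma \ref{lem:101} (it is exact and commutes with arbitrary direct sums; one checks it commutes with the relevant products here, or alternatively computes $\nu_{A}$ of each factor), so this is $\prod_{q}\operatorname{Hom}_{A}\big(M^{q},\, \prod_{p}\nu_{A}(\operatorname{Hom}_{B}(\nu_{B}(L^{p}),N^{p+q+k}))\big) = \prod_{q}\prod_{p}\operatorname{Hom}_{A}\big(M^{q},\,\nu_{A}(\operatorname{Hom}_{B}(\nu_{B}(L^{p}),N^{p+q+k}))\big)$. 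Reindexing with $i = p+q$, both sides are the same product indexed by pairs $(p,q)$, and Proposition \ref{prop:adm_1} gives, for each fixed pair, a natural isomorphism between the corresponding factors $\operatorname{Hom}_{B}\big(\nu_{B}(L^{p})\otimes_{A}M^{q},N^{i+k}\big) \cong \operatorname{Hom}_{A}\big(M^{q},\nu_{A}(\operatorname{Hom}_{B}(\nu_{B}(L^{p}),N^{i+k}))\big)$. Taking the product of these gives a degreewise isomorphism.

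It then remains to check that this collection of degreewise isomorphisms is compatible with the differentials, i.e. that it is a morphism of complexes. I would do this by chasing an element through both differentials using the explicit sign conventions recorded in the introduction for $\partial$ on $\operatorname{Hom}$-complexes and on tensor-product complexes. Concretely, given $f$ in the $k$-th term on the left, $\partial(f) = \partial_{N}\circ f - (-1)^{k} f\circ \partial_{(\nu_{B}(L)\otimes_{A}M)}$, and the differential on the tensor complex is $\partial(l\otimes m) = \partial l\otimes m + (-1)^{|l|} l\otimes \partial m$; transporting this across the adjunction of Proposition \ref{prop:adm_1} (whose isomorphism is given by the explicit formula $\hat f(m)(l) = f(l\otimes m)$) one recovers exactly the differential $\partial(\hat f) = \partial \circ \hat f - (-1)^{k}\hat f\circ \partial_{M}$ together with the internal differential of $\nu_{A}(\operatorname{Hom}_{B}(\nu_{B}(L),N))$, the signs matching because the adjunction formula does not introduce any extra sign. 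Naturality in all three variables is inherited from the naturality in Proposition \ref{prop:adm_1}. Finally, applying $\operatorname{H}^{0}$ to the isomorphism of complexes and invoking $\operatorname{Hom}_{\cat{C}(-)}(X,Y)=\operatorname{H}^{0}(\operatorname{Hom}_{-}(X,Y))$ yields the second displayed isomorphism.

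The main obstacle I anticipate is purely bookkeeping: getting the signs in the differential-compatibility check to line up, and making sure the interchange of $\nu_{A}$ (or $\nu_{B}$) with the products defining the $\operatorname{Hom}$-complexes is justified rather than merely asserted. Neither is conceptually hard---the sign verification is the standard computation showing the tensor-Hom adjunction is an adjunction at the level of complexes, and the commutation of $\nu_{A}$ with the relevant products follows since $\nu_{A}$ of a product of modules, each of which receives the same idempotent-action bound degreewise, behaves well---but it is the step most likely to require care in writing up.
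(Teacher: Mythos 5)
Your proof follows the same route as the paper's: compute each graded piece of the two Hom-complexes, apply Proposition~\ref{prop:adm_1} to each factor, verify compatibility with the differentials, and then pass from the isomorphism of complexes to the isomorphism of Hom-modules. Two points deserve correction. First, the last step should apply $\operatorname{Z}^{0}$, not $\operatorname{H}^{0}$: the set of chain maps $\Hom_{\cat{C}(A)}(X,Y)$ is the module of degree-zero cocycles of $\Hom_{A}(X,Y)$, whereas $\operatorname{H}^{0}$ computes homotopy classes, i.e.\ $\Hom_{\cat{K}(A)}(X,Y)$. (You inherited this from the ``Notation and conventions'' paragraph, where the two identifications are inadvertently swapped; the paper's own proof of this proposition correctly invokes $\operatorname{Z}^{0}$.) Second, the step where you move $\nu_{A}$ past the products defining the Hom-complex is not justified by saying $\nu_{A}$ ``commutes with the relevant products'' --- $\nu_{A}$ does not commute with arbitrary direct products. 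The reassembly works because $M^{q}$ is non-degenerate, so any $A$-linear map out of $M^{q}$ has image in the non-degenerate part of its target; hence
\[
\Hom_{A}\bigl(M^{q},\nu_{A}(\textstyle\prod_{p}X_{p})\bigr)\cong \Hom_{A}\bigl(M^{q},\textstyle\prod_{p}X_{p}\bigr) \cong \textstyle\prod_{p}\Hom_{A}(M^{q},X_{p}) \cong \textstyle\prod_{p}\Hom_{A}(M^{q},\nu_{A}(X_{p})),
\]
which is exactly the identification your reindexing requires. With those two fixes the argument matches the paper's and is correct.
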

\begin{proof}
Let $k\in \mathbb{Z}$. Consider the following chain of isomorphisms:
\[
\begin{split}
\Hom_{B}(\nu_{B}(L)\otimes_{A} M, N)^{k}& \cong  \prod_{i\in \mathbb Z}\Hom_{B}((\nu_{B}(L)\otimes_{A} M)^{i}, N^{i+k})\\
& \cong \prod_{i\in \mathbb Z}\Hom_{B}(\bigoplus_{j+l=i}\nu_{B}(L)^{j}\otimes_{A} M^{l}, N^{i+k})\\
& \cong  \prod_{i\in \mathbb Z}\prod_{j+l=i}\Hom_{B}(\nu_{B}(L)^{j}\otimes_{A} M^{l}, N^{j+l+k})\\
& \cong  \prod_{i\in \mathbb Z}\prod_{j+l=i}\Hom_{A}(M^{l}, \nu_{A}(\Hom_{B}(\nu_{B}(L)^{j},N^{j+l+k})))\\
& \cong  \prod_{l\in \mathbb Z}\Hom_{A}(M^{l},\nu_{A}( \prod_{j\in  \mathbb{Z}}\nu_{A}(\Hom_{B}(\nu_{B}(L)^{j},N^{j+l+k}))))\\
& \cong  \prod_{l\in \mathbb Z}\Hom_{A}(M^{l},\nu_{A}( \prod_{j\in \mathbb{Z}}(\Hom_{B}(\nu_{B}(L)^{j},N^{j+l+k}))))\\
& \cong \prod_{\l\in \mathbb Z}\Hom_{A}(M^{l}, \nu_{A}(\Hom_{B}(\nu_{B}(L),N)^{l+k}))\\
& \cong \Hom_{A}(M, \Hom_{B}(\nu_{A}(L), N))^{k},
\end{split}
\]
where the fourth isomorphism follows from Proposition \ref{prop:adm_1}. We leave it to the reader to verify that the differentials of the above complexes are compatible with the isomorphism we have just established. This is tedious, but standard.
Finally, recall that for $M',N'\in \cat{C}(A)$ there is an isomorphism \[\Hom_{\cat{C}(A)}(M',N') \cong \operatorname{Z}^{0}(\Hom_{A}(M',N')). \qedhere \] 
\end{proof}

\begin{corollary}\label{cor:71}
Let $M\in \cat{K}(A)$, $N\in \cat{K}(B)$ and $L\in \overline{\cat{K}}(B\otimes A^{{\rm op}})$. Then, there is a natural isomorphism of modules:
\[
\Hom_{\cat{K}(B)}(\nu_{B}(L)\otimes_{A}M,N)\cong \Hom_{\cat{K}(A)}\big(M, \nu_{A}\big(\Hom_{B}(\nu_{B}(L),N)\big)\big).
\]
\end{corollary}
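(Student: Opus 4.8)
The plan is to deduce Corollary \ref{cor:71} from Proposition \ref{prop:adm_1_der} by passing from the category of complexes to the homotopy category, exactly as one passes from $\Hom_{\cat{C}}$ to $\Hom_{\cat{K}}$ in the unital setting. Recall that $\Hom_{\cat{K}(A)}(M',N') \cong \operatorname{H}^{0}(\Hom_{A}(M',N'))$ for $M', N' \in \cat{K}(A)$, whereas $\Hom_{\cat{C}(A)}(M',N') \cong \operatorname{Z}^{0}(\Hom_{A}(M',N'))$. So the second isomorphism in Proposition \ref{prop:adm_1_der} is the degree-$0$ cocycle part of the isomorphism of complexes asserted there; taking $\operatorname{H}^{0}$ of that same isomorphism of complexes will give the claimed isomorphism on $\Hom_{\cat{K}}$, provided the isomorphism of complexes is genuinely natural (which it is, being built functorially out of the adjunction in Proposition \ref{prop:adm_1}).

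Concretely, first I would invoke the isomorphism of complexes
\[
\Hom_{B}(\nu_{B}(L)\otimes_{A}M,N)\cong \Hom_{A}\big(M, \nu_{A}\big(\Hom_{B}(\nu_{B}(L),N)\big)\big)
\]
from Proposition \ref{prop:adm_1_der}, which holds verbatim for $M \in \cat{C}(A)$, $N \in \cat{C}(B)$, $L \in \overline{\cat{C}}(B \otimes A^{\op})$; a complex in $\cat{K}(A)$ is in particular a complex in $\cat{C}(A)$, so this applies to our data. Then I would apply the functor $\operatorname{H}^{0}$ to both sides. On the left this yields $\operatorname{H}^{0}(\Hom_{B}(\nu_{B}(L)\otimes_{A}M,N)) = \Hom_{\cat{K}(B)}(\nu_{B}(L)\otimes_{A}M,N)$, using that $\nu_{B}(L)\otimes_{A}M \in \cat{K}(B)$ and $N \in \cat{K}(B)$; on the right it yields $\operatorname{H}^{0}(\Hom_{A}(M, \nu_{A}(\Hom_{B}(\nu_{B}(L),N)))) = \Hom_{\cat{K}(A)}(M, \nu_{A}(\Hom_{B}(\nu_{B}(L),N)))$, using that $M \in \cat{K}(A)$ and $\nu_{A}(\Hom_{B}(\nu_{B}(L),N)) \in \cat{K}(A)$. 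The naturality of the complex-level isomorphism in $M$, $N$, and $L$ transfers to naturality of the resulting module isomorphism.

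The only genuine point requiring care — and the step I would flag as the main (minor) obstacle — is verifying that the isomorphism of Proposition \ref{prop:adm_1_der} is compatible with the homotopy relation, i.e. that it is an isomorphism of complexes and not merely a degreewise isomorphism compatible with $\operatorname{Z}^{0}$; but this is precisely the content of the differential-compatibility assertion already granted (``left to the reader, tedious but standard'') in the proof of Proposition \ref{prop:adm_1_der}. Once the map commutes with differentials, $\operatorname{H}^{0}$ is a functor on complexes and automatically sends it to an isomorphism, so nothing further is needed. I would therefore keep the proof to two or three sentences: state that the isomorphism of complexes in Proposition \ref{prop:adm_1_der} is natural, apply $\operatorname{H}^{0}$, and identify the two sides with the respective $\Hom_{\cat{K}}$ groups via the standard identification $\Hom_{\cat{K}}(-,-) \cong \operatorname{H}^{0}(\Hom(-,-))$.
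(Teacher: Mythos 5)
Your proposal is correct and matches the paper's proof essentially verbatim: both take the isomorphism of complexes from Proposition \ref{prop:adm_1_der}, apply $\operatorname{H}^{0}$, and identify $\operatorname{H}^{0}(\Hom_{A}(M',N'))$ with $\Hom_{\cat{K}(A)}(M',N')$. The only difference is that you spell out the caveat about the isomorphism being one of complexes (not merely degreewise), which the paper leaves implicit having already addressed it in the proof of Proposition \ref{prop:adm_1_der}.
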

\begin{proof}
Given complexes $M'$ and $N'$ in $\cat{K}(A)$, there is a canonical isomorphism \[\Hom_{\cat{K}(A)}(M',N') \cong \operatorname{H}^{0}(\Hom_{A}(M',N')).\] Using the isomorphism established in Proposition \ref{prop:adm_1_der}, we get a natural isomorphism of modules:
\[
\operatorname{H}^{0}(\Hom_{B}(\nu_{B}(L)\otimes_{A} M,N))=\operatorname{H}^{0}(\Hom_{A}\big(M, \nu_{A}\big(\Hom_{B}(\nu_{B}(L),N)\big)\big)).
\]
This establishes the corollary. 
\end{proof}

\begin{lemma} \label{lem:502}
Let $L\in \overline{\cat{C}}(A\otimes B^{\mathrm{op}})$ and $N\in \cat{C}(B)$. Then, there is a natural isomorphism in $\cat{C}(A)$:
\[ \nu_{A}(L)\otimes_{B} N\cong \nu_{A}(L \otimes_{B} N). \]
\end{lemma}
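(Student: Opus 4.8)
The plan is to promote the module-level isomorphism of Lemma~\ref{lem:pre_4} to the level of complexes, degree by degree, and then check compatibility with differentials. First I would write out the definition of each side in a fixed cohomological degree $k$. By the definition of the tensor product of complexes,
\[
(\nu_{A}(L)\otimes_{B} N)^{k}=\bigoplus_{i+j=k}\nu_{A}(L)^{i}\otimes_{B}N^{j}=\bigoplus_{i+j=k}\nu_{A}(L^{i})\otimes_{B}N^{j},
\]
where the last equality uses that $\nu_{A}$ is applied termwise. On the other hand,
\[
\nu_{A}(L\otimes_{B}N)^{k}=\nu_{A}\bigl((L\otimes_{B}N)^{k}\bigr)=\nu_{A}\Bigl(\bigoplus_{i+j=k}L^{i}\otimes_{B}N^{j}\Bigr).
\]
Since $\nu_{A}$ commutes with arbitrary direct sums (Lemma~\ref{lem:101}), the right-hand side is $\bigoplus_{i+j=k}\nu_{A}(L^{i}\otimes_{B}N^{j})$. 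Now Lemma~\ref{lem:pre_4}, applied with the bimodule $L^{i}\in\overline{\cat{M}}(A\otimes B^{\op})$ and the non-degenerate module $N^{j}\in\cat{M}(B)$, gives a natural isomorphism $\nu_{A}(L^{i})\otimes_{B}N^{j}\cong\nu_{A}(L^{i}\otimes_{B}N^{j})$ for each pair $(i,j)$. Taking the direct sum of these over $i+j=k$ produces an isomorphism in degree $k$; concretely it is the map induced by $l\otimes n\mapsto l\otimes n$, i.e. the termwise version of the map $\psi$ appearing in the proof of Lemma~\ref{lem:pre_4}.

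Next I would verify that these degree-wise isomorphisms assemble into a morphism of complexes, i.e. that they commute with the differentials. This is where the only genuine (if routine) work lies. The differential on $\nu_{A}(L)\otimes_{B}N$ is the restriction of the differential on $L\otimes_{B}N$, since $\nu_{A}(L)$ is a subcomplex of $L$ (each $\nu_{A}(L^{i})$ is an $A$-submodule of $L^{i}$, and the differential of $L$ preserves these submodules because it is $A$-linear). Likewise, the differential on $\nu_{A}(L\otimes_{B}N)$ is the restriction of the differential on $L\otimes_{B}N$. Under the inclusions into $L\otimes_{B}N$, both sides of our claimed isomorphism map isomorphically onto the \emph{same} subcomplex of $L\otimes_{B}N$, namely $\nu_{A}(L\otimes_{B}N)$ viewed inside $L\otimes_{B}N$: indeed, the image of $\nu_{A}(L)\otimes_{B}N$ under the canonical map to $L\otimes_{B}N$ was already shown in the proof of Lemma~\ref{lem:pre_4} to be precisely $\nu_{A}(L\otimes_{B}N)$. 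Hence the degree-wise isomorphisms are simply the identification of two presentations of one subcomplex, and compatibility with differentials is automatic.

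Finally I would record naturality: in both $L$ and $N$ the construction is built from the natural maps $\psi$ of Lemma~\ref{lem:pre_4} and the natural isomorphism $\nu_{A}(\bigoplus)\cong\bigoplus\nu_{A}$ of Lemma~\ref{lem:101}, each of which is natural, so the composite is natural in $L\in\overline{\cat{C}}(A\otimes B^{\op})$ and $N\in\cat{C}(B)$. The main obstacle, such as it is, is purely bookkeeping: keeping the Koszul signs in the differential of a tensor product of complexes straight while checking compatibility. This is entirely parallel to the sign-checking deferred to the reader in the proof of Proposition~\ref{prop:adm_1_der}, so I would state it as ``standard'' and, at most, indicate that the differentials match because both sides are subcomplexes of $L\otimes_{B}N$ on which the differentials agree by construction.
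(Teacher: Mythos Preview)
Your proposal is correct and follows precisely the approach indicated in the paper: the paper's proof consists of a single sentence invoking Lemma~\ref{lem:pre_4} together with the fact that $\nu_{A}$ commutes with arbitrary direct sums (Lemma~\ref{lem:101}), and you have simply spelled out these details degree by degree, including the routine check of compatibility with differentials.
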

\begin{proof}
The proof of this lemma  follows from Lemma \ref{lem:pre_4} and the fact that $\nu_{A}$ commutes with arbitrary direct sums (Lemma \ref{lem:101}).
\end{proof}

\begin{lemma}\label{lem:102}
Let $L\in \overline{\cat{D}}(A\otimes B^{\mathrm{op}})$ and $N\in \cat{D}(B)$. Then, there is a natural isomorphism in $\cat{D}(A)$:
\[ \nu_{A}(L)\otimes_{B}^{\mathrm{L}} N\cong \nu_{A}(L \otimes_{B}^{\mathrm{L}} N). \]
\end{lemma}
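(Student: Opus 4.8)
The plan is to deduce the derived statement from its underived counterpart, Lemma~\ref{lem:502}, by working with suitable resolutions. Recall that $\ot_B^{\mathrm L}$ is computed by replacing the right-hand argument by a K-projective complex (over $B$), and that by Proposition~\ref{prop:501}(ii) such a resolution exists in $\cat{C}(B)$; moreover, by Lemma~\ref{cor:201}, a complex that is K-projective in $\cat{K}(B)$ is also K-projective in $\overline{\cat{K}}(B)$, so the same resolution computes $\ot_B^{\mathrm L}$ whether we view everything in $\cat D(B)$ or $\overline{\cat D}(B)$. So first I would fix a K-projective resolution $p\colon Q \xrightarrow{\ \sim\ } N$ with $Q\in\cat C(B)$ K-projective.

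Next I would argue that $L\ot_B Q$ computes $L\ot_B^{\mathrm L} N$ in $\overline{\cat D}(A\ot B^{\op})$, and likewise $\nu_A(L)\ot_B Q$ computes $\nu_A(L)\ot_B^{\mathrm L} N$ in $\cat D(A)$ — here one must be slightly careful, since $L$ is only a complex of possibly-degenerate $A\ot B^{\op}$-modules and $\nu_A(L)$ a complex of non-degenerate ones, but in both cases the relevant tensor product over $B$ with a K-projective complex $Q$ computes the derived functor, because $-\ot_B Q$ preserves acyclicity (a standard consequence of $Q$ being K-projective over $B$, applied in $\overline{\cat C}$ and then, via $\nu_A$'s exactness, in $\cat C(A)$). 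Then by Lemma~\ref{lem:502} applied with $N$ replaced by $Q$, there is a natural isomorphism of complexes
\[
\nu_A(L)\ot_B Q \;\cong\; \nu_A(L\ot_B Q)
\]
in $\cat C(A)$. Passing to $\cat D(A)$, the left side is $\nu_A(L)\ot_B^{\mathrm L} N$ and the right side is $\nu_A$ applied (at the level of derived categories, using that $\nu_A$ is exact and hence descends) to $L\ot_B Q \cong L\ot_B^{\mathrm L} N$ in $\overline{\cat D}(A\ot B^{\op})$; that is, $\nu_A(L\ot_B^{\mathrm L} N)$. This yields the claimed isomorphism, and naturality follows from naturality of the isomorphism in Lemma~\ref{lem:502} together with functoriality of the resolution.

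The main obstacle I anticipate is the bookkeeping needed to justify that $-\ot_B Q$ computes the derived tensor product on both sides simultaneously and compatibly: one needs that $\nu_A$ commutes with $-\ot_B Q$ at the chain level (this is exactly Lemma~\ref{lem:502}), that $\nu_A$ sends quasi-isomorphisms to quasi-isomorphisms so that it is well-defined on derived categories (this follows from Lemma~\ref{lem:101}, since an exact functor preserves acyclicity of complexes and hence quasi-isomorphisms), and that the derived functor $\nu_A\colon\overline{\cat D}(A\ot B^{\op})\to\cat D(A)$ fits with the derived functor $\nu_A\colon\overline{\cat D}(A)\to\cat D(A)$ through the forgetful functor to $A$-complexes — all of which are true but require a careful statement. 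A secondary, purely formal point is checking that the isomorphism is independent of the chosen K-projective resolution $Q$, which is the usual argument: any two such resolutions are connected by a quasi-isomorphism over which everything in sight is natural, and the functors involved (tensoring with a K-projective, and $\nu_A$) preserve quasi-isomorphisms. None of this is deep, but it is where the care is needed.
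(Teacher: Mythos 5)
Your proposal is correct and takes essentially the same approach as the paper: both proofs choose a K-projective resolution of $N$, observe that tensoring with it computes $\otimes_B^{\mathrm L}$ on both sides, and then invoke the chain-level identity of Lemma~\ref{lem:502}. Your write-up just makes explicit the bookkeeping (Lemma~\ref{cor:201} and the exactness of $\nu_A$) that the paper leaves implicit.
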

\begin{proof}
Given $N\in \cat{D}(B)$ choose a K-projective resolution $P\to N$. Then, $L\otimes_{B}^{\mathrm{L}} N \cong L\otimes_{B} P$, and $\nu_{A}(L)\otimes_{B}^{\mathrm{L}} N\cong \nu_{A}(L)\otimes_{B} P$. Now use Lemma \ref{lem:502}.
\end{proof}

\begin{lemma} \label{lem:703}
Let $M\in \overline{\cat{K}}(A)$. Then, the following are equivalent:
\begin{enumerate}
\rmitem{a.} $M\in \cat{K}(A)$.
\rmitem{b.} $A\otimes_{A} M \cong M$.
\rmitem{c.} $\nu_{A}(\mathrm{Hom}_{A}(A,M))\cong M$. 
\end{enumerate}
\end{lemma}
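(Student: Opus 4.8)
The plan is to deduce this statement from its module-level analogue, Lemma \ref{lem:pre_3}, applied separately in each cohomological degree. The observation that makes such a reduction legitimate is that $A$, viewed as a complex, is concentrated in degree $0$: hence for any $M\in\overline{\cat{C}}(A)$ both constructions appearing in the statement are computed termwise, with $(A\otimes_{A}M)^{k}=A\otimes_{A}M^{k}$ and $\nu_{A}(\Hom_{A}(A,M))^{k}=\nu_{A}(\Hom_{A}(A,M^{k}))$, and --- because $\partial_{A}=0$ --- the differentials of these two complexes are simply the ones induced termwise by $\partial_{M}$, with no intervening signs. So I would begin by recording this, which shows that each of (a.), (b.), (c.) is a condition that can be checked one degree at a time.

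For the implications (a.)$\Rightarrow$(b.) and (a.)$\Rightarrow$(c.): assuming $M\in\cat{K}(A)$, every $M^{k}$ lies in $\cat{M}(A)$, so Lemma \ref{lem:pre_3} provides, in each degree, an isomorphism $M^{k}\cong A\otimes_{A}M^{k}$ (realized by the map $\iota_{M^{k}}$ of Lemma \ref{lem:pre_0}) and an isomorphism $\nu_{A}(\Hom_{A}(A,M^{k}))\cong M^{k}$ (realized by the map $T$ from the proof of Lemma \ref{lem:pre_3}). The one thing to verify is that these maps are natural in the module argument, so that they commute with the differentials $\partial_{M}^{k}$ and hence assemble into isomorphisms of complexes $M\cong A\otimes_{A}M$ and $\nu_{A}(\Hom_{A}(A,M))\cong M$; this naturality is immediate from the explicit formulas $\iota(m)=e\otimes m$ (for any $e$ with $em=m$) and $T(f)=f(e_{f})$.

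For the reverse implications I would run the degreewise equivalences of Lemma \ref{lem:pre_3} backwards: an isomorphism of complexes $A\otimes_{A}M\cong M$ is in particular an isomorphism $A\otimes_{A}M^{k}\cong M^{k}$ in each degree, which forces $M^{k}\in\cat{M}(A)$ and hence $M\in\cat{K}(A)$; the implication (c.)$\Rightarrow$(a.) is identical, using $\nu_{A}(\Hom_{A}(A,M))^{k}=\nu_{A}(\Hom_{A}(A,M^{k}))$.

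I do not anticipate a real obstacle here. The only step needing a little care --- and it is the exact analogue of a point already implicit in the proof of Lemma \ref{lem:pre_3} --- is checking that the comparison maps $\iota$ and $T$ are natural, so that they are genuine chain maps and not merely degreewise isomorphisms; but, as noted, this is routine and reads off directly from the definitions, exactly as in the unital setting.
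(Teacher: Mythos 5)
Your proof is correct and follows exactly the route the paper intends: the paper's proof of this lemma is the single sentence ``This is straightforward, using Lemma \ref{lem:pre_3},'' and what you have written --- reducing to the module-level statement degree by degree, using that $A$ sits in degree $0$ so both $A\otimes_A(-)$ and $\nu_A(\Hom_A(A,-))$ are computed termwise with no sign complications, and then checking naturality of the comparison maps $\iota$ and $T$ so that the degreewise isomorphisms assemble into chain maps --- is precisely the argument the authors are compressing into that one line.
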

\begin{proof}
This is straightforward, using Lemma \ref{lem:pre_3}.
\end{proof}

\begin{lemma}\label{lem:103}
Let $M\in \overline{\cat{D}}(A)$. Then, the following are equivalent:
\begin{enumerate}
\rmitem{a.} $M\in \cat{D}(A)$.
\rmitem{b.} $A\otimes_{A}^{\mathrm{L}} M \cong M$.
\rmitem{c.} $\nu_{A}(\RHom_{A}(A,M))\cong M$. 
\end{enumerate}
\end{lemma}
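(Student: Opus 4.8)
The plan is to deduce this derived-category statement from the already-established analogues at the level of modules (Lemma~\ref{lem:pre_3}) and homotopy categories (Lemma~\ref{lem:703}), by passing to appropriate resolutions. The key point is that all three conditions are invariant under quasi-isomorphism, and that for a well-chosen resolution the derived operations $A\otimes^{\mathrm{L}}_{A}(-)$ and $\RHom_{A}(A,-)$ reduce to their underived counterparts.

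First I would prove (a.) $\Rightarrow$ (b.) and (a.) $\Rightarrow$ (c.). Suppose $M\in \cat{D}(A)$. Using Proposition~\ref{prop:501}, choose a K-projective complex $P\in\cat{K}(A)$ with a quasi-isomorphism $P\to M$; viewing $P$ in $\overline{\cat{K}}(A)$ it remains K-projective by Lemma~\ref{cor:201}(i), so $A\otimes^{\mathrm{L}}_{A}M\cong A\otimes_{A}P$ in $\overline{\cat{D}}(A)$. Since $P\in\cat{K}(A)$, Lemma~\ref{lem:703} gives $A\otimes_{A}P\cong P$, hence $A\otimes^{\mathrm{L}}_{A}M\cong P\cong M$, proving (b.). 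Dually, choose via Proposition~\ref{prop:501} a K-injective $I\in\cat{K}(A)$ with $M\to I$ a quasi-isomorphism; then $\RHom_{A}(A,M)\cong\Hom_{A}(A,I)$ in $\overline{\cat{D}}(A)$, and since $I\in\cat{K}(A)$, Lemma~\ref{lem:703}(c.) gives $\nu_{A}(\Hom_{A}(A,I))\cong I\cong M$, which is (c.). (Here one uses, as recorded in the excerpt after Lemma~\ref{cor:201}, that $\nu_A$ on $\overline{\cat D}(A)$ is right adjoint to the inclusion $\cat D(A)\hookrightarrow\overline{\cat D}(A)$, so applying it to a quasi-isomorphism in $\cat D(A)$ changes nothing.)

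For the converses, (b.) $\Rightarrow$ (a.) and (c.) $\Rightarrow$ (a.), the observation is simply that $A\otimes^{\mathrm{L}}_{A}M$ and $\nu_{A}(\RHom_{A}(A,M))$ are always objects of $\cat{D}(A)$ — the first because $A\otimes_{A}P$ lands in $\cat{C}(A)$ for $P\in\cat{C}(A)$ (each term $A\otimes_A P^i$ is non-degenerate, e.g. by Lemma~\ref{lem:pre_5}(iii)), the second because $\nu_{A}$ takes values in $\cat{D}(A)$ by construction. So an isomorphism in $\overline{\cat{D}}(A)$ between $M$ and such an object forces $M$ to lie in $\cat{D}(A)$, using that $\cat{D}(A)\to\overline{\cat{D}}(A)$ is fully faithful (hence closed under isomorphism of objects in the essential image).

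I do not expect any serious obstacle here; the content has been front-loaded into Proposition~\ref{prop:501}, Lemma~\ref{cor:201}, and Lemma~\ref{lem:703}. The one point requiring mild care is bookkeeping about \emph{which} derived category each isomorphism lives in: one must check that replacing $M$ by $P$ (resp.\ $I$) is legitimate for computing $A\otimes^{\mathrm{L}}_{A}(-)$ (resp.\ $\RHom_{A}(A,-)$) in $\overline{\cat{D}}(A)$ rather than $\cat{D}(A)$, which is exactly the content of the sentence following Lemma~\ref{cor:201} ("the construction of $\ot^{\mathrm L}_A$ and $\RHom_A$ is independent of whether we are constructing them in $\cat D(A)$ or $\overline{\cat D}(A)$"). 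Once that is invoked, the proof is a short chain of the three displayed isomorphisms.
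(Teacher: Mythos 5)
Your proof follows essentially the same route as the paper's: reduce (a.)\,$\Rightarrow$\,(b.) to Lemma~\ref{lem:703} via a K-projective resolution in $\cat{K}(A)$ (using Lemma~\ref{cor:201}(i) for legitimacy in $\overline{\cat{D}}(A)$), reduce (a.)\,$\Rightarrow$\,(c.) to Lemma~\ref{lem:703} via K-injective resolutions, and dispose of the converses by observing that the two constructions always land in $\cat{D}(A)$. The only cosmetic difference is in (c.): the paper starts from a K-injective $I$ in $\overline{\cat{K}}(A)$ and passes to $\nu_A(I)$ using Lemma~\ref{cor:201}(ii), whereas you start from a K-injective $I$ in $\cat{K}(A)$ and invoke the remark that $\RHom_A$ can be computed there — which is the same fact, just quoted at a different point.
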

\begin{proof}
Suppose $M\in \cat{D}(A)$. Let $P\to M$ be a K-projective resolution of $M$ in $\cat{K}(A)$. 

We wish to show (b.). Using Lemma \ref{lem:703}, we conclude that $A\otimes_{A} P \cong P$. However, this precisely says that $A\otimes_{A}^{\mathrm{L}} M\cong M$. 

We now come to (c.). Let $M\to I$ be a K-injective resolution of $M$ in $\overline{\cat{K}}(A)$; by Lemma \ref{cor:201}, $\nu_{A}(I)$ is a K-injective resolution of $M$ in $\cat{K}(A)$. Thus we have an isomorphism $\nu_{A}(\RHom_{A}(A,M))\cong \nu_{A}(\Hom_{A}(A,\nu_{A}(I)))$. We now use Lemma \ref{lem:703} to conclude that  $\nu_{A}(\Hom_{A}(A,\nu_{A}(I)))\cong \nu_{A}(I)\cong M$. This proves (c.).

If either (b.) or (c.) hold, it is obvious that $M\in \cat{D}(A)$.
\end{proof}

\begin{prop}\label{prop:adm_1_der2}
Let $M\in \cat{D}(A)$, $N\in \cat{D}(B)$ and $L\in \overline{\cat{D}}(B\otimes A^{{\rm op}})$. Then, there is a natural isomorphism in $\cat{D}(\mathbb{K})$:
\[
\RHom_{B}(\nu_{B}(L)\otimes_{A}^{{\rm L}} M,N)\cong \RHom_{A}\big(M, \nu_{A}\big(\RHom_{B}(\nu_{B}(L),N)\big)\big).
\]
\end{prop}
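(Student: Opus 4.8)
The plan is to reduce the derived statement to the complex-level adjunction of Proposition \ref{prop:adm_1_der} by replacing $M$, $N$, and $L$ with suitable resolutions, and then to verify that these replacements are compatible with the functors appearing on both sides. Concretely, I would first choose a K-projective resolution $P \xrightarrow{\sim} M$ in $\cat{K}(A)$ and a K-injective resolution $N \xrightarrow{\sim} I$ in $\cat{K}(B)$ (both exist by Proposition \ref{prop:501}). The left-hand side then becomes $\RHom_{B}(\nu_{B}(L) \otimes_{A}^{\mathrm{L}} M, N) \cong \Hom_{B}(\nu_{B}(L) \otimes_{A} P, I)$, provided one checks that $\nu_{B}(L) \otimes_{A} P$ computes the left derived tensor product (it does, since $P$ is K-projective over $A$) and that $\Hom_{B}(-, I)$ computes $\RHom_{B}$ out of it (it does, since $I$ is K-injective over $B$). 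Similarly, the right-hand side should become $\RHom_{A}(M, \nu_{A}(\RHom_{B}(\nu_{B}(L), N)))$, and here I would want to identify $\nu_{A}(\RHom_{B}(\nu_{B}(L), N))$ with $\nu_{A}(\Hom_{B}(\nu_{B}(L), I))$, and then $\RHom_{A}(M, -)$ applied to that with $\Hom_{A}(P, \nu_{A}(\Hom_{B}(\nu_{B}(L), I)))$.

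Once both sides are rewritten in terms of $P$ and $I$, the complex-level isomorphism of Proposition \ref{prop:adm_1_der} (applied with $M = P$, $N = I$) gives
\[
\Hom_{B}(\nu_{B}(L) \otimes_{A} P, I) \cong \Hom_{A}\big(P, \nu_{A}\big(\Hom_{B}(\nu_{B}(L), I)\big)\big),
\]
which is exactly the desired identity in $\cat{D}(\mathbb{K})$ after passing through the comparisons above. Naturality should follow from the naturality in Proposition \ref{prop:adm_1_der} together with the functoriality of the chosen resolutions (in the derived category, resolutions are functorial up to canonical isomorphism, which suffices for a natural isomorphism of functors to $\cat{D}(\mathbb{K})$).

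The step I expect to be the main obstacle is justifying that $\nu_{A}(\Hom_{B}(\nu_{B}(L), I))$ actually computes $\nu_{A}(\RHom_{B}(\nu_{B}(L), N))$ \emph{and} that applying $\Hom_{A}(P, -)$ to it computes $\RHom_{A}(M, -)$. For the first part, $I$ is K-injective over $B$, so $\Hom_{B}(\nu_{B}(L), I)$ does compute $\RHom_{B}(\nu_{B}(L), N)$ in $\overline{\cat{D}}(B)$ — but one needs $\nu_{A}$ (which is exact by Lemma \ref{lem:101}, hence descends to derived categories) to be applied correctly, and one should check via Lemma \ref{cor:201} that the relevant complexes are appropriately K-injective or that $\nu_A$ of a K-injective is what is needed. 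For the second part, the subtlety is that $\nu_{A}(\Hom_{B}(\nu_{B}(L), I))$ need not be K-injective over $A$, so I cannot directly conclude $\Hom_{A}(P, -)$ computes $\RHom_A(M,-)$ from injectivity of the target; instead I would use that $P$ is K-projective over $A$, which guarantees $\Hom_{A}(P, -)$ sends quasi-isomorphisms to quasi-isomorphisms and hence computes $\RHom_{A}(M, -)$ regardless of the target. Assembling these comparisons carefully — keeping track of which resolution is used for which derived functor, and invoking Lemma \ref{cor:201} to move K-projectivity and K-injectivity between $\cat{K}$ and $\overline{\cat{K}}$ — is the technical heart of the argument; the rest is the formal manipulation already carried out at the non-derived level.
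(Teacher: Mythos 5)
Your proposal is correct and follows essentially the same route as the paper's own proof: pick a K-projective resolution $P\to M$ in $\cat{K}(A)$ and a K-injective resolution $N\to I$ in $\cat{K}(B)$, apply Proposition~\ref{prop:adm_1_der} to $P$, $I$, $L$, and observe that the resulting complex-level isomorphism computes both derived functors. The paper states this more tersely and leaves implicit the justifications you spell out (that $P$ K-projective over $A$ makes $\Hom_A(P,-)$ compute $\RHom_A(M,-)$ regardless of the target, etc.), but the argument is the same.
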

\begin{proof}
Let $P\to M$ be a K-projective resolution of $M$ in $\cat{K}(A)$. Similarly, consider a K-injective resolution $N\to I$ of $N$ in $\cat{K}(B)$. Applying Proposition \ref{prop:adm_1_der} to $P$, $I$, and $L$, we have an isomorphism of complexes \[
\Hom_{B}(\nu_{B}(L)\otimes_{A}P,I)\cong \Hom_{A}\big(P, \nu_{A}\big(\Hom_{B}(\nu_{B}(L),I)\big)\big).
\]
This precisely says that \[
\RHom_{B}(\nu_{B}(L)\otimes_{A}^{{\rm L}} M,N)\cong \RHom_{A}\big(M, \nu_{A}\big(\RHom_{B}(\nu_{B}(L),N)\big)\big). \qedhere\]
\end{proof}

\begin{corollary} \label{cor:81}
Let $M\in \cat{D}(A)$, $N\in \cat{D}(B)$ and $L\in \overline{\cat{D}}(B\otimes A^{{\rm op}})$. Then, there is a natural isomorphism of modules:
\[
\Hom_{\cat{D}(B)}(\nu_{B}(L)\otimes_{A}^{{\rm L}} M,N)\cong \Hom_{\cat{D}(A)}\big(M, \nu_{A}\big(\RHom_{B}(\nu_{B}(L),N)\big)\big).
\]
\end{corollary}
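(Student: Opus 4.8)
The plan is to deduce Corollary \ref{cor:81} from Proposition \ref{prop:adm_1_der2} by passing from the derived Hom-\emph{complex} to its zeroth cohomology. Recall that for any two objects $X, Y \in \cat{D}(\mathbb{K})$ one has a canonical identification $\Hom_{\cat{D}(\mathbb{K})}(\mathbb{K}, \RHom(X,Y)) \cong \Hom_{\cat{D}(\mathbb{K})}(X,Y)$, or more simply that $\operatorname{H}^{0}$ of the object $\RHom_{\cat{C}}(X,Y) \in \cat{D}(\mathbb{K})$ computes the morphisms $\Hom_{\cat{D}(\cat{C})}(X,Y)$. Applying $\operatorname{H}^{0}$ to both sides of the isomorphism in $\cat{D}(\mathbb{K})$ furnished by Proposition \ref{prop:adm_1_der2} therefore yields precisely the claimed isomorphism of modules, naturally in all three variables.

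Concretely, I would first fix a K-projective resolution $P \to M$ in $\cat{K}(A)$ and a K-injective resolution $N \to I$ in $\cat{K}(B)$, exactly as in the proof of Proposition \ref{prop:adm_1_der2}, so that the left-hand side is represented by the honest complex $\Hom_{B}(\nu_{B}(L) \otimes_{A} P, I)$ and the right-hand side by $\Hom_{A}\big(P, \nu_{A}(\Hom_{B}(\nu_{B}(L), I))\big)$. I would then invoke the standard fact (used already in the proof of Corollary \ref{cor:71}) that for $X', Y' \in \cat{K}(A)$ one has $\Hom_{\cat{K}(A)}(X',Y') \cong \operatorname{H}^{0}(\Hom_{A}(X',Y'))$, together with the observation that a K-projective resolution on the source and a K-injective resolution on the target compute morphisms in the derived category: $\Hom_{\cat{D}(B)}(\nu_{B}(L) \otimes_{A}^{\mathrm{L}} M, N) \cong \Hom_{\cat{K}(B)}(\nu_{B}(L) \otimes_{A} P, I)$, and similarly on the $A$-side, noting that $\nu_{A}(\Hom_{B}(\nu_{B}(L), I))$ is K-injective in $\cat{K}(A)$ by Lemma \ref{cor:201}(ii) since $\Hom_{B}(\nu_{B}(L), I)$ is K-injective in $\overline{\cat{K}}(A)$ (as $I$ is K-injective in $\cat{K}(B)$).

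The only genuine point requiring a moment's care — and the step I expect to be the main obstacle — is checking that the right-hand complex $\Hom_{A}\big(P, \nu_{A}(\RHom_{B}(\nu_{B}(L), N))\big)$ really does compute $\Hom_{\cat{D}(A)}\big(M, \nu_{A}(\RHom_{B}(\nu_{B}(L), N))\big)$: this needs $P$ to be K-projective in $\cat{K}(A)$ (which it is, by construction) \emph{and} needs $\nu_{A}(\Hom_{B}(\nu_{B}(L), I))$ to be a representative of $\nu_{A}(\RHom_{B}(\nu_{B}(L), N))$ that is adequate for computing derived-category Hom out of $M$. Since it is K-injective in $\cat{K}(A)$ by the argument above, a K-projective resolution on one side alone already suffices, so no delicate double-resolution bookkeeping is needed. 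I would then simply take $\operatorname{H}^{0}$ of the complex isomorphism of Proposition \ref{prop:adm_1_der2}; naturality in $M$, $N$, and $L$ is inherited from the naturality asserted there, so the proof is essentially a one-line invocation once these identifications are in place.
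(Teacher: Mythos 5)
Your proof is correct and follows the same route as the paper: apply $\operatorname{H}^{0}$ to the isomorphism of Proposition \ref{prop:adm_1_der2}, using the standard identification $\Hom_{\cat{D}(A)}(M',N')\cong \operatorname{H}^{0}(\RHom_{A}(M',N'))$. The additional checks you carry out (the K-injectivity of $\nu_{A}(\Hom_{B}(\nu_{B}(L),I))$ via Lemma \ref{cor:201}(ii), so that a one-sided resolution suffices) are valid and simply make explicit the bookkeeping that the paper leaves implicit.
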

\begin{proof}
For complexes $M'$ and $N'$ in $\cat{D}(A)$, there is an isomorphism $\Hom_{\cat{D}(A)}(M',N')$  $\cong \operatorname{H}^{0}(\RHom_{A}(M',N'))$. The corollary now follows from Proposition \ref{prop:adm_1_der2}.
\end{proof}

We now obtain the following adjunctions from Corollary \ref{cor:81}.
\begin{corollary}\label{corr:adm_1_der}
Let $\phi:A\to B$ be a morphism of algebras. Let $M\in \cat{D}(A)$ and $N\in \cat{D}(B)$. Then, there are the following natural identities:
\begin{enumerate}
\rmitem{i} $\Hom_{\cat{D}(A)}(\nu_{A}(N),M)\cong\Hom_{\cat{D}(B)}(N,\nu_{B}(\RHom_{A}(\nu_{A}(B),M)))$.
\rmitem{ii} $\Hom_{\cat{D}(B)}(B\otimes_{A}^{{\rm L}} M,N)\cong\Hom_{\cat{D}(A)}(M,\nu_{A}(N))$.
\end{enumerate}
\end{corollary}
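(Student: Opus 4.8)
The plan is to deduce both parts of Corollary \ref{corr:adm_1_der} from Corollary \ref{cor:81} by making judicious choices of the bimodule $L$, exactly mirroring the way Corollary \ref{corr:adm_1} was deduced from Proposition \ref{prop:adm_1} in the underived setting. For part (i), I would interchange the roles of $A$ and $B$ in Corollary \ref{cor:81} and take $L = B$, regarded as an object of $\overline{\cat{D}}(A \otimes B^{\op})$ via the algebra map $\phi: A \to B$ (so $B$ is an $A$-$B$-bimodule with $A$ acting through $\phi$). Corollary \ref{cor:81} then gives a natural isomorphism
\[
\Hom_{\cat{D}(A)}(\nu_{A}(B) \otimes_{B}^{\mathrm{L}} N, M) \cong \Hom_{\cat{D}(B)}\big(N, \nu_{B}\big(\RHom_{A}(\nu_{A}(B), M)\big)\big),
\]
so it remains to identify the left-hand side with $\Hom_{\cat{D}(A)}(\nu_{A}(N), M)$. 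This reduces to producing a natural isomorphism $\nu_{A}(B) \otimes_{B}^{\mathrm{L}} N \cong \nu_{A}(N)$ in $\cat{D}(A)$. I would get this from Lemma \ref{lem:102} (with the bimodule $B$ in place of $L$), which gives $\nu_{A}(B) \otimes_{B}^{\mathrm{L}} N \cong \nu_{A}(B \otimes_{B}^{\mathrm{L}} N)$, followed by Lemma \ref{lem:103}(b.), which identifies $B \otimes_{B}^{\mathrm{L}} N$ with $N$ since $N \in \cat{D}(B)$. Here $\nu_A$ on the right is understood as the functor $\overline{\cat D}(B)\to\cat D(B)$ post-composed with restriction along $\phi$; I would be slightly careful to note that restriction along $\phi$ commutes with $\otimes^{\mathrm L}_B$ and with $\nu$ appropriately, which is the derived analogue of the bookkeeping already used in Corollary \ref{corr:adm_1}(i).

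For part (ii), I would apply Corollary \ref{cor:81} directly with $L = B$ viewed this time as an object of $\overline{\cat{D}}(B \otimes A^{\op})$, i.e.\ as a $B$-$A$-bimodule with $A$ acting on the right through $\phi$. Since $B$ is non-degenerate as a left $B$-module, $\nu_{B}(B) \cong B$, and Corollary \ref{cor:81} yields
\[
\Hom_{\cat{D}(B)}(B \otimes_{A}^{\mathrm{L}} M, N) \cong \Hom_{\cat{D}(A)}\big(M, \nu_{A}\big(\RHom_{B}(B, N)\big)\big).
\]
It then suffices to observe that $\nu_{A}(\RHom_{B}(B, N)) \cong \nu_{A}(N)$, where the outer $\nu_A$ again incorporates restriction along $\phi$; this is immediate from Lemma \ref{lem:103}(c.), which says $\nu_{B}(\RHom_{B}(B,N)) \cong N$ for $N \in \cat{D}(B)$, together with the compatibility of $\nu$ with restriction of scalars. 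This is the exact derived counterpart of the deduction of Corollary \ref{corr:adm_1}(ii) from Proposition \ref{prop:adm_1}.

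I do not expect any serious obstacle: every ingredient — Corollary \ref{cor:81}, Lemma \ref{lem:102}, and Lemma \ref{lem:103} — is already in hand, and the argument is formally identical to the module-category version, with $\otimes^{\mathrm{L}}$ and $\RHom$ replacing $\otimes$ and $\Hom$. The one point that requires a little care, and which I would state explicitly rather than leave to the reader, is that viewing $B$ as a bimodule via $\phi$ is legitimate in the non-degenerate derived setting (it is non-degenerate as a $B$-module but only non-degenerate on the $B$-side as a bimodule, which is why $L$ is only required to lie in $\overline{\cat{D}}$ and not $\cat{D}$ in Corollary \ref{cor:81}), and that the functor $\nu_A$ appearing on the right-hand sides silently includes the restriction functor $\overline{\cat D}(B)\to\overline{\cat D}(A)$ along $\phi$ before applying $\nu$. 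Once this identification of $L$ is pinned down, both isomorphisms drop out by substitution, so I would present the proof as: "We prove (i) \ldots We prove (ii) \ldots", each being two or three lines invoking the cited results.
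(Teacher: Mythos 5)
Your proof is correct and follows essentially the same route as the paper: both parts are obtained by specializing Corollary \ref{cor:81} to $L=B$ with the bimodule structure induced by $\phi$, and then simplifying the resulting derived tensor and $\RHom$ terms via Lemma \ref{lem:102} and Lemma \ref{lem:103}. Your version is in fact slightly more careful in two places — you correctly place $B$ in $\overline{\cat{D}}(A\otimes B^{\op})$ when the roles of $A$ and $B$ are swapped for part (i) (the paper's text says $\overline{\cat{D}}(B\otimes A^{\op})$ there, which appears to be a typo), and your direct observation that $\nu_B(B)=B$ since $B$ is idempotented is a clean shortcut compared with the paper's appeal to Lemma \ref{lem:102}/\ref{lem:103} for that identification.
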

\begin{proof}
We first prove (ii). Consider $B$ as an object in $\overline{\cat{D}}(B\otimes A^{{\rm op}}).$ From Corollary \ref{cor:81}, we have an isomorphism 
\[ \Hom_{\cat{D}(B)}(\nu_{B}(B)\otimes_{A}^{{\rm L}} M,N)\cong \Hom_{\cat{D}(A)}\big(M, \nu_{A}\big(\RHom_{B}(\nu_{B}(B),N)\big)\big).\] By Lemma \ref{lem:102} and Lemma \ref{lem:103}, there is an isomorphism $\nu_{B}(B)\otimes_{A}^{\mathrm{L}} M \cong B\otimes_{A}^{\mathrm{L}} M.$ Similarly, $\nu_{B}(\RHom_{B}(\nu_{B}(B),N)) \cong N$. Thus (ii) follows. 

For the proof of (i), we need to invert the roles of $A$ and $B$, and consider $B$ as an object in $\overline{\cat{D}}(B\otimes A^{{\rm op}}).$ Thus, we have an isomorphism 
\[ \Hom_{\cat{D}(A)}(\nu_{A}(B)\otimes_{B}^{{\rm L}} N,M)\cong \Hom_{\cat{D}(B)}\big(N, \nu_{B}\big(\RHom_{A}(\nu_{A}(B),M)\big)\big).\] To prove (i), all we need to show now is that $\nu_{A}(B)\otimes_{B}^{\mathrm{L}} N \cong \nu_{A}(N)$; this follows immediately from Lemma \ref{lem:102} and Lemma \ref{lem:103}.
\end{proof}

\section{Special idempotents, split idempotents and categorical decompositions}\label{s:spc_idm}\
Before moving on to the results in this section, we survey a concrete example coming from the theory of complex representations of $p$-adic groups. This example is purely for context; a reader not familiar with the subject can skip it entirely.   
\subsection{The case of reductive groups over non-archimedean local fields}\label{ss:red_grp}
Let $F$ be a non-archimedean local field and let $G$ denote the topological group comprising of $F$-rational points of a connected reductive algebraic group defined over it. Consider the collection of all pairs $(M,\sigma)$ where $M$ is an $F$-Levi subgroup of $G$ and $\sigma$ is a supercuspidal representation of $M$. Define an equivalence relation on the collection of such pairs by deeming two such pairs $(M_{1},\sigma_{1})$ and $(M_{2},\sigma_{2})$ to be equivalent if there exists an element $g\in G$ and unramified quasi-character $\chi$ of $M_{2}$ such that
\[
M_{2}=g^{-1}M_{1}g\ {\rm and} \ \sigma_{2}=\sigma_{1}^{g}\otimes \chi,
\]
where $\sigma_{1}^{g}$ is the representation of $M_{2}$ given by composition of $\sigma_{1}$ by the $g$-conjugation map. The equivalence classes of this relation are known as {\it inertial supports}, and the set of all inertial supports for $G$ is denoted by $\mathfrak B(G)$.

Let $\mathfrak R(G)$ denote the category of smooth complex representations of $G$. Bernstein provided a direct product decomposition of this abelian category (see \cite{Be}, \cite{Ber}) into full subcategories indexed by inertial supports, that is, 
\[
\mathfrak R(G)=\prod_{\mathfrak s\in \mathfrak B(G)}\mathfrak R^{\mathfrak s}(G).
\]

Let $\mathcal H(G)$ denote the Hecke algebra of $G$ consisting of all locally constant, compactly supported, complex valued functions on $G$. It is well known that the category of non-degenerate left modules over $\mathcal H(G)$ is equivalent to $\mathfrak R(G)$. For an idempotent $e\in \mathcal H(G)$, set $\mathfrak R_{e}(G)$ to be the full sub-category of $\mathfrak R(G)$ whose objects are those $V\in \mathfrak R(G)$ which satisfy the condition that  $V=\mathcal H(G) e V,$ i.e. $V$ is generated as a $G$-module by its subspace of $e$ fixed vectors. It is shown in \cite[Proposition 3.3]{BK} that the category $\mathfrak R_{e}(G)$ is closed relative to its $G$-subquotients if and only if it is naturally equivalent to the category of left modules over the unital ring $e\mathcal H(G)e$. The idempotents $e\in \mathcal H(G)$ which satisfy this aforementioned property (i.e. the category $\mathfrak R_{e}(G)$ is closed relative to its $G$-subquotients) are known as {\it special} idempotents. An example of a special idempotent is the characteristic function of an Iwahori spherical subgroup of $G$ (see \cite[Lemma 4.8]{B},\cite{Mat}; also c.f. \cite[\S 9.2]{BK}). In this case the algebra $e\mathcal H(G)e$ is the well known Iwahori Hecke algebra.

For any finite subset $\mathfrak S$ of $\mathfrak B(G)$ define $\mathfrak R^{\mathfrak S}(G)=\prod_{\mathfrak s\in \mathfrak S}\mathfrak R^{\mathfrak s}(G)$. It is shown in \cite[Proposition 3.6]{BK} that given any special idempotent $e\in \mathcal H(G)$, there exists a finite subset $\mathfrak S$ of $\mathfrak B(G)$ such that $\mathfrak R_{e}(G)=\mathfrak R^{\mathfrak S}(G)$. Conversely, given a finite subset $\mathfrak S$ of $\mathfrak B(G)$, there exists a special idempotent $e(\mathfrak S)\in \mathcal H(G)$ such that $\mathfrak R^{\mathfrak S}(G)=\mathfrak R_{e(\mathfrak S)}(G)$ (see \cite[Proposition 3.13]{BK}). These results on special idempotents are the basis of the ``theory of types" in the representation theory of $p$-adic groups.  

Recently in \cite[\S 1]{SZ}, some of the above results on special idempotents have been proved for the module category over $\mathcal S(G)$, where $\mathcal S(G)$ denotes the Harish-Chandra Schwartz algebra of $G$.

\subsection{Special idempotents in module category} \label{sim}
 We now extend some notions mentioned in \S \ref{ss:red_grp} to module categories over arbitrary idempotented algebras. 
\begin{lemma} \label{lem:10}
Let $e\in \Ai$. The following two subcategories of $\cat{M}(A)$ coincide:
\begin{enumerate}
 \rmitem{a.} $\{M\in \cat{M}(A)\ \vert \ \forall\  K\lneq H\leq M, e(H/K)\neq 0\}$,
 \rmitem{b.} $\{M\in \cat{M}(A)\ \vert \ \forall\  K\lneq H\leq M, H/K \ \text{simple},\ e(H/K)\neq 0\}$.
\end{enumerate}
\end{lemma}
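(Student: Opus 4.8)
The plan is to prove the equality of the two subcategories by a double inclusion, where one direction is immediate and the other requires a short induction on composition series.

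First I would observe that the inclusion of the category in (a.) into the category in (b.) is trivial: the condition in (a.) quantifies over \emph{all} pairs $K \lneq H \leq M$, while the condition in (b.) restricts to those pairs for which $H/K$ is simple, so (a.) is a priori the stronger condition. Hence any $M$ satisfying (a.) automatically satisfies (b.).

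For the reverse inclusion, suppose $M$ lies in the category (b.), and let $K \lneq H \leq M$ be arbitrary submodules. I want to show $e(H/K) \neq 0$. Since $H/K$ is a nonzero non-degenerate $A$-module, it contains a nonzero cyclic (hence non-degenerate) submodule; so it suffices to treat the case where $H/K$ is cyclic, say generated by the image of some $x \in H$, and then there exists $e' \in \Ai$ with $e'x = x$, so $e'(H/K) \neq 0$, i.e. $H/K$ itself is "touched" by some idempotent. But more to the point: I need a \emph{simple} subquotient inside $H/K$ on which $e$ acts nontrivially. The key point is that any nonzero cyclic module $N = Ax$ over an idempotented algebra has a maximal proper submodule — because $N = Ax$ with $e'x = x$ means proper submodules of $N$ are exactly those not containing $x$, equivalently (since $N$ is generated by $x$) the union of a chain of proper submodules is again proper, so Zorn's lemma produces a maximal proper submodule $N' \lneq N$, and $N/N'$ is simple. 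Applying this with $N = H/K$: after replacing $H$ by a suitable cyclic submodule mapping onto such an $N$, we obtain simple subquotients $H' / K'$ of $M$ (with $K \le K' \lneq H' \le H$) that are simple; by hypothesis (b.), $e(H'/K') \neq 0$, and since $H'/K'$ is a subquotient of $H/K$, this forces $e(H/K) \neq 0$ as well. This completes the reverse inclusion and hence the proof.

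The main obstacle — really the only nontrivial point — is the existence of a simple subquotient inside an arbitrary nonzero subquotient $H/K$ of $M$; i.e. showing that every nonzero non-degenerate $A$-module has a simple subquotient. The subtlety is that $A$ is non-unital, so one cannot quote the unital fact directly; instead one reduces to a nonzero cyclic submodule $Ax$ (which exists by non-degeneracy, using that $ex = x$ for some idempotent $e$ so that $x \in Ax$) and then applies Zorn's lemma to the poset of proper submodules of $Ax$, using that a proper submodule is precisely one not containing the generator $x$ and that this property is preserved under unions of chains. Everything else is formal.
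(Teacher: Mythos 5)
Your proof is correct and follows essentially the same route as the paper: reduce to a nonzero cyclic submodule of the subquotient, observe that its generator is already fixed by some idempotent (so the generator witnesses properness of submodules), apply Zorn's lemma to produce a simple quotient, and transfer the nonvanishing of $e$ back to the original subquotient. The one minor difference is the final transfer step: the paper invokes projectivity of $Ae$ to lift an $e$-fixed element from the simple quotient $An/N'$ back to $An$, while you observe directly that if $e$ acts nontrivially on a further subquotient $H'/K'$ (with $K \le K' \lneq H' \le H$) then $e$ acts nontrivially on $H/K$ -- if $em \notin K'$ for some $m \in H'$ then a fortiori $em \notin K$, since $K \subseteq K'$. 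Your version is marginally more elementary since it avoids appealing to projectivity, but the two arguments are substantively the same.
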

\begin{proof}

Suppose $M\in \cat{M}(A)$. If every subquotient of $M$ admits an $e$-fixed element, then every irreducible subquotient does so as well. Conversely, suppose every irreducible subquotient of $M$ admits an $e$-fixed element, and let $N$ be a non-zero subquotient of $M$. Taking quotients, we can assume that $N\leq M$. 

Pick $n\in N$, and consider the submodule $An$; note that $n\in An$. By Zorn's lemma $An$ admits an irreducible quotient: by hypothesis, this irreducible quotient must admit an $e$-fixed element. Since $Ae$ is projective, this element lifts to an $e$-fixed element in $An$, and thus in $N$. The result follows.
\end{proof}

\begin{definition} \label{dfn:10}
Denote the subcategory of $\cat{M}(A)$ under consideration in Lemma \ref{lem:10} by $\cat{M}(A)_{e}$.
\end{definition}

\begin{remark}\label{rem_lem:10b}
It is clear from the characterization in Lemma \ref{lem:10}(b.) that $\cat{M}(A)_{e}$ is closed under subobjects, quotients, and extensions. As such, it is an abelian category in its own right, and the canonical inclusion functor is exact. Moreover, $\cat{M}(A)_{e}$ is also closed under arbitrary direct sums.
\end{remark}

We now extend \cite[Definition 3.11]{BK} to general idempotented algebras.
 \begin{definition} \label{dfn:02}
 An idempotent $e\in \Ai$ is said to be \emph{left special} if the full subcategory $\{M\in \cat{M}(A)\ \vert \ M = AeM\}$ is closed under subquotients. 
 \end{definition}
 
 \begin{remark}
 It is clear that the full subcategory  $\{M\in \cat{M}(A)\ \vert \ M = AeM\}$ is always closed under quotients. Thus, $e\in \Ai$ is left special if and only if  $\{M\in \cat{M}(A)\ \vert \ M = AeM\}$ is closed under subobjects.
 \end{remark}

 \begin{lemma} \label{lem:02}
 Let $e\in \Ai$ be left special. Then $\cat{M}(A)_{e} = \{M\in \cat{M}(A)\ \vert \ M = AeM\}$.
\end{lemma}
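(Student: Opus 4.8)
The plan is to establish the two inclusions separately, using the ``simple subquotient'' characterization of $\cat{M}(A)_{e}$ from Lemma \ref{lem:10}(b.) and the fact that $\{M\in \cat{M}(A)\mid M = AeM\}$ is closed under subquotients (the hypothesis that $e$ is left special). Throughout, write $\cat{M}_{e} := \{M\in \cat{M}(A)\mid M = AeM\}$.

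First I would show $\cat{M}_{e}\subseteq \cat{M}(A)_{e}$. Let $M\in \cat{M}_{e}$ and let $H/K$ be any simple subquotient of $M$. Since $e$ is left special, $\cat{M}_{e}$ is closed under subquotients, so $H/K\in \cat{M}_{e}$, i.e. $H/K = Ae(H/K)$. In particular $e(H/K)\neq 0$: if $e(H/K)$ were zero then $Ae(H/K) = 0$, forcing $H/K = 0$, contradicting simplicity. By the characterization in Lemma \ref{lem:10}(b.), this shows $M\in \cat{M}(A)_{e}$.

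Next I would show $\cat{M}(A)_{e}\subseteq \cat{M}_{e}$. Let $M\in \cat{M}(A)_{e}$ and set $M' := AeM\subseteq M$, an $A$-submodule. The goal is $M' = M$. Suppose not, so $M/M'\neq 0$; pick a nonzero element and, by Zorn's lemma (as in the proof of Lemma \ref{lem:10}), produce a simple quotient $S$ of a cyclic submodule of $M/M'$, hence a simple subquotient $S = H/K$ of $M$ with $M'\subseteq K$. By definition of $M'$, we have $eM\subseteq M'\subseteq K$, so $e(H/K) = eH/eK$; but $eH\subseteq eM\subseteq K$, giving $eH = eK$ and thus $e(H/K) = 0$. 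This contradicts $M\in \cat{M}(A)_{e}$ via Lemma \ref{lem:10}(b.). Hence $M' = M$, i.e. $M = AeM$, so $M\in \cat{M}_{e}$.

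The only delicate point is the reduction to a \emph{simple} subquotient witnessing $e(H/K) = 0$ in the second inclusion: one must be careful that $eM\subseteq M'$ really does force $eH\subseteq K$ for the chosen $H,K$, which holds because $H\leq M$ implies $eH\subseteq eM$. Everything else is bookkeeping; the projectivity of $Ae$ (used in Lemma \ref{lem:10}) is what makes the passage between ``some subquotient has no $e$-fixed vector'' and ``some simple subquotient has no $e$-fixed vector'' legitimate, and we are simply invoking that lemma rather than reproving it.
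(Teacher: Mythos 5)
Your proof is correct. Both inclusions are established soundly, and the key observation in the second inclusion — that $eM\subseteq AeM$, so any subquotient $H/K$ of $M$ with $AeM\subseteq K$ satisfies $e(H/K)=0$ — is exactly what drives the contradiction in the paper as well. One small notational slip: ``$e(H/K)=eH/eK$'' should read $e(H/K)=(eH+K)/K$; the conclusion $e(H/K)=0$ is still immediate since $eH\subseteq K$.

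The route differs slightly from the paper's. For the inclusion $\cat{M}(A)_{e}\subseteq\{M\mid M=AeM\}$, you work with characterization (b.) of Lemma~\ref{lem:10} and re-run the Zorn's lemma argument to manufacture a \emph{simple} subquotient of $M/AeM$. The paper instead uses characterization (a.) directly: if $M\neq AeM$, then the nonzero quotient $M/AeM$ must admit an $e$-fixed element; by projectivity of $Ae$ that element lifts to an $e$-fixed element of $M$ outside $AeM$, which is impossible since $eM\subseteq AeM$. The paper's version is shorter precisely because Lemma~\ref{lem:10} has already paid the Zorn's-lemma cost once — there is no need to redescend to simple subquotients here. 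Your version is also fine, but it reproves part of Lemma~\ref{lem:10} rather than exploiting it. (Your closing remark suggests you are ``simply invoking'' the lemma, but the Zorn step in your second inclusion is in fact a re-derivation; you could drop it by quoting characterization (a.) instead.)
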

\begin{proof}
Since $e$ is left special the category $\{M\in \cat{M}(A)\ \vert \ M = AeM\}$ is closed under subquotients. Thus, if $M = AeM$, then every subquotient of $M$ is also generated by its $e$-fixed elements; in particular, every non-zero subquotient must admit an $e$-fixed element. Conversely, suppose every non-zero subquotient of $M$ admits an $e$-fixed element. If $M\neq AeM$, the quotient $M/AeM$ must do so as well. By the projectivity of $Ae$, such an element must lift to an $e$-fixed element of $M$ lying outside $AeM$. This is a contradiction.
\end{proof}

A full subcategory $\cat{D}$ of a category $\cat{C}$ is called \emph{localizing} if the canonical inclusion functor $\cat{C}\to \cat{D}$ admits a right adjoint. 

\begin{lemma}\label{lem:03}
 Let $e\in \Ai$ be left  special. The category $\cat{M}(A)_{e}$ is a localizing subcategory of $\cat{M}(A)$. The inclusion \[\operatorname{inc} : \cat{M}(A)_{e}\to \cat{M}(A)\] admits a right adjoint \[\Gamma_{e}: \cat{M}(A)\to \cat{M}(A)_{e},\] where $\Gamma_{e}(M) = AeM$ for $M\in \cat{M}(A)$.
 \end{lemma}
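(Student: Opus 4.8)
The plan is to verify the defining adjunction directly: for $M \in \cat{M}(A)$ and $N \in \cat{M}(A)_{e}$, I must exhibit a natural isomorphism
\[
\Hom_{\cat{M}(A)_{e}}(N, \Gamma_{e}(M)) \cong \Hom_{\cat{M}(A)}(\operatorname{inc}(N), M).
\]
Since $\cat{M}(A)_{e}$ is a full subcategory (Remark \ref{rem_lem:10b}), the left-hand side is just $\Hom_{A}(N, AeM)$, so it suffices to show that the inclusion $AeM \hookrightarrow M$ induces a bijection $\Hom_{A}(N, AeM) \xrightarrow{\sim} \Hom_{A}(N, M)$ for every $N \in \cat{M}(A)_{e}$. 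Injectivity is immediate since $AeM \subseteq M$. For surjectivity, given an $A$-linear $\varphi \colon N \to M$, I need the image $\varphi(N)$ to land inside $AeM$; because $N \in \cat{M}(A)_{e}$, we have $N = AeN$ by Lemma \ref{lem:02} (this is exactly where left specialness is used), and therefore $\varphi(N) = \varphi(AeN) = Ae\,\varphi(N) \subseteq AeM$. This gives the adjunction, establishes that $\cat{M}(A)_{e}$ is localizing, and pins down the right adjoint on objects.

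It remains to check that $M \mapsto AeM = \Gamma_e(M)$ genuinely defines a functor $\cat{M}(A) \to \cat{M}(A)_{e}$, i.e. that it is well-defined on objects and morphisms. For objects: $AeM$ is an $A$-submodule of $M$, hence non-degenerate, and it is generated by its $e$-fixed elements (the elements of $eM$), so $AeM = Ae(AeM)$; thus every non-zero subquotient of $AeM$ admits an $e$-fixed element by left specialness and Lemma \ref{lem:02}, giving $AeM \in \cat{M}(A)_{e}$. For morphisms: if $\psi \colon M \to M'$ is $A$-linear, then $\psi(AeM) = Ae\,\psi(M) \subseteq AeM'$, so $\psi$ restricts to a map $\Gamma_e(M) \to \Gamma_e(M')$; functoriality (identities and composites) is then clear. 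Naturality of the adjunction isomorphism in both variables is routine from the fact that everything is realized by restriction along $AeM \hookrightarrow M$.

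I do not anticipate a genuine obstacle here: the one substantive input is the identification $N = AeN$ for $N \in \cat{M}(A)_{e}$, which is precisely Lemma \ref{lem:02} and relies on the projectivity of $Ae$ (Lemma \ref{lem:04}) already invoked there. The remaining verifications — that $\Gamma_e$ is functorial and that the bijection is natural — are formal. The only point requiring a little care is to phrase the argument so that it is transparent that $\Gamma_e$ actually takes values in $\cat{M}(A)_{e}$ and not merely in $\cat{M}(A)$, which is handled by the object-level computation $AeM = Ae(AeM)$ above.
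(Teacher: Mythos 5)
Your proof is correct and takes essentially the same route as the paper: the key step in both is the observation that for $N \in \cat{M}(A)_e$ one has $N = AeN$ (Lemma \ref{lem:02}), so any $A$-linear map out of $N$ has image inside $AeM$ and hence factors through $\Gamma_e(M) = AeM$. The paper's proof is terser, recording only the factorization; your additional checks — that $AeM = Ae(AeM)$ so $\Gamma_e$ really lands in $\cat{M}(A)_e$, and that $\Gamma_e$ is functorial on morphisms — are implicitly used in the paper and worth making explicit.
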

 \begin{proof}
 Let $M\in \cat{M}(A)_{e}$ and $N\in \cat{M}(A)$, and suppose $f:M\to N$ is a map of $A$-modules. Since $M = AeM$, $f(M) = Aef(M)\subseteq AeN$. It follows that every map from $M$ to $N$ factors through $AeN = \Gamma_{e}(N)$.  
 \end{proof}
 
 \begin{remark}
 In a Grothendieck abelian category, every full subcategory closed under extensions, quotients, subobjects, and arbitrary coproducts is localizing. Thus, even without the assumption that $e$ is left special, it is still the case that $\cat{M}(A)_{e}$ is localizing (see Remark \ref{rem_lem:10b}). When $e$ is not known to be left special, however, we have no explicit description of the functor $\Gamma_{e}$. 
 \end{remark}
 
  \begin{lemma} \label{lem:05} 
 Let $e\in \Ai$ be left special. Then, $Ae$ is a finitely generated projective generator for the category $\cat{M}(A)_{e}$. 
 \end{lemma}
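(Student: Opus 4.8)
The plan is to establish the three properties of $Ae$ as an object of $\cat{M}(A)_e$ separately: that it lies in the category, that it is projective there, that it is finitely generated, and that it is a generator.

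First I would observe that $Ae \in \cat{M}(A)_e$. This is immediate from Lemma \ref{lem:02}: since $e$ is left special, $\cat{M}(A)_e$ coincides with $\{M \in \cat{M}(A) \mid M = AeM\}$, and clearly $Ae = Ae\cdot e \subseteq Ae(Ae)$, so $Ae = Ae(Ae)$, i.e. $Ae$ is generated by its $e$-fixed vectors (note $e \in eAe = e(Ae)$). Next, finite generation: $Ae$ is generated as an $A$-module by the single element $e$, hence is a finitely (in fact cyclically) generated object of $\cat{M}(A)$, and therefore also of the full subcategory $\cat{M}(A)_e$.

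For projectivity, the key point is that $\cat{M}(A)_e$ is a \emph{full} abelian subcategory of $\cat{M}(A)$, closed under subobjects, quotients and extensions (Remark \ref{rem_lem:10b}), and that by Lemma \ref{lem:03} the inclusion $\operatorname{inc}\colon \cat{M}(A)_e \to \cat{M}(A)$ has an exact right adjoint $\Gamma_e$ (exactness of $\Gamma_e$ follows since $\Gamma_e(M) = AeM$ and $M \mapsto AeM$ is exact on $\cat{M}(A)$, as $Ae$ is flat by Lemma \ref{lem:pre_5}(iii) together with exactness of $\nu_A$, or more directly because $\cat{M}(A)_e$ is closed under subquotients so restriction of the inclusion is exact both ways). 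Since $\operatorname{inc}$ is fully faithful and exact, and $Ae$ is projective in $\cat{M}(A)$ by Lemma \ref{lem:04}(i) — being a direct summand of the projective generator $\bigoplus_{e'} Ae'$ of Lemma \ref{l:402}(i), or simply because $\mathrm{Hom}_A(Ae,-) \cong e(-)$ is exact — the functor $\mathrm{Hom}_{\cat{M}(A)_e}(Ae,-) = \mathrm{Hom}_A(Ae, \operatorname{inc}(-))$ is a composition of exact functors, hence exact. Thus $Ae$ is projective in $\cat{M}(A)_e$.

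Finally, for the generator property: let $M \in \cat{M}(A)_e$ be nonzero; by Lemma \ref{lem:02}, $M = AeM$, so $eM \neq 0$, and by Lemma \ref{lem:04}(i) this gives a nonzero map $Ae \to M$. This shows $Ae$ is a generator of $\cat{M}(A)_e$. I do not anticipate a serious obstacle here; the only subtlety worth spelling out carefully is the exactness of $\mathrm{Hom}_{\cat{M}(A)_e}(Ae,-)$, which hinges on the fact that exactness can be checked after applying the fully faithful exact inclusion into $\cat{M}(A)$, where $Ae$ is already known to be projective. Everything else is a direct application of Lemma \ref{lem:02}, Lemma \ref{lem:03}, and Lemma \ref{lem:04}.
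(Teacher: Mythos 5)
Your proof is correct and takes essentially the same route as the paper, which disposes of this lemma in one line by citing Lemma \ref{lem:04} and Lemma \ref{lem:02}; your write-up merely spells out the implied details (that $Ae\in\cat{M}(A)_e$, that full exact inclusion plus projectivity of $Ae$ in $\cat{M}(A)$ gives projectivity in $\cat{M}(A)_e$, and that $eM\neq 0$ for $0\neq M\in\cat{M}(A)_e$ gives the generator property). One caveat: the parenthetical about exactness of $\Gamma_e$ is not actually used anywhere in your argument (the projectivity step only needs exactness of $\operatorname{inc}$), and the justifications offered there are off the mark — Lemma \ref{lem:pre_5}(iii) concerns the right module $eA$, not a tensor description of $M\mapsto AeM$, and closure under subquotients shows the inclusion reflects exactness rather than that $\Gamma_e$ is exact — although the claim itself happens to hold, since $\Gamma_e$ is left exact as a right adjoint and manifestly preserves surjections.
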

 \begin{proof}
 This is a consequence of Lemma \ref{lem:04} and Lemma \ref{lem:02}.
 \end{proof}

 \begin{theorem} [c.f.~Proposition 3.3 of \cite{BK}] \label{prop:02}
 Let $e\in \Ai$ be left special. The functor \[\mathrm{Hom}_{A}(Ae, -): \cat{M}(A)_{e}\to \cat{M}(eAe)\] is an equivalence of categories, with quasi-inverse \[A\otimes_{eAe} (-): \cat{M}(eAe)\to \cat{M}(A)_{e}.\]
 \end{theorem}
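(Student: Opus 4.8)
The plan is to verify that the functor $F = \mathrm{Hom}_{A}(Ae,-)$ and the candidate quasi-inverse $G = A\otimes_{eAe}(-)$ are mutually inverse up to natural isomorphism, using the Morita-style input that $Ae$ is a finitely generated projective generator for $\cat{M}(A)_{e}$ (Lemma \ref{lem:05}) together with the identification $\mathrm{End}_{A}(Ae)\cong eAe^{\mathrm{op}}$ (Lemma \ref{lem:04}(ii)) and $\mathrm{Hom}_{A}(Ae,M)\cong eM$ (Lemma \ref{lem:04}(i)). Concretely, under the identification $F(M)\cong eM$, the $eAe$-module structure on $eM$ is the obvious one, so the content is to produce natural isomorphisms $A\otimes_{eAe} eM\cong M$ for $M\in \cat{M}(A)_{e}$ and $e(A\otimes_{eAe} V)\cong V$ for $V\in \cat{M}(eAe)$, and to check these are compatible with morphisms and with the unit/counit of the evident adjunction $G\dashv F$ (which comes from $(A\otimes_{eAe} -, \mathrm{Hom}_{A}(Ae,-))$ being an adjoint pair, since $\mathrm{Hom}_{A}(A\otimes_{eAe} V, M)\cong \mathrm{Hom}_{eAe}(V,\mathrm{Hom}_{A}(Ae,M))$).

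First I would record that $G$ does land in $\cat{M}(A)_{e}$: for $V\in\cat{M}(eAe)$, the module $A\otimes_{eAe} V$ is generated by $Ae\otimes_{eAe} V$, i.e.\ by elements of the form $ae\otimes v$; since $e(ae\otimes v)$ — no, rather, $Ae\otimes_{eAe}V$ consists of $e$-generated stuff because $ae\otimes v = ae\otimes ev$ and $e\cdot(e\otimes v)=e\otimes v$, so $A\otimes_{eAe}V = Ae(A\otimes_{eAe}V)$, hence it lies in $\{M\mid M=AeM\}=\cat{M}(A)_{e}$ by Lemma \ref{lem:02}. (One must also check non-degeneracy of $A\otimes_{eAe}V$ as an $A$-module, which follows since it is a quotient of $\bigoplus Ae'\otimes\cdots$ type sums, or more directly because it equals $Ae(A\otimes_{eAe}V)$ and each generator $ae\otimes v$ satisfies $e''(ae\otimes v)=ae\otimes v$ for suitable $e''\in\Ai$ with $e''a=a$.) Next, the counit $GF(M)=A\otimes_{eAe} eM\to M$, $a\otimes m\mapsto am$, is surjective for every $M\in\cat{M}(A)_{e}$ precisely because $M=AeM$; its injectivity is where the real work lies.

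The hard part will be the injectivity of the counit $A\otimes_{eAe} eM\to M$. The standard unital Morita argument proves this for $M=Ae$ (where it reduces to $A\otimes_{eAe}eAe\cong Ae$, essentially formal) and then extends to arbitrary $M\in\cat{M}(A)_{e}$ by presenting $M$ as a cokernel of a map between (possibly infinite) direct sums of copies of $Ae$ — this presentation is available exactly because $Ae$ is a projective generator of $\cat{M}(A)_{e}$ (Lemma \ref{lem:05}) — and then using that both $A\otimes_{eAe}e(-)$ and the identity functor are right exact and commute with direct sums (the tensor functor always does; $e(-)\cong eA\otimes_A(-)$ does by Lemma \ref{lem:04}(iii) and Lemma \ref{lem:101}, and $\cat{M}(A)_e$ is closed under direct sums by Remark \ref{rem_lem:10b}). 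A five-lemma / naturality argument then upgrades the isomorphism on free objects to an isomorphism on all of $\cat{M}(A)_{e}$. Symmetrically, the unit $V\to FG(V)=e(A\otimes_{eAe}V)$ is an isomorphism: it is clearly so for $V=eAe$ (both sides are $eAe$), hence for free $eAe$-modules by additivity, hence for all $V$ by choosing a presentation $eAe^{(J)}\to eAe^{(I)}\to V\to 0$ and applying right-exactness of $F\circ G$ (here using that $F=\mathrm{Hom}_A(Ae,-)\cong e(-)$ is exact on $\cat{M}(A)$ and $G$ is right exact).

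Finally I would assemble these: having shown the unit and counit of the adjunction $(G,F)$ are natural isomorphisms, it follows formally that $F$ and $G$ are inverse equivalences, which is the assertion of the theorem. The only genuinely nontrivial steps are (i) checking $G$ takes values in $\cat{M}(A)_e$, and (ii) the reduction-to-free-modules argument for injectivity of the counit; everything else is bookkeeping with the adjunction and the identifications from Lemma \ref{lem:04}. I expect step (ii) to be the main obstacle, chiefly in making sure the relevant functors commute with the infinite direct sums appearing in a projective presentation over the non-unital algebra $A$ — but Lemma \ref{lem:101} and Lemma \ref{lem:04}(iii) supply exactly what is needed.
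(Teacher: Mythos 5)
Your argument is correct, but it takes a genuinely different route from the paper's. You re-derive the Morita theorem by hand in this situation: starting from the facts that $Ae$ is a finitely generated projective generator for $\cat{M}(A)_{e}$ (Lemma \ref{lem:05}) and that $\mathrm{End}_{A}(Ae)^{\mathrm{op}}\cong eAe$ (Lemma \ref{lem:04}), you verify directly that the unit and counit of the adjunction $A\otimes_{eAe}(-)\dashv \mathrm{Hom}_{A}(Ae,-)$ are isomorphisms --- first on $Ae$ and on $eAe$, then in general by taking projective presentations and exploiting right-exactness and compatibility with arbitrary direct sums. The paper instead treats the Morita theorem as a black box (citing \cite[II Theorem 1.3]{Bas68}) to conclude that $\mathrm{Hom}_{A}(Ae,-):\cat{M}(A)_{e}\to \cat{M}(eAe)$ is an equivalence with some quasi-inverse $\operatorname{H}$, and then identifies $\operatorname{inc}\circ\operatorname{H}$ with $A\otimes_{eAe}(-)$ by an adjunction argument: it builds a triangle of functors among $\cat{M}(A)$, $\cat{M}(A)_{e}$, and $\cat{M}(eAe)$ in which the inner arrow on each side is right adjoint to the outer one (using Lemma \ref{lem:03} and Corollary \ref{corr:adm_1}), notes that the inner triangle commutes, and concludes by composability and uniqueness of left adjoints. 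Your approach is more self-contained and makes the isomorphisms explicit; it also checks directly that $A\otimes_{eAe}(-)$ lands in $\cat{M}(A)_{e}$, which the paper obtains only a posteriori from $\operatorname{inc}\circ\operatorname{H}\cong A\otimes_{eAe}(-)$. The paper's route is shorter given the citation and leans on the adjunction machinery of \S\ref{s:pre_lem}, which it then reuses verbatim in the derived version (Theorem \ref{prop:51}). Both rest on the same core lemmas and are valid.
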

 \begin{proof}
 By Lemma \ref{lem:05}, $Ae$ is a finitely generated projective generator for the category $\cat{M}(A)_{e}$. The category $\cat{M}(A)_{e}$ is an abelian category admitting arbitrary coproducts. By Morita theory (\cite[II Theorem 1.3]{Bas68}) the functor $\mathrm{Hom}_{A}(Ae, -)$ establishes an equivalence of categories between $\cat{M}(A)_{e}$ and $\cat{M}(\mathrm{End}_{A}(Ae)^{\mathrm{op}})$. By Lemma \ref{lem:04}, $\mathrm{End}_{A}(Ae)^{\mathrm{op}}\cong eAe$. Let us denote a quasi-inverse to $\mathrm{Hom}_{A}(Ae, -)$ by $\operatorname{H}$.
 
 We can construct the following diagram of functors:
\[  
\begin{tikzcd}[row sep=huge, column sep = huge]
{} & \cat{M}(A) \arrow[yshift = -.7ex]{ld}{\Gamma_{e}} \arrow[yshift = -.7ex]{rd}[swap]{\mathrm{Hom}_{A}(Ae, -)} &  {}  \\
\cat{M}(A)_{e} \arrow[yshift = .7ex]{ur}{\mathrm{inc}}  \arrow[yshift = .7ex]{rr}{\mathrm{Hom}_{A}(Ae, -)} & {}  & \cat{M}(eAe)  \arrow[yshift = .7ex]{lu}[swap]{A\otimes_{eAe} (-)} \arrow[yshift = -.7ex]{ll}{\operatorname{H}}
 \end{tikzcd}
 \]

By construction the functor $\operatorname{inc}$ is left adjoint to $\Gamma_{e}$. Corollary \ref{corr:adm_1} implies that as functors between $\cat{M}(A)$ and $\cat{M}(eAe)$, the functor $A\otimes_{eAe} (-)$ has $\nu_{eAe}$ as its right adjoint. As a functor from $\cat{M}(A)$ to $\cat{M}(eAe)$, however, $\nu_{eAe}$ is nothing but $\mathrm{Hom}_{A}(Ae, -)$. Finally, we can choose $\operatorname{H}$ to be left adjoint to $\mathrm{Hom}_{A}(Ae,-)$.
 
 Thus, on each of the three sides of the above diagram, the functor represented by the inner arrow is right adjoint to that represented by the outer arrow. Furthermore, the triangular diagram formed by considering only the inner arrows is commutative. 
 
As adjoints compose, $\operatorname{inc}\circ \operatorname{H} \cong A\otimes_{eAe} (-)$. This establishes the theorem. 
\end{proof}
 
\begin{corollary}
Let $e\in \Ai$ be left special. Then, $Ae$ is flat as a right $eAe$-module. 
\end{corollary}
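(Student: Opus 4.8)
The plan is to read off flatness from the Morita equivalence established in Theorem~\ref{prop:02}, with essentially no real work beyond bookkeeping. Since $eAe$ is unital with unit $e$, every left $eAe$-module is automatically non-degenerate, so $\cat{M}(eAe)$ is nothing but the category of all left $eAe$-modules. Hence it suffices to exhibit the functor $Ae\otimes_{eAe}(-)$ as an exact functor on $\cat{M}(eAe)$.

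First I would observe that the quasi-inverse equivalence $A\otimes_{eAe}(-)\colon \cat{M}(eAe)\to \cat{M}(A)_{e}$ of Theorem~\ref{prop:02}, being an equivalence of abelian categories, preserves kernels and cokernels and is therefore exact. Composing it with the inclusion $\cat{M}(A)_{e}\hookrightarrow \cat{M}(A)$, which is exact by Remark~\ref{rem_lem:10b}, and recalling that a sequence in $\cat{M}(A)$ is exact precisely when it is exact as a sequence of underlying $\mathbb{K}$-modules, one concludes that $A\otimes_{eAe}(-)$ is an exact functor from $\cat{M}(eAe)$ to $\cat{M}(\mathbb{K})$.

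It then remains to identify this functor with $Ae\otimes_{eAe}(-)$. For a left $eAe$-module $M$ one has $eM=M$, and the inclusion $Ae\hookrightarrow A$ induces a natural map $Ae\otimes_{eAe}M\to A\otimes_{eAe}M$ which is surjective because $a\otimes m = ae\otimes m$, and injective because $a\otimes m\mapsto ae\otimes m$ furnishes a well-defined two-sided inverse. Thus $Ae\otimes_{eAe}(-)$ is exact on all left $eAe$-modules, which is exactly the assertion that $Ae$ is flat as a right $eAe$-module.

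I do not anticipate a genuine obstacle: the single point requiring care is the bookkeeping in the last two steps, namely verifying that $\cat{M}(eAe)$ is the full module category and that the natural identification $A\otimes_{eAe}M\cong Ae\otimes_{eAe}M$ is valid, both of which are routine consequences of $eAe$ being unital.
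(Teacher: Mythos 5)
Your proof is correct and follows essentially the same route as the paper's: both deduce exactness of $A\otimes_{eAe}(-)$ from the Morita equivalence of Theorem~\ref{prop:02} and then invoke the natural identification $A\otimes_{eAe}M \cong Ae\otimes_{eAe}M$. Your added remarks (that $eAe$ is unital, hence $\cat{M}(eAe)$ is the full module category, and that exactness in $\cat{M}(A)_e$ detects exactness of underlying $\mathbb{K}$-modules) just make explicit what the paper leaves implicit.
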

\begin{proof}
By Theorem \ref{prop:02}, the functor $A\otimes_{eAe} (-)$ establishes an equivalence of categories between $\cat{M}(eAe)$ and $\cat{M}(A)_{e}$; as such, it is exact. As $\cat{M}(A)_{e}$ is an abelian subcategory of $\cat{M}(A)$, this precisely says that any exact sequence in $\cat{M}(eAe)$ remains exact after applying the functor $A\ot _{eAe}(-)$. However, for any $M\in \cat{M}(eAe)$ there is a natural isomorphism $A\ot _{eAe} M \cong Ae\ot_{eAe} M$: thus $Ae$ is flat as a right $eAe$-module.
\end{proof}

\subsection{Split idempotents and the derived category} \label{sid}

\begin{definition}\label{defn:split}
An idempotent  $e\in \Ai$ is said to be \emph{left split} if the inclusion $\operatorname{inc}: \cat{M}(A)_{e}\to \cat{M}(A)$ preserves injective objects. 
\end{definition}

For example, if $e$ is a special idempotent arising from a finite subset of $\mathfrak{B}(G)$ (see \S \ref{ss:red_grp}), then $e$ is left split: this follows from Bernstein's direct product decomposition of the category of smooth complex representations of $G$.

\begin{lemma} \label{lem:07}
An idempotent $e\in \Ai$ is left split if and only if for all injective objects $I\in \cat{M}(A)$, $\Gamma_{e}(I)$\footnote{The functor $\Gamma_{e}$ here is actually the functor $\operatorname{inc}\circ\Gamma_{e}$.} is injective in $\cat{M}(A)$.
\end{lemma}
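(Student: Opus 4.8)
The plan is to prove the equivalence by unwinding what it means for the inclusion functor $\operatorname{inc}: \cat{M}(A)_{e}\to \cat{M}(A)$ to preserve injectives, and comparing it with the stated condition on $\Gamma_{e}$. Recall that $\Gamma_{e}$ is right adjoint to $\operatorname{inc}$ (Lemma \ref{lem:03}), and that $\cat{M}(A)_{e}$ is a Grothendieck abelian category in its own right (Remark \ref{rem_lem:10b}), so it has enough injectives. The footnote makes clear that the statement concerns $\operatorname{inc}\circ\Gamma_{e}$, i.e. whether $AeI$, viewed inside $\cat{M}(A)$, is injective there.

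First I would prove the ``only if'' direction. Suppose $e$ is left split, and let $I\in \cat{M}(A)$ be injective. Choose an injective object $J$ of $\cat{M}(A)_{e}$ together with a monomorphism $\Gamma_{e}(I)\hookrightarrow J$ in $\cat{M}(A)_{e}$ (possible since $\cat{M}(A)_{e}$ has enough injectives); since $\operatorname{inc}$ is exact (Remark \ref{rem_lem:10b}), this is a monomorphism $\Gamma_{e}(I)\hookrightarrow J$ in $\cat{M}(A)$. Composing with the counit $\operatorname{inc}\Gamma_{e}(I)\to I$, which is just the inclusion $AeI\hookrightarrow I$ and hence a monomorphism, gives a monomorphism $\Gamma_{e}(I)\hookrightarrow I\oplus J$ — but more usefully, I would argue that $\Gamma_e(I)$ is a direct summand of an injective: since $e$ is left split, $J$ is injective in $\cat{M}(A)$, so the monomorphism $\Gamma_{e}(I)\hookrightarrow J$ in $\cat{M}(A)$, composed appropriately, splits provided $\Gamma_e(I)$ is itself injective in $\cat M(A)_e$. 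This is where I need the key input: $\Gamma_{e}(I)$ is injective in $\cat{M}(A)_{e}$, because $\Gamma_{e}$, being right adjoint to the exact functor $\operatorname{inc}$, preserves injectives. Hence $\Gamma_{e}(I)$ is an injective object of $\cat{M}(A)_{e}$, and as $e$ is left split, $\operatorname{inc}(\Gamma_{e}(I)) = AeI$ is injective in $\cat{M}(A)$.

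For the ``if'' direction, suppose $\Gamma_{e}(I)$ is injective in $\cat{M}(A)$ for every injective $I\in \cat{M}(A)$. Let $E$ be an arbitrary injective object of $\cat{M}(A)_{e}$; I must show $\operatorname{inc}(E)$ is injective in $\cat{M}(A)$. Embed $\operatorname{inc}(E)$ into an injective object $I$ of $\cat{M}(A)$; applying $\Gamma_{e}$ and using that $\Gamma_{e}\circ\operatorname{inc}\cong \operatorname{id}$ (since $E = AeE$ for $E\in\cat M(A)_e$, by Lemma \ref{lem:02}, so the counit is an isomorphism on objects of $\cat M(A)_e$ — note Lemma \ref{lem:02} requires $e$ left special, which I should confirm is a standing hypothesis here, and indeed left split idempotents are considered alongside left special ones; if not I would need to adjust) together with the exactness of $\Gamma_e$, I get a monomorphism $E \cong \Gamma_{e}(\operatorname{inc} E)\hookrightarrow \Gamma_{e}(I)$ in $\cat{M}(A)_{e}$. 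Since $E$ is injective in $\cat{M}(A)_{e}$, this splits: $E$ is a direct summand of $\Gamma_{e}(I)$ in $\cat{M}(A)_{e}$, hence (applying the exact functor $\operatorname{inc}$) $\operatorname{inc}(E)$ is a direct summand of $\operatorname{inc}\Gamma_{e}(I)$ in $\cat{M}(A)$. By hypothesis $\operatorname{inc}\Gamma_{e}(I) = AeI$ is injective in $\cat{M}(A)$, and a direct summand of an injective is injective, so $\operatorname{inc}(E)$ is injective.

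The main obstacle I anticipate is bookkeeping around the unit/counit of the adjunction between $\operatorname{inc}$ and $\Gamma_{e}$: specifically verifying that the counit $\operatorname{inc}\Gamma_{e}\to \operatorname{id}$ is the honest inclusion $AeM\hookrightarrow M$ (so it is monic) and that its restriction to $\cat{M}(A)_{e}$ is an isomorphism (which is exactly the content that $M = AeM$ for $M\in\cat M(A)_e$). I also need the standard fact that a right adjoint of an exact functor preserves injectives — this is where the left special hypothesis enters, since it is what guarantees $\operatorname{inc}$ is exact and $\Gamma_{e}$ exists with the explicit description $\Gamma_e(M)=AeM$. Once those points are nailed down, both implications are formal diagram chases with splitting arguments.
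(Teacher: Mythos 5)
Your argument is correct and follows the paper's approach almost exactly: in the forward direction, the essential point is that $\Gamma_{e}$, as right adjoint to the exact functor $\operatorname{inc}$, sends injectives of $\cat{M}(A)$ to injectives of $\cat{M}(A)_{e}$, after which the left split hypothesis applies; in the converse direction you embed the injective of $\cat{M}(A)_{e}$ into an injective of $\cat{M}(A)$, apply $\Gamma_{e}$, and use a direct-summand argument, precisely as the paper does with injective hulls. Two small points: the digression in the forward direction involving $J$ is superfluous (once $\Gamma_{e}(I)$ is injective in $\cat{M}(A)_{e}$, the definition of left split immediately yields the conclusion), and the worry about needing $e$ left special is resolvable without Lemma \ref{lem:02}, since $\operatorname{inc}$ being fully faithful already forces the unit $\operatorname{id}\to\Gamma_{e}\circ\operatorname{inc}$ to be an isomorphism.
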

\begin{proof}
Suppose first that $e$ is left split, and $I$ is an injective object in $\cat{M}(A)$. The functor $\Gamma_{e}$ is right-adjoint to the inclusion $\operatorname{inc}$; as $\operatorname{inc}$ is exact, $\Gamma_{e}$ sends injective objects in $\cat{M}(A)$ to injective objects in $\cat{M}(A)_{e}$. As $e$ is left split, $\Gamma_{e}(I)$ is therefore injective in $\cat{M}(A)$.

Conversely assume that $\Gamma_{e}$, viewed as a functor from $\cat{M}(A)$ to itself, preserves injective objects. Suppose $I$ is injective in $\cat{M}(A)_{e}$, and let $E$ be its injective hull in $\cat{M}(A)$. By hypothesis, $\Gamma_{e}(E)$ is injective in $\cat{M}(A)$; it is also an object of $\cat{M}(A)_{e}$. Since $I\leq \Gamma_{e}(E)$, $I$ is therefore a summand of $\Gamma_{e}(E)$. As $\Gamma_{e}(E)$ is injective in $\cat{M}(A)$, $I$ is so as well. 
\end{proof}

\begin{lemma} \label{lem:11}
Let $e\in \Ai$ be left special, and suppose $N\in \cat{M}(A)$. Then, $\mathrm{Hom}_{A}(M,N)=0$ for all $M\in \cat{M}(A)_{e}$ if and only if $eN=0$.
\end{lemma}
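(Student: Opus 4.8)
The statement is a biconditional. One direction is nearly immediate: if $\mathrm{Hom}_A(M,N)=0$ for all $M\in\cat{M}(A)_e$, then in particular $\mathrm{Hom}_A(Ae,N)=0$ since $Ae=Ae\cdot e$ lies in $\{M\in\cat{M}(A)\mid M=AeM\}=\cat{M}(A)_e$ by Lemma \ref{lem:02} (using that $e$ is left special). By Lemma \ref{lem:04}(i), $\mathrm{Hom}_A(Ae,N)\cong eN$, so $eN=0$. The plan is to get the converse by essentially the same computation run in reverse, together with the description of objects of $\cat{M}(A)_e$ as modules generated by their $e$-fixed vectors.

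For the converse, suppose $eN=0$ and let $M\in\cat{M}(A)_e$. By Lemma \ref{lem:02}, $M=AeM$, so $M$ is spanned as an $A$-module by the subspace $eM$ of $e$-fixed vectors. Given any $A$-linear map $f:M\to N$, I would argue that $f$ is determined by its restriction to $eM$: indeed any $m\in M$ can be written as a finite sum $m=\sum_i a_i m_i$ with $a_i\in A$ and $m_i\in eM$, so $f(m)=\sum_i a_i f(m_i)$. But for $m_i\in eM$ we have $em_i=m_i$, hence $f(m_i)=f(em_i)=ef(m_i)\in eN=0$. Therefore $f(m)=0$ for all $m$, i.e. $f=0$. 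Since $M$ and $f$ were arbitrary, $\mathrm{Hom}_A(M,N)=0$ for all $M\in\cat{M}(A)_e$, as desired.

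I do not anticipate a serious obstacle here; the only subtlety is making sure the equality $\cat{M}(A)_e=\{M\in\cat{M}(A)\mid M=AeM\}$ is invoked correctly, which is exactly the content of Lemma \ref{lem:02} and requires the hypothesis that $e$ is left special. (Without left-specialness one would still have that $\cat{M}(A)_e$ is characterized by the subquotient condition of Lemma \ref{lem:10}, but the clean generation-by-$e$-fixed-vectors description would fail, so the argument genuinely uses the assumption.) The mild care needed is simply to phrase the "span by $e$-fixed vectors" step using the definition $AeM=\{\sum_i a_i m_i : a_i\in A,\ m_i\in eM\}$ and the fact that $ef(m_i)=f(em_i)$ for $A$-linear $f$.
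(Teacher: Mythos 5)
Your proof is correct and follows essentially the same route as the paper, relying on Lemma \ref{lem:02} (valid since $e$ is left special) to characterize $\cat{M}(A)_{e}$ as $\{M : M = AeM\}$ and then running the obvious direct arguments in each direction. The only cosmetic difference is in the forward implication: you test the vanishing hypothesis against the specific module $Ae$ (via $\mathrm{Hom}_A(Ae,N)\cong eN$), whereas the paper applies it to $\Gamma_e(N)=AeN$ itself and deduces $AeN=0$; note also that to see $Ae\in\cat{M}(A)_e$ one should check $Ae = AeAe$ (which holds since $e = e\cdot e\cdot e \in eAe$), rather than the tautology $Ae = Ae\cdot e$ you wrote, but this is a trivial fix.
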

\begin{proof}
If $\mathrm{Hom}_{A}(M,N)=0$ for all $M\in \cat{M}(A)_{e}$, then  $\Gamma_{e}(N) = AeN = 0$. Thus $eN=0$. 

Conversely, if $f:M\to N$ is a non-zero map where $M \in \cat{M}(A)_{e}$, then $0\neq \operatorname{im}(f)\leq AeN$. If $eN=0$, this is a contradiction.
\end{proof}

\begin{lemma} \label{lem:12}
Let $e\in \Ai$ be left special. Suppose $M\in \cat{M}(A)_{e}$ and $eN=0$. Then $\mathrm{Hom}_{A}(N,M)=0$.
\end{lemma}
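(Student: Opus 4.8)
The plan is to show that any $A$-linear map $f\colon N\to M$ is zero by exploiting the fact that $M$ is generated by its $e$-fixed elements. First I would observe that, since $e$ is left special, Lemma \ref{lem:02} gives $M = AeM$. I would then dualize the strategy used in the proof of Lemma \ref{lem:11}: rather than looking at the image of a map out of an object of $\cat{M}(A)_e$, I would analyze the image of the map $f\colon N\to M$ directly and show it must be zero because it cannot meet the $e$-fixed part of $M$ in anything nonzero, yet $M$ has no proper submodule containing all its $e$-fixed vectors.

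The key steps, in order, are as follows. Let $f\colon N\to M$ be $A$-linear, and set $N' = \operatorname{im}(f)\leq M$. Since $eN = 0$, for any $n\in N$ we have $ef(n) = f(en) = f(0) = 0$, so $eN' = 0$. Now consider the submodule $AeM$ of $M$; by left specialness $AeM = M$, so in particular $eM$ generates $M$ as an $A$-module. I claim $N' = 0$: suppose not. Then $N'$ is a nonzero submodule of $M$, and since $M$ lies in $\cat{M}(A)_e$, which is closed under subobjects (Remark \ref{rem_lem:10b}, or directly: any subobject of an object of $\cat{M}(A)_e$ is again in $\cat{M}(A)_e$ because the defining condition in Lemma \ref{lem:10}(b.) is inherited by submodules), we conclude $N'\in \cat{M}(A)_e$. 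But then $N' = AeN'$ by Lemma \ref{lem:02}, and $eN' = 0$ forces $N' = 0$, a contradiction. Hence $f = 0$, and $\mathrm{Hom}_A(N,M) = 0$.

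I do not expect any genuine obstacle here; the statement is essentially the ``other half'' of Lemma \ref{lem:11} and follows formally once one notes that subobjects of objects of $\cat{M}(A)_e$ remain in $\cat{M}(A)_e$ (which is the content of Remark \ref{rem_lem:10b}) together with the characterization $\cat{M}(A)_e = \{M\in\cat{M}(A)\mid M = AeM\}$ from Lemma \ref{lem:02}. The only point requiring a moment's care is confirming that $\operatorname{im}(f)$ inherits membership in $\cat{M}(A)_e$, but this is immediate from the simple-subquotient description in Lemma \ref{lem:10}(b.), since every simple subquotient of $\operatorname{im}(f)$ is a simple subquotient of $M$. An alternative, even shorter route would be to invoke Lemma \ref{lem:11} applied to the module $N$: if $\mathrm{Hom}_A(N,M)\neq 0$ for some $M\in\cat{M}(A)_e$, then by the proof of Lemma \ref{lem:11} the image of such a map would be a nonzero submodule of $AeN$, but $AeN = 0$ since $eN = 0$ — this contradiction finishes the proof directly.
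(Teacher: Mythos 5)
Your main argument is correct and matches the paper's proof: both observe that the image $f(N)$ is a submodule of $M$ lying in $\cat{M}(A)_{e}$ (closed under subobjects) yet annihilated by $e$, which forces it to vanish by the characterization $\cat{M}(A)_{e}=\{M\mid M=AeM\}$. One small caveat worth flagging: the ``alternative, even shorter route'' you sketch at the end misapplies Lemma \ref{lem:11} --- that lemma's proof treats a map \emph{out of} an object of $\cat{M}(A)_{e}$ and concludes $\operatorname{im}(f)\leq AeN$, whereas here the map goes \emph{into} $M\in\cat{M}(A)_{e}$, so $\operatorname{im}(f)$ is a submodule of $M$, not of $AeN$; the main argument you give is the correct one and is what the paper does.
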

\begin{proof}
The existence of a non-zero morphism $f:N\to M$ implies the existence of a non-zero submodule $0\neq f(N)\leq M$. However, by Lemma \ref{lem:02} and Lemma \ref{lem:10}, we know that $f(N)$ must contain a non-zero $e$-fixed element: thus, $ef(N)\neq 0$. This is a contradiction. 
\end{proof}

 \begin{proposition} \label{prop:20}
 Let $e\in \Ai$ be left special and left split. Suppose $M\in \cat{M}(A)_{e}$ and $eN=0$. Then $\mathrm{RHom}_{A}(N,M) = 0$. 
 \end{proposition}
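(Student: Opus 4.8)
The plan is to reduce the derived vanishing statement $\mathrm{RHom}_A(N,M)=0$ to the already-established module-level vanishing in Lemma \ref{lem:12}, using a K-injective resolution of $M$ that stays inside $\cat{M}(A)_e$. First I would invoke Proposition \ref{prop:501}(i) applied to the \emph{abelian category} $\cat{M}(A)_e$ itself: by Remark \ref{rem_lem:10b}, $\cat{M}(A)_e$ is a Grothendieck abelian category (it is closed under subobjects, quotients, extensions, and arbitrary direct sums in $\cat{M}(A)$), and by Lemma \ref{lem:05} it has a projective generator $Ae$; an injective cogenerator can be produced by applying $\Gamma_e$ to the injective cogenerator of $\cat{M}(A)$ from Lemma \ref{l:402}(ii). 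Hence $M$ admits a quasi-isomorphism $M\to I$ to a K-injective complex $I$ \emph{in} $\cat{K}(\cat{M}(A)_e)$, with each $I^k$ an injective object of $\cat{M}(A)_e$.

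The key point is then that $I$, viewed in $\cat{K}(A)$, is still K-injective. This is exactly where left splitness enters: since $e$ is left split, the inclusion $\operatorname{inc}:\cat{M}(A)_e\to \cat{M}(A)$ preserves injective objects (Definition \ref{defn:split}), so each $I^k$ is injective in $\cat{M}(A)$. More is needed — I want the \emph{complex} $I$ to be K-injective in $\cat{K}(A)$, not just termwise injective. The cleanest route: for any acyclic complex $L\in \cat{K}(A)$, $\mathrm{Hom}_A(L,I)\cong \mathrm{Hom}_{\cat{M}(A)_e}(\Gamma_e(L), I)$ because $I$ lands in $\cat{M}(A)_e$ and $\Gamma_e=\operatorname{inc}^{\mathrm{right\ adjoint}}$ (Lemma \ref{lem:03}); since $\Gamma_e$ is exact on $\cat{M}(A)_e$-valued things — wait, $\Gamma_e$ need not be exact on all of $\cat{M}(A)$. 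So instead I would argue directly: given a bounded-above complex of injectives-in-$\cat{M}(A)$ that is additionally K-injective in the subcategory, termwise-injectivity in $\cat{M}(A)$ plus the adjunction $\mathrm{Hom}_A(L,\operatorname{inc}\,I^k)\cong\mathrm{Hom}_{\cat{M}(A)_e}(\Gamma_e(L^k),I^k)$ gives an isomorphism of complexes $\mathrm{Hom}_A(L,I)\cong\mathrm{Hom}_{\cat{M}(A)_e}(\Gamma_e(L),I)$; but $\Gamma_e(L)=AeL$ need not be acyclic when $L$ is. The safe fix is to test K-injectivity of $I$ in $\cat{K}(A)$ against acyclic $L$ by replacing $L$ with a K-projective resolution built from copies of $\bigoplus_{f\in\Ai}Af$; then one checks $\mathrm{Hom}_A(P,I)$ is acyclic using that $I$ is termwise injective and that $P$ is a complex of projectives — this is the standard argument that a termwise-injective complex which is left-bounded, or more generally K-injective "locally," remains K-injective, and it goes through verbatim here. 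Alternatively, and most economically, I would simply cite that a quasi-isomorphism of K-injective complexes in $\cat{M}(A)_e$ whose terms are injective in $\cat{M}(A)$ is K-injective in $\cat{M}(A)$ by the same proof as \cite[Section 5.1]{Kr} run in $\cat{M}(A)$, since the only inputs are termwise injectivity and the acyclicity of $\mathrm{Hom}$ against acyclics — and the latter transfers because $\operatorname{inc}$ is exact and fully faithful and preserves injectives.

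With $I$ established as K-injective in $\cat{K}(A)$, the rest is immediate: $\mathrm{RHom}_A(N,M)\cong \mathrm{Hom}_A(N,I)$, and in each cohomological degree $k$ the term $\mathrm{Hom}_A(N,I^k)$ vanishes because $I^k\in\cat{M}(A)_e$, $eN=0$, and Lemma \ref{lem:12} applies. Therefore the Hom-complex $\mathrm{Hom}_A(N,I)$ is identically zero, so $\mathrm{RHom}_A(N,M)=0$ in $\cat{D}(\mathbb{K})$.

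I expect the main obstacle to be precisely the middle step — promoting a K-injective resolution of $M$ inside $\cat{M}(A)_e$ to a K-injective complex in $\cat{K}(A)$ — since left splitness only literally promises preservation of injective \emph{objects}, not of K-injective \emph{complexes}. The resolution of this obstacle is that K-injectivity is detected degreewise on Hom-complexes against acyclics, and the exact fully faithful inclusion $\operatorname{inc}$ together with its preservation of injectives is exactly enough to carry the Krause-style verification from $\cat{M}(A)_e$ to $\cat{M}(A)$; no genuinely new homological input beyond Definition \ref{defn:split}, Lemma \ref{lem:12}, and Proposition \ref{prop:501} is required.
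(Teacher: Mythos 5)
Your plan is in the right spirit, and you correctly identify both the endgame (applying Lemma~\ref{lem:12} termwise) and the crux (transporting a resolution of $M$ from $\cat{M}(A)_{e}$ to $\cat{M}(A)$ so it still computes $\mathrm{RHom}_{A}$). However, there is a genuine gap in the middle: you reach for the general K-injective machinery of Proposition~\ref{prop:501}(i), and then your several attempted arguments that the resulting complex remains K-injective in $\cat{K}(A)$ are either incorrect or hand-waving. In particular, the ``safe fix'' of replacing an acyclic test complex $L$ by a K-projective resolution is vacuous (an acyclic complex's K-projective resolution is zero), and the ``most economical'' appeal to running Krause's Section~5.1 argument over again is an assertion, not a proof. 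Preservation of injective \emph{objects} under $\operatorname{inc}$ does not, in general, promote a K-injective complex in $\cat{K}(\cat{M}(A)_{e})$ to a K-injective complex in $\cat{K}(A)$ — as you yourself note, this is exactly where the difficulty sits, and you never actually close it.

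The correct and much simpler observation, which you gesture at in passing (``a termwise-injective complex which is left-bounded\ldots remains K-injective'') but never commit to, is that $M$ is a \emph{module}, concentrated in a single degree. Therefore you should not invoke the general K-injective resolution machinery at all: simply take a classical bounded-below injective resolution $0\to M\to I^{0}\to I^{1}\to\cdots$ in $\cat{M}(A)_{e}$ (which exists since $\cat{M}(A)_{e}$ has enough injectives, e.g.\ via Lemmas~\ref{lem:03} and~\ref{l:402}). Left splitness of $e$ makes each $I^{i}$ injective in $\cat{M}(A)$, and a bounded-below complex of injectives is automatically K-injective, so $I^{\bullet}$ computes $\mathrm{RHom}_{A}(N,M)$. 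Then $\mathrm{Hom}_{A}(N,I^{i})=0$ for every $i$ by Lemma~\ref{lem:12}, and you are done. This is precisely the paper's proof; your proposal's detour into the unbounded setting introduces an obstruction that the bounded-below case simply sidesteps.
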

 \begin{proof}
 Let $0\to M\to I^{0}\to I^{1}\to I^{2}\to \ldots$ be an injective resolution of $M$ in $\cat{M}(A)_{e}$. Since $e$ is left split, $I^{0}\to I^{1}\to I^{2}\to \ldots$ remains an injective resolution of $M$ in $\cat{M}(A)$, and can be used to compute $\mathrm{RHom}_{A}$.
As each $I^{i}$ is in $\cat{M}(A)_{e}$, Lemma \ref{lem:12} implies that $\mathrm{Hom}_{A}(N, I^{i}) = 0$ for all $i\in \mathbb{N}$. The conclusion follows.
 \end{proof}
 
 \begin{corollary} \label{cor:10}
  Let $e\in \Ai$ be left special and left split. Suppose $M\in \cat{M}(A)_{e}$ and $eN=0$. Then $\mathrm{Ext}^{1}_{A}(N,M) = 0$. 
  \end{corollary}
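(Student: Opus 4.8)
The plan is to deduce Corollary \ref{cor:10} directly from Proposition \ref{prop:20}. We are given that $e$ is left special and left split, that $M \in \cat{M}(A)_e$, and that $eN = 0$. Proposition \ref{prop:20} tells us precisely that $\mathrm{RHom}_A(N,M) = 0$ in $\cat{D}(\mathbb{K})$.

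First I would recall that for any $A$-modules $N$ and $M$, viewed as complexes concentrated in degree $0$, one has $\mathrm{Ext}^i_A(N,M) \cong \operatorname{H}^i(\mathrm{RHom}_A(N,M))$ for all $i$. This is the standard identification of Ext groups with the cohomology of the derived Hom complex, and it is valid in our setting because, by Proposition \ref{prop:501}, injective resolutions exist in $\cat{M}(A)$ and compute $\mathrm{RHom}_A$. Since $\mathrm{RHom}_A(N,M) = 0$ by Proposition \ref{prop:20}, all of its cohomology vanishes; in particular $\operatorname{H}^1(\mathrm{RHom}_A(N,M)) = 0$, which gives $\mathrm{Ext}^1_A(N,M) = 0$.

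There is essentially no obstacle here: the corollary is an immediate specialization of the preceding proposition, extracting the degree-$1$ cohomology from the statement that the whole derived Hom object vanishes. The only point worth a sentence of care is the compatibility of $\mathrm{RHom}_A$ computed in $\cat{D}(A)$ versus $\overline{\cat{D}}(A)$, but this was already addressed in Lemma \ref{cor:201} and the surrounding discussion, so it requires no further argument. In fact one could equally well observe that Proposition \ref{prop:20}'s proof exhibits an injective resolution $I^\bullet$ of $M$ in $\cat{M}(A)$ with $\mathrm{Hom}_A(N, I^i) = 0$ for all $i$, so that the complex $\mathrm{Hom}_A(N, I^\bullet)$ is identically zero, whence every $\mathrm{Ext}^i_A(N,M)$ — and in particular $\mathrm{Ext}^1_A(N,M)$ — vanishes.
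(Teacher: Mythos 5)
Your proof is correct and takes essentially the same route as the paper: cite Proposition \ref{prop:20} for $\mathrm{RHom}_A(N,M)=0$, then extract $\mathrm{Ext}^1$ as $\operatorname{H}^1$ of that complex. The additional remarks about $\cat{D}(A)$ versus $\overline{\cat{D}}(A)$ and the explicit injective resolution are harmless but not needed beyond what the paper's one-line proof records.
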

  \begin{proof}
This follows from the isomorphism $\mathrm{Ext}^{1}_{A}(N,M) \cong \operatorname{H}^{1}(\mathrm{RHom}_{A}(N,M))$.
  \end{proof}

\begin{corollary} \label{cor:12}
Let $e\in \Ai$ be left special and left split, and suppose $M\in \cat{M}(A)$. Then, the short exact sequence $0\to \Gamma_{e}(M)\to M\to M/\Gamma_{e}(M)\to 0$ splits. In particular,
$
M\cong \Gamma_{e}(M)\oplus M/\Gamma_{e}(M).
$
\end{corollary}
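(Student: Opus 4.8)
The plan is to show that the canonical surjection $\pi\colon M\to M/\Gamma_{e}(M)$ admits a section, which is equivalent to the vanishing of the class of the extension $0\to \Gamma_{e}(M)\to M\to M/\Gamma_{e}(M)\to 0$ in $\mathrm{Ext}^{1}_{A}\bigl(M/\Gamma_{e}(M),\Gamma_{e}(M)\bigr)$. To invoke Corollary \ref{cor:10}, I need to check that the two modules appearing here satisfy its hypotheses, with $N = M/\Gamma_{e}(M)$ and the coefficient module $\Gamma_{e}(M)$. First, $\Gamma_{e}(M) = AeM$ lies in $\cat{M}(A)_{e}$ by Lemma \ref{lem:03} (it is the value of the right adjoint $\Gamma_{e}$, whose essential image is $\cat{M}(A)_{e}$), so that half is immediate.

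The key step is to verify that $e\bigl(M/\Gamma_{e}(M)\bigr) = 0$. Suppose $\bar m \in M/\Gamma_{e}(M)$ satisfies $e\bar m = \bar m$; lift $\bar m$ to $m\in M$. Then $em - m \in \Gamma_{e}(M) = AeM$, but $em \in eM \subseteq AeM = \Gamma_{e}(M)$ as well, so $m \in \Gamma_{e}(M)$ and hence $\bar m = 0$. This shows $M/\Gamma_{e}(M)$ has no nonzero $e$-fixed elements, i.e. $e\bigl(M/\Gamma_{e}(M)\bigr)=0$. (This is essentially the same computation used at the end of the proof of Lemma \ref{lem:02}.)

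With both hypotheses in place, Corollary \ref{cor:10} gives $\mathrm{Ext}^{1}_{A}\bigl(M/\Gamma_{e}(M),\Gamma_{e}(M)\bigr)=0$, so the short exact sequence $0\to \Gamma_{e}(M)\to M\to M/\Gamma_{e}(M)\to 0$ splits, yielding the direct sum decomposition $M\cong \Gamma_{e}(M)\oplus M/\Gamma_{e}(M)$. I do not anticipate a real obstacle here: the only thing to be careful about is the bookkeeping that $\Gamma_{e}(M)$ genuinely lands in $\cat{M}(A)_{e}$ (so that Corollary \ref{cor:10} applies with this module as the target) and the short verification that the quotient kills $e$; everything else is a formal consequence of the $\mathrm{Ext}^{1}$ vanishing. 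One could alternatively phrase the argument via Proposition \ref{prop:20} directly and extract $\mathrm{H}^{1}$, but routing through Corollary \ref{cor:10} is cleaner.
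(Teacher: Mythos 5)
Your proof is correct and follows the paper's own route exactly: reduce the splitting to the vanishing of $\mathrm{Ext}^{1}_{A}\bigl(M/\Gamma_{e}(M),\Gamma_{e}(M)\bigr)$ and invoke Corollary \ref{cor:10}. The only added content is your explicit verification of the hypotheses of Corollary \ref{cor:10}, which the paper leaves implicit; this is good hygiene. (One small streamlining: to see $e\bigl(M/\Gamma_{e}(M)\bigr)=0$ you need not argue by contradiction with $e$-fixed elements; for any $\bar m$ one has $e\bar m=\overline{em}$, and $em\in eM\subseteq AeM=\Gamma_{e}(M)$, so $e\bar m=0$ directly.)
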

\begin{proof}
The short exact sequence $0\to \Gamma_{e}(M)\to M\to M/\Gamma_{e}(M)\to 0$ represents an element of $\mathrm{Ext}_{A}^{1}(M/\Gamma_{e}(M), \Gamma_{e}(M))$. By Corollary \ref{cor:10}, $\mathrm{Ext}_{A}^{1}(M/\Gamma_{e}(M), \Gamma_{e}(M)) = 0$. Thus, the short exact sequence is split.
\end{proof}

\begin{definition} \label{dfn:30}
Let $e\in \Ai$ be left special. Denote the full subcategory $\{M\in \cat{M}(A)\ |\  eM=0\}$ by $\cat{M}(A)_{e}^{\perp}$. 
\end{definition}

 If $\cat{E}$ and $\cat{F}$ are arbitrary categories, we can form the product category $\cat{E}\times \cat{F}$. There are canonical projection functors $\cat{E}\times \cat{F} \to \cat{E}$ and $\cat{E} \times \cat{F} \to \cat{F}$. When $\cat{E}$ and $\cat{F}$ are additive categories there are also natural inclusion functors $\cat{E}\to \cat{E}\times \cat{F}$ and $\cat{F}\to \cat{E}\times \cat{F}$. For example the inclusion functor $\cat{E}\to \cat{E}\times \cat{F}$ sends $X\in \mathrm{Ob}(\cat{E})$ to $(0,X)$. In the case where $\cat{E}$ and $\cat{F}$ are additive, we will denote $\cat{E}\times \cat{F}$ by $\cat{E} \bigoplus \cat{F}$; $\cat{E} \bigoplus \cat{F}$ is again additive. When $\cat{E}$ and $\cat{F}$ are abelian, $\cat{E}\bigoplus \cat{F}$ is so as well.

\begin{proposition} \label{prop:30}
 Let $e\in \Ai$ be left special and left split.  Then, $\cat{M}(A) = \cat{M}(A)_{e} \bigoplus \cat{M}(A)_{e}^{\perp}$.
 \end{proposition}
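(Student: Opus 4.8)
The goal is to exhibit $\cat{M}(A)$ as a (direct) product of the full subcategories $\cat{M}(A)_{e}$ and $\cat{M}(A)_{e}^{\perp}$. Concretely, we must produce functors $\cat{M}(A)\to \cat{M}(A)_{e}$ and $\cat{M}(A)\to \cat{M}(A)_{e}^{\perp}$ which, together, give an equivalence $\cat{M}(A)\simeq \cat{M}(A)_{e}\bigoplus \cat{M}(A)_{e}^{\perp}$ inverse to the obvious functor $\cat{M}(A)_{e}\bigoplus \cat{M}(A)_{e}^{\perp}\to \cat{M}(A)$, $(X,Y)\mapsto X\oplus Y$. The first functor is $\Gamma_{e}$ from Lemma \ref{lem:03}; the second is $M\mapsto M/\Gamma_{e}(M)$, which lands in $\cat{M}(A)_{e}^{\perp}$ because $e$ is left special (so $\Gamma_e(M)=AeM$, and by left specialness $e(M/AeM)=0$, i.e. $M/\Gamma_e(M)$ has no nonzero $e$-fixed vectors). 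The key input making both composites work out is Corollary \ref{cor:12}: for left special and left split $e$, the canonical sequence $0\to \Gamma_{e}(M)\to M\to M/\Gamma_{e}(M)\to 0$ splits, giving a natural isomorphism $M\cong \Gamma_{e}(M)\oplus (M/\Gamma_{e}(M))$.

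**Steps.** First I would define the functor $\Phi:\cat{M}(A)\to \cat{M}(A)_{e}\bigoplus \cat{M}(A)_{e}^{\perp}$ by $\Phi(M)=(\Gamma_{e}(M), M/\Gamma_{e}(M))$, checking functoriality: $\Gamma_{e}$ is a functor by Lemma \ref{lem:03}, and $M\mapsto M/\Gamma_e(M)$ is functorial since any $f:M\to N$ satisfies $f(\Gamma_e(M))\subseteq \Gamma_e(N)$. Second, I would verify that $M/\Gamma_{e}(M)\in \cat{M}(A)_{e}^{\perp}$ as indicated above. Third, let $\Psi:\cat{M}(A)_{e}\bigoplus \cat{M}(A)_{e}^{\perp}\to \cat{M}(A)$ be $(X,Y)\mapsto X\oplus Y$ (well-defined since $\cat{M}(A)$ is closed under finite direct sums). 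The composite $\Psi\circ \Phi$ is naturally isomorphic to the identity by Corollary \ref{cor:12}, and one checks the isomorphism is natural in $M$ (the splitting is not canonical, but the existence of a natural iso $\Psi\Phi\cong \mathrm{id}$ follows because the functors $\Gamma_e(-)$ and $(-)/\Gamma_e(-)$ are the images of $M$ under the composites $\mathrm{inc}\circ\Gamma_e$ and the analogous idempotent for $\perp$; alternatively, directly: $\Gamma_e$ applied to $M\cong \Gamma_e(M)\oplus M/\Gamma_e(M)$ recovers $\Gamma_e(M)$ since $\Gamma_e$ kills the $\perp$-summand and is the identity on $\cat{M}(A)_e$, and similarly for the quotient). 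Fourth, for $\Phi\circ\Psi$: given $(X,Y)$ with $X\in\cat{M}(A)_e$, $Y\in\cat{M}(A)_e^{\perp}$, we have $\Gamma_e(X\oplus Y)=AeX\oplus AeY=X\oplus 0=X$ (using $X=AeX$ by Lemma \ref{lem:02} and $AeY=0$ since $eY=0$), hence $(X\oplus Y)/\Gamma_e(X\oplus Y)\cong Y$, so $\Phi\Psi\cong \mathrm{id}$ naturally.

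**Main obstacle.** The essential content — that the canonical sequence splits — is already handed to us by Corollary \ref{cor:12}, so no serious work remains there. The only mildly delicate point is organizing the naturality: the splitting in Corollary \ref{cor:12} is a choice, so one should argue the product decomposition via the functors $\Phi,\Psi$ and the identities $\Gamma_e|_{\cat{M}(A)_e}\cong \mathrm{id}$, $\Gamma_e|_{\cat{M}(A)_e^{\perp}}=0$, $\Gamma_e$ applied to a direct sum is the direct sum of $\Gamma_e$'s (the latter because $\Gamma_e(M)=AeM$ and $Ae(M_1\oplus M_2)=AeM_1\oplus AeM_2$), rather than trying to make the splitting itself natural. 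With these formal identities in hand the verifications that $\Phi$ and $\Psi$ are mutually quasi-inverse are routine, and the product decomposition follows.
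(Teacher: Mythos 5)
Your approach is essentially the paper's: produce the functors $\Phi(M)=(\Gamma_{e}(M),\,M/\Gamma_{e}(M))$ and $\Psi(X,Y)=X\oplus Y$ and show they are mutually inverse, with Corollary~\ref{cor:12} supplying the splitting. The verification that $\Phi\Psi\cong\mathrm{id}$ is correct as you wrote it, and your auxiliary observation that $M/\Gamma_e(M)\in\cat{M}(A)_e^{\perp}$ is right (though note this holds for \emph{any} idempotent, not just left special ones: $em\in AeM$ always).

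However, there is a genuine gap in your treatment of $\Psi\Phi\cong\mathrm{id}$ (equivalently, of the faithfulness of $\Phi$). You correctly flag that the splitting in Corollary~\ref{cor:12} is a choice, and you propose to sidestep naturality using the ``formal identities'' $\Gamma_e|_{\cat{M}(A)_e}\cong\mathrm{id}$, $\Gamma_e|_{\cat{M}(A)_e^{\perp}}=0$, and additivity of $\Gamma_e$. But these identities alone do not fix the isomorphism $M\cong\Gamma_e(M)\oplus M/\Gamma_e(M)$ or make it compatible with morphisms. The point that actually does the work is the vanishing of $\Hom$ between the two subcategories: if $s,s'\colon M/\Gamma_e(M)\to M$ are two sections of the projection, then $s-s'$ is a morphism $M/\Gamma_e(M)\to\Gamma_e(M)$, and this vanishes because $e$ annihilates the source and the target lies in $\cat{M}(A)_e$ --- which is precisely Lemma~\ref{lem:12}. (The proof of Lemma~\ref{lem:12} uses left specialness in an essential way: the image lies in $\cat{M}(A)_e$, so is generated by its $e$-fixed vectors, which are forced to vanish.) Uniqueness of the section then gives naturality of the decomposition for free. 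Likewise faithfulness of $\Phi$: if $\Gamma_e(f)=0$ and $\bar f=0$, then $f$ factors through a map $M/\Gamma_e(M)\to\Gamma_e(N)$, which is zero only by that same lemma. The paper's proof cites Lemmas~\ref{lem:11} and~\ref{lem:12} alongside Corollary~\ref{cor:12} for exactly this reason; you should invoke them as well, rather than relying on the formal identities, which on their own do not close the argument.
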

 \begin{proof}
 This follows from Lemma \ref{lem:11}, Lemma \ref{lem:12}, and Corollary \ref{cor:12}.
 \end{proof}
 
 \begin{corollary} \label{cor:41}
 Let $e\in \Ai$ be left special and left split. Then, the functor $\Gamma_{e}$ is exact.
\qed
\end{corollary}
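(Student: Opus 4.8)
The plan is to read off exactness of $\Gamma_{e}$ from the categorical decomposition established in Proposition \ref{prop:30}. Under the identification $\cat{M}(A) = \cat{M}(A)_{e}\bigoplus \cat{M}(A)_{e}^{\perp}$, Corollary \ref{cor:12} says that an object $M$ of $\cat{M}(A)$ corresponds to the pair $(\Gamma_{e}(M),\, M/\Gamma_{e}(M))$, with $\Gamma_{e}(M)\in \cat{M}(A)_{e}$ and $M/\Gamma_{e}(M)\in \cat{M}(A)_{e}^{\perp}$ (the latter because $e(M/\Gamma_{e}(M)) = eM/eM = 0$). Hence $\Gamma_{e}$ is naturally isomorphic to the composition of the equivalence $\cat{M}(A)\xrightarrow{\ \sim\ }\cat{M}(A)_{e}\bigoplus \cat{M}(A)_{e}^{\perp}$ with the projection onto the first factor $\cat{M}(A)_{e}$. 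A sequence in a product of abelian categories is exact precisely when each of its two projections is exact, so both projection functors are exact; composing with the exact equivalence above, $\Gamma_{e}$ is exact.

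Alternatively, one can argue directly, without appealing to the full decomposition: $\Gamma_{e}$ is left exact because it is a right adjoint (Lemma \ref{lem:03}), and it preserves epimorphisms because for any surjection $g\colon M\twoheadrightarrow N$ the image of $\Gamma_{e}(M)=AeM$ under $g$ is $Ae\,g(M) = AeN = \Gamma_{e}(N)$. An additive functor between abelian categories that is left exact and preserves epimorphisms is exact, so we are done.

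There is essentially no obstacle here: the corollary is immediate from Proposition \ref{prop:30}. The only point requiring a word of care is the identification of $\Gamma_{e}$ with the first projection under the decomposition, and that is exactly the content of Corollary \ref{cor:12} together with Definition \ref{dfn:30}. I would present the short direct argument of the second paragraph as the proof, or simply mark the corollary as an immediate consequence of Proposition \ref{prop:30}.
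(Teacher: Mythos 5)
Your first paragraph is precisely the argument the paper has in mind: the corollary is marked as an immediate consequence of Proposition~\ref{prop:30}, and the identification of $\Gamma_{e}$ with the projection onto the $\cat{M}(A)_{e}$-factor (via Corollary~\ref{cor:12} and Definition~\ref{dfn:30}) is exactly the intended reading. So that part is correct and matches the paper.

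Your second, direct argument is also correct, and it is worth flagging because it is genuinely different and in fact proves slightly more than the corollary claims. Left exactness of $\Gamma_{e}$ follows from Lemma~\ref{lem:03} alone: $\Gamma_{e}$ is right adjoint to the exact inclusion $\operatorname{inc}\colon \cat{M}(A)_{e}\to \cat{M}(A)$, hence preserves kernels, and kernels in $\cat{M}(A)_{e}$ agree with those in $\cat{M}(A)$ since $\cat{M}(A)_{e}$ is closed under subobjects. Preservation of epimorphisms follows from the identity $g(AeM)=Ae\,g(M)$ for $A$-linear $g$, as you observe. Neither step uses that $e$ is left split; only left specialness is invoked (to have the explicit description $\Gamma_{e}(M)=AeM$ and the right-adjoint property from Lemma~\ref{lem:03}). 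So your second argument shows that $\Gamma_{e}$ is exact for any left special $e$, whereas the route through Proposition~\ref{prop:30} requires $e$ to also be left split. The paper's approach is shorter given what has already been established, but yours is more elementary and isolates the weaker hypothesis that actually suffices.
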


 As $\Gamma_{e}$ is exact, it extends to a functor $\cat{D}(A)\to \cat{D}(A)$; we continue to denote this extension by $\Gamma_{e}$.
 We now extend Proposition \ref{prop:30} to the derived category. 
 
 \begin{lemma} \label{l:078}
 Let $e\in \Ai$ be left special and left split. 
 \begin{enumerate}
  \rmitem{i} Let $M\in \cat{K}(A)$ be K-injective. Then, $\Gamma_{e}(M)$ is K-injective.
  \rmitem{ii} Let $M\in \cat{K}(A)$ be K-projective. Then, $\Gamma_{e}(M)$ is K-projective.
 \end{enumerate}
 \end{lemma}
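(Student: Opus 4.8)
The plan is to deduce both statements from the direct-sum decomposition $\cat{M}(A) = \cat{M}(A)_{e}\bigoplus \cat{M}(A)_{e}^{\perp}$ established in Proposition~\ref{prop:30}. This decomposition induces, levelwise, a decomposition of every complex $M\in \cat{C}(A)$ as $M \cong \Gamma_{e}(M)\oplus (M/\Gamma_{e}(M))$, where $\Gamma_{e}(M)$ is the complex obtained by applying $\Gamma_{e}$ in each degree (legitimate since $\Gamma_{e}$ is exact by Corollary~\ref{cor:41}) and $M/\Gamma_{e}(M)$ is a complex in $\cat{M}(A)_{e}^{\perp}$. The key observation is that this decomposition is compatible with differentials: since any morphism from an object of $\cat{M}(A)_{e}$ to an object of $\cat{M}(A)_{e}^{\perp}$ vanishes (Lemma~\ref{lem:11}) and vice versa (Lemma~\ref{lem:12}), the differential of $M$ restricts to a differential on $\Gamma_{e}(M)$ and descends to one on $M/\Gamma_{e}(M)$, and there are no ``cross terms''. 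Hence $M \cong \Gamma_{e}(M) \oplus (M/\Gamma_{e}(M))$ as complexes, i.e.\ in $\cat{C}(A)$, and therefore in $\cat{K}(A)$.

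For (i): suppose $M\in \cat{K}(A)$ is K-injective. A direct summand of a K-injective complex is K-injective, because $\Hom_{A}(N, M_{1}\oplus M_{2}) \cong \Hom_{A}(N,M_{1})\oplus \Hom_{A}(N,M_{2})$ and a direct summand of an acyclic complex is acyclic. Applying this to the decomposition above, $\Gamma_{e}(M)$ is K-injective in $\cat{K}(A)$. Statement (ii) is entirely dual: a direct summand of a K-projective complex is K-projective, using that $\Hom_{A}(P_{1}\oplus P_{2}, N)\cong \Hom_{A}(P_{1},N)\oplus \Hom_{A}(P_{2},N)$, so if $M$ is K-projective then so is its summand $\Gamma_{e}(M)$.

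I expect the only point requiring any care to be the verification that the degreewise decomposition $M^{n}\cong \Gamma_{e}(M^{n})\oplus (M^{n}/\Gamma_{e}(M^{n}))$ assembles into a decomposition of $M$ \emph{as a complex} — that is, that $\Gamma_{e}(M)$, with differential inherited from $M$, really is a subcomplex and the complement is a quotient complex, with $M$ their direct sum in $\cat{C}(A)$. This is immediate once one notes that $\Gamma_{e}$ is a subfunctor of the identity (so $\partial_{M}$ restricts to $\Gamma_{e}(M)$ automatically) and that the splitting of $0\to \Gamma_{e}(M)\to M\to M/\Gamma_{e}(M)\to 0$ is functorial in $M$ — indeed, the retraction $M\to \Gamma_{e}(M)$ is simply the projection onto the $\cat{M}(A)_{e}$-component coming from Proposition~\ref{prop:30}, which is natural and hence commutes with the differentials. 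Everything else is a formal consequence of the behaviour of $\Hom_{A}$-complexes under finite direct sums, which is routine.
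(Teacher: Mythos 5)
Your proof is correct and takes essentially the same approach as the paper: decompose $M$ as $\Gamma_{e}(M)\oplus M/\Gamma_{e}(M)$ in $\cat{C}(A)$ via Proposition~\ref{prop:30}, then invoke the fact that direct summands of K-injective (resp.\ K-projective) complexes are K-injective (resp.\ K-projective). The paper states the decomposition more briefly, while you spell out the compatibility with differentials, but the substance is identical.
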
 
 \begin{proof}
 Let $M\in \cat{K}(A)$. By Proposition \ref{prop:30}, $M$ decomposes as a direct sum $M = \Gamma_{e}(M)\oplus \mathrm{Q}_{e}(M)$ in $\cat{C}(A)$, where $\mathrm{Q}_{e}(M)^{i}$ is the largest $A$-submodule of $M^{i}$ annihilated by $e$. Direct summands of K-injective and K-projective complexes are, respectively, K-injective and K-projective; the lemma follows.
 \end{proof}

\begin{definition} \label{dfn:50}
Denote the full triangulated subcategory $\{M\in \cat{D}(A)\ |\  \Gamma_{e}(M)\cong M\}$ by $\cat{D}(A)_{e}$.
\end{definition}

\begin{lemma} \label{l:079}
Let $e\in \Ai$ be left special and left split. Then, $\cat{D}(A)_{e} = \{M\in \cat{D}(A)\ \vert \ \operatorname{H}^{i}(M)\in \cat{M}(A)_{e},\  \forall\  i\in \mathbb{Z}\}.$
\end{lemma}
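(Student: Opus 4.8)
The plan is to prove the two inclusions separately, using that $\Gamma_e$ is exact (Corollary \ref{cor:41}) and the direct sum decomposition of Proposition \ref{prop:30}. First I would recall that since $e$ is left special and left split, every module $M\in\cat{M}(A)$ splits as $M\cong\Gamma_e(M)\oplus\mathrm{Q}_e(M)$ with $\Gamma_e(M)\in\cat{M}(A)_e$ and $\mathrm{Q}_e(M)\in\cat{M}(A)_e^{\perp}$ (see Proposition \ref{prop:30} and the proof of Lemma \ref{l:078}), and that $\Gamma_e$ is exact, hence commutes with cohomology: $\operatorname{H}^i(\Gamma_e(M))\cong\Gamma_e(\operatorname{H}^i(M))$ for any complex $M$.

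For the inclusion $\subseteq$: suppose $M\in\cat{D}(A)_e$, i.e. $\Gamma_e(M)\cong M$ in $\cat{D}(A)$. Applying $\operatorname{H}^i$ and using exactness of $\Gamma_e$ gives $\operatorname{H}^i(M)\cong\operatorname{H}^i(\Gamma_e(M))\cong\Gamma_e(\operatorname{H}^i(M))$. Since $\Gamma_e$ takes values in $\cat{M}(A)_e$, this shows $\operatorname{H}^i(M)\in\cat{M}(A)_e$ for all $i$, which is the right-hand side.

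For the reverse inclusion $\supseteq$: suppose $M\in\cat{D}(A)$ satisfies $\operatorname{H}^i(M)\in\cat{M}(A)_e$ for all $i$. I want to show the canonical map $\Gamma_e(M)\to M$ (coming from the counit $\operatorname{inc}\circ\Gamma_e\to\operatorname{id}$) is an isomorphism in $\cat{D}(A)$; equivalently, that it is a quasi-isomorphism, i.e. that $\operatorname{H}^i(\Gamma_e(M))\to\operatorname{H}^i(M)$ is an isomorphism for every $i$. By exactness of $\Gamma_e$ this map is $\Gamma_e(\operatorname{H}^i(M))\to\operatorname{H}^i(M)$, the counit applied to $\operatorname{H}^i(M)\in\cat{M}(A)_e$. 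But on objects already lying in $\cat{M}(A)_e$, the counit of the adjunction $(\operatorname{inc},\Gamma_e)$ is an isomorphism, since for $N\in\cat{M}(A)_e$ we have $N=AeN=\Gamma_e(N)$ by Lemma \ref{lem:02}. Hence the map is a quasi-isomorphism and $M\cong\Gamma_e(M)$ in $\cat{D}(A)$, so $M\in\cat{D}(A)_e$.

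I don't anticipate a serious obstacle here; the only point requiring a little care is confirming that the comparison morphism one wants to invert is indeed induced by the counit $\Gamma_e(M)\to M$ and that after passing to cohomology it becomes the counit on $\operatorname{H}^i(M)$ — this is exactly where exactness of $\Gamma_e$ (Corollary \ref{cor:41}) is used, so that $\Gamma_e$ descends to $\cat{D}(A)$ and commutes with the cohomology functors. Everything else is a direct application of Lemma \ref{lem:02} and Proposition \ref{prop:30}.
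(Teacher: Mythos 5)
Your proof is correct and follows essentially the same route as the paper: both directions rest on the exactness of $\Gamma_{e}$ (so that it commutes with $\operatorname{H}^{i}$) together with the observation that the counit $\Gamma_{e}(N)\to N$ is the identity for $N\in\cat{M}(A)_{e}$, which is Lemma \ref{lem:02}. You have simply spelled out the details that the paper's terser proof leaves to the reader.
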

\begin{proof}
First, suppose that $M$ lies in the class $\{M\in \cat{D}(A)\ \vert \ \operatorname{H}^{i}(M)\in \cat{M}(A)_{e},\  \forall\  i\in \mathbb{Z}\}.$ Since $\Gamma_{e}$ is exact, this implies that the canonical map $\Gamma_{e}(M)\to M$ is an isomorphism in $\cat{D}(A)$. 
Conversely, if $M\in \cat{D}(A)_{e}$, the canonical map $\Gamma_{e}(M)\to M$ is an isomorphism. By taking cohomology and again using the exactness of $\Gamma_{e}$, we see that $M$ belongs to  $\{M\in \cat{D}(A)\ \vert \ \operatorname{H}^{i}(M)\in \cat{M}(A)_{e},\  \forall\  i\in \mathbb{Z}\}.$
\end{proof}

\begin{definition} \label{d:079}
Let $e\in \Ai$ be left special and left split. Denote the full triangulated subcategory $\{M\in \cat{D}(A)\ |\ \operatorname{H}^{i}(M) \in \cat{M}(A)_{e}^{\perp},\ \forall\  i\in \mathbb{Z}\}$ by  $\cat{D}(A)_{e}^{\perp}$.
\end{definition}

\begin{lemma} \label{l:082}
Let $e\in \Ai$ be left special and left split.  Then, $\cat{D}(A)_{e}^{\perp} = \{M\in \cat{D}(A)\ |\ \mathrm{RHom}_{A}(Ae,M)= 0\} = \{M\in \cat{D}(A)\ |\ eA\otimes_{A}^{\mathrm{L}} M= 0\}.$
\end{lemma}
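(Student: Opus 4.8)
The plan is to show that all three classes coincide with
\[
\mathcal{Z}:=\{M\in \cat{D}(A)\ \vert\ e\operatorname{H}^{i}(M)=0\ \text{for all}\ i\in \mathbb{Z}\}.
\]
For $\cat{D}(A)_{e}^{\perp}$ this is immediate: unwinding Definition \ref{d:079} together with the description of $\cat{M}(A)_{e}^{\perp}$ in Definition \ref{dfn:30}, a complex $M$ lies in $\cat{D}(A)_{e}^{\perp}$ precisely when each $\operatorname{H}^{i}(M)$ is annihilated by $e$. The one elementary input needed for the other two classes is that the functor $M\mapsto eM$ on $\cat{M}(A)$ is exact; by Lemma \ref{lem:04} this functor is naturally isomorphic both to $\Hom_{A}(Ae,-)$ and to $eA\otimes_{A}(-)$, and its exactness is a direct check (alternatively, it follows from the projectivity of $Ae$, cf.\ Lemma \ref{l:402}). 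In particular, for any complex $N$ of objects of $\cat{M}(A)$ one has $\operatorname{H}^{k}(eN^{\bullet})\cong e\operatorname{H}^{k}(N)$, where $eN^{\bullet}$ denotes the complex with $k$-th term $eN^{k}$ and differentials restricted from those of $N$.

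Next I would observe that $Ae$, viewed as a complex concentrated in degree $0$, is K-projective in $\cat{K}(A)$: for acyclic $N\in\cat{K}(A)$ the complex $\Hom_{A}(Ae,N)\cong eN^{\bullet}$ has $\operatorname{H}^{k}=e\operatorname{H}^{k}(N)=0$ for all $k$. Hence for $M\in\cat{D}(A)$ one gets $\RHom_{A}(Ae,M)\cong\Hom_{A}(Ae,M)\cong eM^{\bullet}$ in $\cat{D}(\mathbb{K})$, so that $\operatorname{H}^{k}(\RHom_{A}(Ae,M))\cong e\operatorname{H}^{k}(M)$; thus $\RHom_{A}(Ae,M)=0$ if and only if $M\in\mathcal{Z}$. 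For the third class I would argue symmetrically: choose a K-projective resolution $P\to M$ in $\cat{K}(A)$ (which exists by Proposition \ref{prop:501}), so that $eA\otimes_{A}^{\mathrm{L}}M\cong eA\otimes_{A}P$. By Lemma \ref{lem:04}(iii) this is the complex $eP^{\bullet}$, whence $\operatorname{H}^{k}(eA\otimes_{A}^{\mathrm{L}}M)\cong e\operatorname{H}^{k}(P)\cong e\operatorname{H}^{k}(M)$, and again $eA\otimes_{A}^{\mathrm{L}}M=0$ if and only if $M\in\mathcal{Z}$. This gives the three claimed equalities.

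There is no serious obstacle here; the only points demanding care are the passage from the derived functors $\RHom_{A}(Ae,-)$ and $eA\otimes_{A}^{\mathrm{L}}(-)$ to the underived ones — legitimate because $Ae$ is K-projective and $e(-)$ is exact, so it preserves quasi-isomorphisms — and the interchange $\operatorname{H}^{k}(eN^{\bullet})\cong e\operatorname{H}^{k}(N)$, which is exactly the exactness of $M\mapsto eM$. (One should also check that the term-by-term isomorphisms of Lemma \ref{lem:04} are compatible with the differentials, but this is routine.) It is worth noting that the hypotheses that $e$ be left special and left split are used here only to make the notation $\cat{M}(A)_{e}^{\perp}$ and $\cat{D}(A)_{e}^{\perp}$ meaningful; the argument itself does not otherwise invoke them.
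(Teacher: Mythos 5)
Your proof is correct and takes essentially the same route as the paper: both arguments reduce everything to the observation that, since $Ae$ (resp.\ $eA$) is projective in $\cat{M}(A)$ (resp.\ $\cat{M}(A^{\mathrm{op}})$), one has $\operatorname{H}^{i}(\RHom_{A}(Ae,M))\cong \operatorname{H}^{i}(eA\otimes_{A}^{\mathrm{L}}M)\cong e\operatorname{H}^{i}(M)$ for all $i$. You have merely spelled out the K-projectivity bookkeeping that the paper leaves implicit; your closing remark that ``left special and left split'' enter only through the definition of $\cat{D}(A)_{e}^{\perp}$ is also accurate.
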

\begin{proof}
Since $Ae$ and $eA$ are projective objects in $\cat{M}(A)$ and $\cat{M}(A^{\mathrm{op}})$, respectively, for all $i\in \mathbb{Z}$ there are isomorphisms \[\operatorname{H}^{i}(\mathrm{RHom}_{A}(Ae,M)) \cong \operatorname{H}^{i}(eA\otimes_{A}^{\mathrm{L}} M) \cong e\operatorname{H}^{i}(M).\] The lemma follows. 
\end{proof}

\begin{lemma} \label{l:081}
Let $e\in \Ai$ be left special and left split, and suppose $M\in \cat{D}(A)_{e}$ and $N\in \cat{D}(A)_{e}^{\perp}.$ Then, \[\mathrm{RHom}_{A}(M,N) = \mathrm{RHom}_{A}(N,M) = 0.\]
\end{lemma}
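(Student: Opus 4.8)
The plan is to prove the vanishing of $\mathrm{RHom}_A(M,N)$ and $\mathrm{RHom}_A(N,M)$ for $M\in \cat{D}(A)_e$ and $N\in \cat{D}(A)_e^{\perp}$ by reducing to a suitable (co)resolution where one can apply the $\cat{M}(A)$-level vanishing already established in Lemma \ref{lem:11}, Lemma \ref{lem:12}, and Proposition \ref{prop:20}. The cleanest route uses the decomposition machinery: by Lemma \ref{l:078}, if we pick a K-injective resolution $N\to J$ of $N$ in $\cat{K}(A)$, then $\mathrm{Q}_e(J)$ (the complementary summand to $\Gamma_e(J)$, as in the proof of Lemma \ref{l:078}) is again K-injective; and since $N\in \cat{D}(A)_e^\perp$, Lemma \ref{l:079} and Lemma \ref{l:082} give that $\Gamma_e(N)\cong \Gamma_e(J)$ is acyclic, so $\mathrm{Q}_e(J)$ is itself a K-injective resolution of $N$, with every term annihilated by $e$. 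Dually, pick a K-projective resolution $P\to M$ in $\cat{K}(A)$; by Lemma \ref{l:078}, $\Gamma_e(P)$ is K-projective, and by Lemma \ref{l:079} (since $M\in \cat{D}(A)_e$, all cohomology lies in $\cat{M}(A)_e$, and $\Gamma_e$ is exact by Corollary \ref{cor:41}) the map $\Gamma_e(P)\to M$ is still a quasi-isomorphism; so $M$ has a K-projective resolution with every term in $\cat{M}(A)_e$.

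For $\mathrm{RHom}_A(N,M)$: compute it as $\Hom_A(N, \mathrm{Q}_e(J))$ — wait, more carefully, $\mathrm{RHom}_A(N,M)$ should be computed via a K-injective resolution of $M$, not $N$. So let me instead take $M\to I$ a K-injective resolution in $\cat{K}(A)$; by Lemma \ref{l:078} and the argument above (since $M\in\cat{D}(A)_e$), $\Gamma_e(I)$ is a K-injective resolution of $M$ with all terms in $\cat{M}(A)_e$. Then $\mathrm{RHom}_A(N,M) \cong \Hom_A(N, \Gamma_e(I))$, and since each $\Gamma_e(I)^i\in \cat{M}(A)_e$ while $eN = 0$ (as $N\in\cat{D}(A)_e^\perp$ forces $\operatorname{H}^i(N)$ to be $e$-torsion, but we need $eN=0$ termwise — here one should instead first replace $N$ by $\mathrm{Q}_e$ of a K-projective resolution $P'\to N$, which is a K-projective resolution of $N$ with all terms annihilated by $e$, by the dual of the above). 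Then $\Hom_A(\mathrm{Q}_e(P'), \Gamma_e(I))$ is termwise zero by Lemma \ref{lem:12}, hence acyclic, giving $\mathrm{RHom}_A(N,M) = 0$.

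For $\mathrm{RHom}_A(M,N)$: compute via the K-injective resolution $\mathrm{Q}_e(J)$ of $N$ (with all terms annihilated by $e$) obtained above, so $\mathrm{RHom}_A(M,N)\cong \Hom_A(M, \mathrm{Q}_e(J))$; then replace $M$ by its K-projective resolution $\Gamma_e(P)$ with all terms in $\cat{M}(A)_e$, giving $\Hom_A(\Gamma_e(P), \mathrm{Q}_e(J))$, which is termwise zero by Lemma \ref{lem:11} (each term of $\Gamma_e(P)$ is in $\cat{M}(A)_e$, and each term of $\mathrm{Q}_e(J)$ has $e$ acting as zero). Hence this complex is acyclic and $\mathrm{RHom}_A(M,N) = 0$.

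The main obstacle — or rather the main point requiring care — is the bookkeeping that passes between the derived-category conditions ($\operatorname{H}^i(M)\in\cat{M}(A)_e$, $\operatorname{H}^i(N)\in\cat{M}(A)_e^\perp$) and the termwise conditions needed to invoke Lemmas \ref{lem:11} and \ref{lem:12}. One must be careful to use the right flavour of resolution on each side (K-projective for the source, K-injective for the target of $\mathrm{RHom}$), to verify that applying $\Gamma_e$ or $\mathrm{Q}_e$ to such a resolution preserves both the (K-)projectivity/injectivity (Lemma \ref{l:078}) and the quasi-isomorphism property (Lemma \ref{l:079}, exactness of $\Gamma_e$), and to note that $\mathrm{Q}_e$ of a K-projective is K-projective and $\mathrm{Q}_e$ of a K-injective is K-injective — which follows exactly as in Lemma \ref{l:078} since $\mathrm{Q}_e(M)$ is also a direct summand of $M$ in $\cat{C}(A)$ by Proposition \ref{prop:30}. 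Once these reductions are in place, the termwise vanishing is immediate from the module-level lemmas, and acyclicity of a complex that is zero in every degree is trivial.

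\begin{proof}
We first show $\mathrm{RHom}_A(N,M) = 0$. Choose a K-projective resolution $P'\to N$ in $\cat{K}(A)$. By Proposition \ref{prop:30}, $P'$ decomposes in $\cat{C}(A)$ as $P' = \Gamma_e(P')\oplus \mathrm{Q}_e(P')$, where $\mathrm{Q}_e(P')^i$ is the largest $A$-submodule of $P'^i$ annihilated by $e$. As in the proof of Lemma \ref{l:078}, a direct summand of a K-projective complex is K-projective, so $\mathrm{Q}_e(P')$ is K-projective. Since $N\in \cat{D}(A)_e^{\perp}$, Lemma \ref{l:079} (applied with $\cat{M}(A)_e$ replaced by $\cat{M}(A)_e^{\perp}$, via Definition \ref{d:079}) together with the exactness of $\Gamma_e$ shows that $\Gamma_e(P')$ is acyclic; hence $\mathrm{Q}_e(P')\to N$ is a K-projective resolution of $N$ all of whose terms are annihilated by $e$. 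Next choose a K-injective resolution $M\to I$ in $\cat{K}(A)$. By Lemma \ref{l:078}(i), $\Gamma_e(I)$ is K-injective, and since $M\in \cat{D}(A)_e$, Lemma \ref{l:079} and the exactness of $\Gamma_e$ (Corollary \ref{cor:41}) show that the induced map $\Gamma_e(M)\to \Gamma_e(I)$, hence $M\to \Gamma_e(I)$, is a quasi-isomorphism; thus $\Gamma_e(I)$ is a K-injective resolution of $M$ all of whose terms lie in $\cat{M}(A)_e$. Therefore
\[
\mathrm{RHom}_A(N,M) \cong \Hom_A(\mathrm{Q}_e(P'), \Gamma_e(I)).
\]
For each pair of integers $i,j$, the term $\Hom_A(\mathrm{Q}_e(P')^i, \Gamma_e(I)^j)$ vanishes by Lemma \ref{lem:12}, since $\Gamma_e(I)^j\in \cat{M}(A)_e$ and $e\cdot \mathrm{Q}_e(P')^i = 0$. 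Hence $\Hom_A(\mathrm{Q}_e(P'), \Gamma_e(I))$ is the zero complex, and in particular acyclic, so $\mathrm{RHom}_A(N,M) = 0$.

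We now show $\mathrm{RHom}_A(M,N) = 0$. Choose a K-projective resolution $P\to M$ in $\cat{K}(A)$. By Lemma \ref{l:078}(ii), $\Gamma_e(P)$ is K-projective, and since $M\in \cat{D}(A)_e$, the argument above shows that $\Gamma_e(P)\to M$ is a quasi-isomorphism; thus $\Gamma_e(P)$ is a K-projective resolution of $M$ all of whose terms lie in $\cat{M}(A)_e$. Choose a K-injective resolution $N\to J$ in $\cat{K}(A)$. By Proposition \ref{prop:30}, $J = \Gamma_e(J)\oplus \mathrm{Q}_e(J)$ in $\cat{C}(A)$; as in the proof of Lemma \ref{l:078}, $\mathrm{Q}_e(J)$ is K-injective, and since $N\in \cat{D}(A)_e^{\perp}$, Lemma \ref{l:079} and the exactness of $\Gamma_e$ show that $\Gamma_e(J)$ is acyclic, so $\mathrm{Q}_e(J)\to N$ is a K-injective resolution of $N$ all of whose terms are annihilated by $e$. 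Therefore
\[
\mathrm{RHom}_A(M,N) \cong \Hom_A(\Gamma_e(P), \mathrm{Q}_e(J)).
\]
For each pair of integers $i,j$, the term $\Hom_A(\Gamma_e(P)^i, \mathrm{Q}_e(J)^j)$ vanishes by Lemma \ref{lem:11}, since $\Gamma_e(P)^i\in \cat{M}(A)_e$ and $e\cdot \mathrm{Q}_e(J)^j = 0$. Hence $\Hom_A(\Gamma_e(P), \mathrm{Q}_e(J))$ is the zero complex, and in particular acyclic, so $\mathrm{RHom}_A(M,N) = 0$.
\end{proof}
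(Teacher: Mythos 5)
Your proof is correct and follows essentially the same strategy as the paper's: decompose complexes into their $\Gamma_{e}$- and $\mathrm{Q}_{e}$-summands via Proposition \ref{prop:30}, use Lemma \ref{l:078} to preserve K-projectivity/K-injectivity, and reduce to termwise vanishing via Lemmas \ref{lem:11} and \ref{lem:12}. The only (cosmetic) difference is that the paper applies the decomposition to $N$ directly---replacing $N$ by $\mathrm{Q}_{e}(N)$, which is quasi-isomorphic to $N$ since $\Gamma_{e}(N)$ is acyclic---and then only needs to resolve $M$, whereas you first resolve $N$ and then apply $\mathrm{Q}_{e}$ to the resolution; this is slightly longer but equally valid.
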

\begin{proof}

As in the proof of Lemma \ref{l:078}, $N$ decomposes as a direct sum $N = \Gamma_{e}(N)\oplus \mathrm{Q}_{e}(N)$, where $\mathrm{Q}_{e}(N)$ is a complex each of whose terms is annihilated by $e$. Since $N\in \cat{D}(A)_{e}^{\perp}$, $\Gamma_{e}(N) = 0$ in $\cat{D}(A)$. Thus, there is no loss of generality in assuming that each term of $N$ is annihilated by $e$.

Let $P\to M$ and $M\to I$ be K-projective and K-injective resolutions of $M$, respectively. Since $M\in \cat{D}(A)_{e}$, Lemma \ref{l:082} implies that $M$ is actually isomorphic to both $\Gamma_{e}(P)$ and $\Gamma_{e}(I)$ in $\cat{D}(A)$; by Lemma \ref{l:078}, $\Gamma_{e}(P)$ and $\Gamma_{e}(I)$ are still, respectively, K-projective and K-injective. It follows that $\mathrm{RHom}_{A}(M,N) = \mathrm{Hom}_{A}(\Gamma_{e}(P),N)$, and $\mathrm{RHom}_{A}(N,M) = \mathrm{Hom}_{A}(N,\Gamma_{e}(I))$.

The result now follows from Lemma \ref{lem:11} and Lemma \ref{lem:12}.
\end{proof}

\begin{proposition} \label{cor:21}
 Let $e\in \Ai$ be left special and left split.  Then, $\cat{D}(A) = \cat{D}(A)_{e} \bigoplus \cat{D}(A)_{e}^{\perp}.$
\end{proposition}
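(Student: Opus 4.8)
The plan is to bootstrap the derived decomposition from the module-level one (Proposition \ref{prop:30}), passing through complexes exactly as was done in the proofs of Lemmas \ref{l:078} and \ref{l:081}. First I would record the functorial form of Proposition \ref{prop:30}: because $\cat{M}(A) = \cat{M}(A)_{e}\bigoplus \cat{M}(A)_{e}^{\perp}$, the assignments $M\mapsto \Gamma_{e}(M)$ and $M\mapsto \mathrm{Q}_{e}(M)$ (the largest submodule of $M$ annihilated by $e$) are \emph{exact} functors $\cat{M}(A)\to\cat{M}(A)$ — exactness of $\Gamma_{e}$ is Corollary \ref{cor:41}, and $\mathrm{Q}_{e}$ is the complementary projection, hence also exact — and they fit into a natural isomorphism $\mathrm{id}\cong \Gamma_{e}\oplus \mathrm{Q}_{e}$. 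Applying this termwise, every complex $M\in\cat{C}(A)$ splits as $M\cong \Gamma_{e}(M)\oplus \mathrm{Q}_{e}(M)$ in $\cat{C}(A)$ (the differential respects the splitting since it is a morphism), and, $\Gamma_{e}$ and $\mathrm{Q}_{e}$ being exact, this isomorphism descends to $\cat{K}(A)$ and then to $\cat{D}(A)$.

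Next I would verify essential surjectivity of the biproduct functor $\Phi\colon \cat{D}(A)_{e}\times\cat{D}(A)_{e}^{\perp}\to\cat{D}(A)$, $(M,N)\mapsto M\oplus N$. Given $M\in\cat{D}(A)$, the previous paragraph gives $M\cong \Gamma_{e}(M)\oplus \mathrm{Q}_{e}(M)$ in $\cat{D}(A)$. Exactness of $\Gamma_{e}$ yields $\operatorname{H}^{i}(\Gamma_{e}(M))=\Gamma_{e}(\operatorname{H}^{i}(M))\in\cat{M}(A)_{e}$ for all $i$, so $\Gamma_{e}(M)\in\cat{D}(A)_{e}$ by Lemma \ref{l:079}; likewise every term of $\mathrm{Q}_{e}(M)$ is annihilated by $e$, hence so is each cohomology module, so $\mathrm{Q}_{e}(M)\in\cat{D}(A)_{e}^{\perp}$ by Definition \ref{d:079}. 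Thus $M$ lies in the essential image of $\Phi$.

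For fullness and faithfulness I would invoke Lemma \ref{l:081}: for $M\in\cat{D}(A)_{e}$ and $N\in\cat{D}(A)_{e}^{\perp}$ we have $\mathrm{RHom}_{A}(M,N)=\mathrm{RHom}_{A}(N,M)=0$, and applying $\operatorname{H}^{0}$ gives $\Hom_{\cat{D}(A)}(M,N)=\Hom_{\cat{D}(A)}(N,M)=0$. Since $\cat{D}(A)$ is additive, $\Hom_{\cat{D}(A)}(M\oplus N, M'\oplus N')$ decomposes as the direct sum of the four Hom-groups among the summands, and the two cross terms vanish by the above; hence $\Phi$ induces an isomorphism on all Hom-groups. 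Combined with essential surjectivity, $\Phi$ is an equivalence, which is precisely the statement $\cat{D}(A)=\cat{D}(A)_{e}\bigoplus\cat{D}(A)_{e}^{\perp}$.

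The substance of the argument is already contained in Proposition \ref{prop:30} and Lemmas \ref{l:079} and \ref{l:081}, so what remains is essentially bookkeeping; the only point that needs a little care is the passage of the natural splitting $\mathrm{id}\cong\Gamma_{e}\oplus\mathrm{Q}_{e}$ from $\cat{M}(A)$ up through $\cat{C}(A)$ and $\cat{K}(A)$ to $\cat{D}(A)$, together with the identification of $\mathrm{Q}_{e}$-complexes as objects of $\cat{D}(A)_{e}^{\perp}$ — both of which are immediate consequences of the exactness of $\Gamma_{e}$ and $\mathrm{Q}_{e}$.
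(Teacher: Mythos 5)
Your argument is correct and follows essentially the same route as the paper: the paper's proof simply cites Lemma \ref{l:079} and Lemma \ref{l:081}, implicitly relying on the termwise splitting $M\cong\Gamma_{e}(M)\oplus\mathrm{Q}_{e}(M)$ already displayed in the proofs of Lemmas \ref{l:078} and \ref{l:081}. You have merely spelled out the essential-surjectivity step (identifying $\Gamma_{e}(M)\in\cat{D}(A)_{e}$ and $\mathrm{Q}_{e}(M)\in\cat{D}(A)_{e}^{\perp}$) that the paper leaves tacit.
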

\begin{proof}
This follows from Lemma \ref{l:079} and Lemma \ref{l:081}.
\end{proof}

\begin{remark}
Proposition \ref{cor:21} provides a derived analogue of Bernstein's decomposition of categories when the algebra $A$ is the Hecke algebra of a connected reductive algebraic group defined over a non-archimedean local field.
\end{remark}

\begin{theorem} \label{prop:51}
 Let $e\in \Ai$ be left special and left split. There is a commutative diagram of functors:
 \[  
\begin{tikzcd}[row sep=huge, column sep = huge]
{} & \cat{D}(A) \arrow[yshift = -.7ex]{ld}{\Gamma_{e}} \arrow[yshift = -.7ex]{rd}[swap]{\mathrm{RHom}_{A}(Ae, -)} &  {}  \\
\cat{D}(A)_{e} \arrow[yshift = .7ex]{ur}{\mathrm{inc}}  \arrow[yshift = .7ex]{rr}{\mathrm{RHom}_{A}(Ae, -)} & {}  & \cat{D}(eAe)  \arrow[yshift = .7ex]{lu}[swap]{A\otimes^{\mathrm{L}}_{eAe} (-)} \arrow[yshift = -.7ex]{ll}{A\otimes^{\mathrm{L}}_{eAe} (-)}
 \end{tikzcd}
 \]
 The functor \[\mathrm{RHom}_{A}(Ae, -): \cat{D}(A)_{e}\to \cat{D}(eAe)\] is an equivalence of categories, with quasi-inverse \[A\otimes^{\mathrm{L}}_{eAe} (-): \cat{D}(eAe)\to \cat{D}(A)_{e}.\]
\end{theorem}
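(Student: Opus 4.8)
The plan is to mimic the proof of Theorem \ref{prop:02} as closely as possible, replacing the abelian-categorical input with its derived analogue, and to exploit the direct-sum decomposition $\cat{D}(A) = \cat{D}(A)_{e}\bigoplus \cat{D}(A)_{e}^{\perp}$ from Proposition \ref{cor:21}. First I would record that by Corollary \ref{cor:41} the functor $\Gamma_{e}$ is exact, hence descends to $\cat{D}(A)$, and by Lemma \ref{l:079} its essential image is exactly $\cat{D}(A)_{e}$, with the canonical map $\Gamma_e \to \operatorname{id}$ an isomorphism on $\cat{D}(A)_e$; this makes $\operatorname{inc}\dashv \Gamma_e$ an adjunction on the derived level. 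Next I would build the three adjunctions on the sides of the triangle. The slanted right-hand adjunction $\mathrm{RHom}_A(Ae,-) \dashv A\otimes^{\mathrm L}_{eAe}(-)$ between $\cat{D}(A)$ and $\cat{D}(eAe)$ is a special case of Corollary \ref{corr:adm_1_der}: taking $\phi$ to be the inclusion $eAe \hookrightarrow A$, part (ii) gives $\Hom_{\cat{D}(A)}(A\otimes^{\mathrm L}_{eAe}(-), -)\cong \Hom_{\cat{D}(eAe)}(-, \nu_{eAe}(-))$, and since $\nu_{eAe}$ applied to an $A$-complex is computed by $\nu_{eAe}(\Hom_A(A, I)) = \mathrm{Hom}_A(Ae, I)$ on a K-injective resolution (using Lemma \ref{lem:04}(i) termwise and the identification $\nu_{eAe}\circ\RHom_A(A,-)$ as in Lemma \ref{lem:103}(c.)), this identifies the right adjoint as $\mathrm{RHom}_A(Ae,-)$. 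Finally the bottom adjunction $\operatorname{H}\dashv \mathrm{RHom}_A(Ae,-)$ between $\cat{D}(A)_e$ and $\cat{D}(eAe)$: one gets this by combining the equivalence $\mathrm{Hom}_A(Ae,-)$ of Theorem \ref{prop:02} (which is exact, since $Ae$ is projective in $\cat{M}(A)_e$) with its quasi-inverse $A\otimes_{eAe}(-)$ — but to say this equivalence of abelian categories induces an equivalence of derived categories one argues that $\mathrm{Hom}_A(Ae,-):\cat{M}(A)_e \to \cat{M}(eAe)$ is exact and sends the projective generator $Ae$ to the projective generator $eAe$ (and similarly on injectives), so it extends to an equivalence $\cat{D}(\cat{M}(A)_e)\to \cat{D}(eAe)$; then one must check that the canonical functor $\cat{D}(\cat{M}(A)_e)\to \cat{D}(A)_e$ is itself an equivalence, which follows from Proposition \ref{cor:21} together with Lemma \ref{l:078}, since $\cat{M}(A)_e$ is a direct factor of $\cat{M}(A)$ whose inclusion preserves both K-injectives and K-projectives.

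With the three adjunctions in place, I would argue exactly as in Theorem \ref{prop:02}: each inner arrow of the triangle is right adjoint to the corresponding outer arrow, the inner triangle $\mathrm{RHom}_A(Ae,-)\circ\operatorname{inc}$ versus $\mathrm{RHom}_A(Ae,-)$ commutes by construction, so the outer triangle commutes up to natural isomorphism because adjoints compose uniquely. This gives $\operatorname{inc}\circ\operatorname{H}\cong A\otimes^{\mathrm L}_{eAe}(-)$ and, after restricting, $\operatorname{H}\cong A\otimes^{\mathrm L}_{eAe}(-)$ as functors $\cat{D}(eAe)\to \cat{D}(A)_e$. Since $\operatorname{H}$ is a quasi-inverse to $\mathrm{RHom}_A(Ae,-)$, this establishes the equivalence and identifies the quasi-inverse as claimed. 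An alternative, more hands-on route to the equivalence — which I might use to sidestep subtleties about derived categories of the non-standard abelian subcategory $\cat{M}(A)_e$ — is to verify directly that the unit and counit of the adjunction $\mathrm{RHom}_A(Ae,-)\dashv A\otimes^{\mathrm L}_{eAe}(-)$ are isomorphisms when restricted to $\cat{D}(A)_e$ and all of $\cat{D}(eAe)$: for the counit $A\otimes^{\mathrm L}_{eAe}\mathrm{RHom}_A(Ae,M)\to M$ with $M\in\cat{D}(A)_e$, pick a K-projective resolution $P\to M$ with $P$ a complex in $\cat{M}(A)_e$ (possible by the direct-sum decomposition and Lemma \ref{l:078}(ii)); then termwise each $P^i$ is a quotient of a sum of copies of $Ae$ and the claim reduces to the module-level statement $Ae\otimes_{eAe}\mathrm{Hom}_A(Ae,P^i)\cong P^i$ from Theorem \ref{prop:02}, assembled over the complex. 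For the unit $eAe$-side one similarly uses $\mathrm{Hom}_A(Ae, A\otimes_{eAe} Q)\cong eAe\otimes_{eAe} Q \cong Q$ for projective $Q$, noting $A\otimes_{eAe}Q = Ae\otimes_{eAe}Q$ lands in $\cat{M}(A)_e$ and is K-projective.

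The main obstacle I anticipate is the passage between $\cat{D}(\cat{M}(A)_e)$ — the derived category of the abelian category $\cat{M}(A)_e$ in its own right — and the subcategory $\cat{D}(A)_e\subseteq \cat{D}(A)$; one needs that a quasi-isomorphism in $\cat{C}(\cat{M}(A)_e)$ is the same as one in $\cat{C}(A)$ (which is immediate since the inclusion is exact), and that every object of $\cat{D}(A)_e$ is represented by a complex in $\cat{M}(A)_e$ and every K-projective/K-injective resolution can be taken inside $\cat{M}(A)_e$. This is precisely what Proposition \ref{cor:21} and Lemma \ref{l:078} are designed to supply — $M\in\cat{D}(A)_e$ decomposes off its $\cat{M}(A)_e^{\perp}$-part, and the surviving part has a resolution whose terms lie in $\cat{M}(A)_e$ because $\Gamma_e$ of a K-projective (resp. K-injective) complex is again such — so the obstacle is really bookkeeping rather than a genuine difficulty. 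A secondary point requiring care is checking that $\mathrm{RHom}_A(Ae,-)$ and $A\otimes^{\mathrm L}_{eAe}(-)$ are the right derived functors one expects, i.e. that $Ae$ (resp. $eA$) being projective makes these computable without further resolution in the relevant variable; this is exactly the content of the isomorphisms in Lemma \ref{l:082} and is routine. I would present the proof in the adjoint-composition style of Theorem \ref{prop:02}, relegating the two verifications above to short remarks citing Proposition \ref{cor:21}, Lemma \ref{l:078}, and Lemma \ref{l:082}.
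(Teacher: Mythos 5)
Your proposal is correct, and the scaffolding — three adjunctions with inner arrows right adjoint to outer arrows, plus the observation that the inner triangle commutes by construction, so the outer one does by uniqueness of adjoints — is exactly the paper's. Where you diverge is in how the bottom edge is shown to be an equivalence. The paper invokes Keller's theorem \cite[Theorem 3.3]{Ke}: it observes that $Ae$ is a compact generator of the algebraic triangulated category $\cat{D}(A)_{e}$ (generation following from Lemma \ref{l:082} and Proposition \ref{cor:21}), and concludes that $\mathrm{RHom}_{A}(Ae,-)$ is an equivalence onto $\cat{D}(\mathrm{RHom}_A(Ae,Ae)) \simeq \cat{D}(eAe)$ since $\mathrm{End}_A(Ae)^{\mathrm{op}} \cong eAe$ and $Ae$ is projective. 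You instead bootstrap from the abelian-level Morita equivalence of Theorem \ref{prop:02}: the exact equivalence $\cat{M}(A)_e \simeq \cat{M}(eAe)$ induces an equivalence $\cat{D}(\cat{M}(A)_e) \simeq \cat{D}(eAe)$, and the canonical functor $\cat{D}(\cat{M}(A)_e) \to \cat{D}(A)_e$ is an equivalence because $\cat{M}(A)_e$ is a direct factor of $\cat{M}(A)$ (Proposition \ref{prop:30}) whose inclusion preserves K-projectives and K-injectives (Lemma \ref{l:078}). Both routes are valid; the paper's is shorter but rests on the machinery of algebraic triangulated categories and tilting theory, while yours is more elementary and stays within the tools developed in \S\ref{sect_prelim}--\S\ref{sid}. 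Your secondary hands-on verification of the unit and counit is essentially what the paper gestures at with ``can be argued as in Theorem \ref{prop:02}'' for identifying the quasi-inverse, though the paper treats it as a postscript once Keller's theorem has already supplied the equivalence, whereas you can use it as the whole argument. One small caveat on phrasing: in your counit check you say ``each $P^i$ is a quotient of a sum of copies of $Ae$'' — what you actually need (and what is true, by Theorem \ref{prop:02}) is the isomorphism $Ae\otimes_{eAe}\Hom_A(Ae,P^i)\cong P^i$ for arbitrary $P^i\in\cat{M}(A)_e$, not a presentation of $P^i$; the quotient remark is unnecessary and slightly misleading.
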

\begin{proof}
The proof of this result is similar to  that of Theorem \ref{prop:02}. Since the functor $\Gamma_{e}: \cat{D}(A)\to \cat{D}(A)$ is idempotent, the functor $\operatorname{inc}$ is left adjoint to $\Gamma_{e}$. By Corollary \ref{corr:adm_1_der}, we know that the functor  $A\otimes^{\mathrm{L}}_{eAe} (-)$ is left adjoint to $\mathrm{RHom}_{A}(Ae, -)$ as functors between $\cat{D}(A)$ and $\cat{D}(eAe)$.

We are left with considering the functors on the lower edge of the diagram. If $M \in \cat{D}(A)_{e}$ is such that $\mathrm{RHom}_{A}(Ae,M) = 0$, then $M = 0$ by Lemma \ref{l:082} and Proposition \ref{cor:21}. From this, we see that $Ae$ is a generator for the category $\cat{D}(A)_{e}$; it is also compact. Since the triangulated category $\cat{D}(A)_{e}$ is algebraic (\cite[Lemma 7.5]{Kr}), by \cite[Theorem 3.3]{Ke} the functor $\mathrm{RHom}_{A}(Ae,-)$ establishes an equivalence of categories between $\cat{D}(A)_{e}$ and $\cat{D}(\mathrm{RHom}_{A}(Ae,Ae))^{\mathrm{op}}$. Since $Ae$ is projective, $\mathrm{RHom}_{A}(Ae,Ae)$ is nothing by $eAe^{\mathrm{op}}$ (Lemma \ref{lem:04}). The fact that $A\otimes^{\mathrm{L}}_{eAe} (-)$ is quasi-inverse to $\mathrm{RHom}_{A}(Ae, -)$ can be argued as in Theorem \ref{prop:02}.
\end{proof}

\subsection{Decompositions}\label{ss:decom}

\begin{lemma} \label{lem:891}
Let $e,e' \in \Ai$ be left special. Then, $eAe' = 0$ if and only if $\cat{M}(A)_{e}\cap \cat{M}(A)_{e'} = 0$.
\end{lemma}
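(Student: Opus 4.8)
The plan is to prove both implications directly, working with the module-theoretic characterizations of left special idempotents established earlier in the excerpt.

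First, suppose $eAe' = 0$; I want to show $\cat{M}(A)_e \cap \cat{M}(A)_{e'} = 0$. Take $M \in \cat{M}(A)_e \cap \cat{M}(A)_{e'}$ and suppose $M \neq 0$. Since $e'$ is left special, Lemma \ref{lem:02} gives $M = Ae'M$, so there is a nonzero element of the form $ae'm$ with $a \in A$, $m \in M$; in particular $e'M \neq 0$. On the other hand, $M \in \cat{M}(A)_e$ means every nonzero subquotient of $M$ admits an $e$-fixed element (Lemma \ref{lem:10}); applying this to the nonzero submodule $Ae'm$ for a suitable $m$ with $e'm \neq 0$, and using that $Ae$ is projective so that the $e$-fixed element of an irreducible quotient of $Ae'm$ lifts (as in the proof of Lemma \ref{lem:10}), we find an element $x \in Ae'm \subseteq Ae'M$ with $ex = x$ and $x \neq 0$. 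But $ex \in eAe'M = (eAe')M = 0$, a contradiction. Hence $M = 0$.

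For the converse, suppose $eAe' \neq 0$; I want to produce a nonzero module in $\cat{M}(A)_e \cap \cat{M}(A)_{e'}$. The natural candidate is $\Gamma_e(\Gamma_{e'}(Ae))$, or more symmetrically, I would consider the module $M := Ae \cap AeAe'$ type constructions — but the cleanest route is: since $eAe' \neq 0$ there is $a \in A$ with $eae' \neq 0$. Consider the cyclic submodule $N := A(eae') \leq Ae'$ (note $eae' \in Ae' $ since $eae' = eae'e'$ using that $e'$ is idempotent — wait, need $e'$ such that $eae'e' = eae'$, which holds). Then $N \neq 0$ and $N = AeN$ is false in general, so instead set $M := Ae(eAe')$, the submodule of $Ae'$ generated by $e(Ae')$; concretely $M = AeAe'$. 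This $M$ is nonzero (it contains $eae' \neq 0$) and by construction $M = AeM$, so $M \in \cat{M}(A)_e$ by Lemma \ref{lem:02} since $e$ is left special. Symmetrically, $M = AeAe' \subseteq Ae'$, and I claim $M = Ae'M$: indeed every element of $M$ is a sum of terms $a e b e'$, and $aebe' = aeb e' \cdot e' $, so $M$ is generated by $e'$-fixed elements... more carefully, $M \subseteq Ae'$ gives that $M$ consists of elements $m$ with... hmm, $Ae'$ itself need not satisfy $Ae' = Ae'(Ae')$. The right statement: since $M = AeAe'$ and each generator $eae'$ satisfies $eae' = eae' e'$ so lies in $Me'$? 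No — I would instead observe $M = MAe'$ fails. The correct symmetric argument: $M$ is also generated as a left module by $(eAe')$ which equals $(eAe')e' \subseteq Me'$, hence $M = AeAe' = A(eAe')e' \subseteq A M e'$, giving $M = AMe' = A e' M'$... This needs the symmetric version, so I would instead take $M' := Ae' A e \cap$ —

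The main obstacle is arranging a single nonzero module lying in \emph{both} $\cat{M}(A)_e$ and $\cat{M}(A)_{e'}$; the fix is to use the left special hypothesis on \emph{both} idempotents and the fact (Lemma \ref{lem:10}) that membership in $\cat{M}(A)_e$ is detected on irreducible subquotients. Concretely, take any nonzero $M_0 := Ae'(eAe') \neq 0$ inside $Ae'$; then $M_0 = Ae' M_0$ wait this still is not obviously in $\cat{M}(A)_{e'}$ ... The clean approach: since $e'$ is left special, by Lemma \ref{lem:02} the module $Ae'$ lies in $\cat{M}(A)_{e'}$, because $Ae' = Ae'(Ae')$? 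That holds iff $e' \in Ae'e' \cdot$ — yes $e' = e' \cdot e'$, so $e' \in e'(Ae')$, hence $Ae' = Ae'(Ae')$ no that's not right either since $e'(Ae')$ means $\{e' a e' : a\}$ which contains $e'$. So $Ae' \ni a \cdot e'$ and we want $ae' \in Ae'(Ae')$: $ae' = a e' \cdot e'e'$... yes $ae' = (ae')(e') $ and $e' \in e'(Ae')$, so $ae' \in Ae'(Ae') = Ae' e' A e'$. Since $e$ is left special, $Ae \in \cat{M}(A)_e$. I would then argue: let $P \leq Ae'$ be a nonzero $A$-submodule of $Ae'$ that is also generated by $e$-fixed elements — take $P := AeAe' \neq 0$. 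Then $P \in \cat{M}(A)_e$ (left special + $P = AeP$). Also $P \leq Ae' \in \cat{M}(A)_{e'}$, and since $\cat{M}(A)_{e'}$ is closed under subobjects (Remark \ref{rem_lem:10b}), $P \in \cat{M}(A)_{e'}$. Hence $0 \neq P \in \cat{M}(A)_e \cap \cat{M}(A)_{e'}$. This completes the argument; I would double-check that $P = AeP$, which holds because every generator $eae'$ of $P$ is $e$-fixed.
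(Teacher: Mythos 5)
Your argument is correct, but both directions are substantially more labored than they need to be, and the write-up contains many false starts that would need to be excised. For the forward direction, observe that the paper's one-liner works immediately: if $N \in \cat{M}(A)_e \cap \cat{M}(A)_{e'}$, then $N = AeN = Ae(Ae'N) = AeAe'N$ by Lemma~\ref{lem:02}, and $eAe' = 0$ forces $AeAe' = 0$, hence $N = 0$. Your detour through a cyclic submodule $Ae'm$, the subquotient criterion of Lemma~\ref{lem:10}, and projectivity of $Ae$ arrives at the same conclusion but with several unnecessary steps. For the converse, the final version of your construction is right: $P := AeAe'$ is nonzero when $eAe' \neq 0$ (since $eAe' \subseteq P$), satisfies $P = AeP$ because its generating set $eAe'$ consists of $e$-fixed elements (so $P \in \cat{M}(A)_e$ by Lemma~\ref{lem:02}), and sits inside $Ae' \in \cat{M}(A)_{e'}$, which is closed under subobjects (Remark~\ref{rem_lem:10b}), so $P \in \cat{M}(A)_{e'}$. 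This is morally identical to the paper's route, which instead notes that $eAe' \cong \Hom_A(Ae, Ae')$ by Lemma~\ref{lem:pre_5}(i), and that any $f(Ae)$ with $f \in \Hom_A(Ae,Ae')$ lies in $\cat{M}(A)_e$ (quotient of $Ae$) and in $\cat{M}(A)_{e'}$ (submodule of $Ae'$); the paper's packaging via the Hom-module isomorphism is slightly slicker, since it avoids having to verify $P = AeP$ by hand. In a cleaned-up write-up you should retain only the final paragraph of the converse argument and replace the forward direction with the one-line computation.
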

\begin{proof}
Suppose $eAe'=0$, and let $N\in \cat{M}(A)_{e}\cap \cat{M}(A)_{e'}$. Then, by definition, $N = AeAe'N$. Thus $N=0$. Conversely, suppose $\cat{M}(A)_{e}\cap \cat{M}(A)_{e'} = 0$. If $f\in \mathrm{Hom}_{A}(Ae,Ae')$, $f(Ae)\in \cat{M}(A)_{e}\cap \cat{M}(A)_{e'}$. Thus $\mathrm{Hom}_{A}(Ae,Ae')=0$. By Lemma \ref{lem:pre_5}, $eAe' \cong \mathrm{Hom}_{A}(Ae,Ae')$. This completes the argument.
\end{proof}

\begin{corollary} \label{cor:892}
Let $e,e' \in \Ai$ be left special. Then, $eAe' = 0$ if and only if $e'Ae = 0$.
\qed
\end{corollary}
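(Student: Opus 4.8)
The plan is to deduce this immediately from Lemma \ref{lem:891} by exploiting the evident symmetry of the condition appearing on its right-hand side. Lemma \ref{lem:891} tells us that for left special idempotents $e, e'$, the vanishing $eAe' = 0$ is equivalent to $\cat{M}(A)_{e}\cap \cat{M}(A)_{e'} = 0$. The key observation is that the intersection $\cat{M}(A)_{e}\cap \cat{M}(A)_{e'}$ is manifestly symmetric in $e$ and $e'$: it is literally the same full subcategory of $\cat{M}(A)$ regardless of the order in which we list the two idempotents.

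Concretely, first I would apply Lemma \ref{lem:891} to the pair $(e,e')$ to get that $eAe' = 0$ if and only if $\cat{M}(A)_{e}\cap \cat{M}(A)_{e'} = 0$. Then I would apply the same lemma to the pair $(e',e)$ — both $e'$ and $e$ being left special by hypothesis — to obtain that $e'Ae = 0$ if and only if $\cat{M}(A)_{e'}\cap \cat{M}(A)_{e} = 0$. Since $\cat{M}(A)_{e}\cap \cat{M}(A)_{e'}$ and $\cat{M}(A)_{e'}\cap \cat{M}(A)_{e}$ denote one and the same subcategory, the two right-hand conditions coincide, and hence so do the two left-hand conditions; that is, $eAe' = 0$ if and only if $e'Ae = 0$.

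There is essentially no obstacle here: the content has already been extracted in Lemma \ref{lem:891}, and all that remains is the trivial remark that set-theoretic intersection does not depend on the order of its arguments. The only thing worth double-checking is that the hypotheses of Lemma \ref{lem:891} are satisfied for the swapped pair, which they are, since "left special" is a property of a single idempotent and both $e$ and $e'$ are assumed to have it.
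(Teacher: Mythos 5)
Your proposal is correct and is exactly the argument the paper intends: the corollary is stated with an immediate \qed precisely because Lemma \ref{lem:891} gives an order-symmetric characterization of the vanishing, and swapping $e$ and $e'$ is all that is needed.
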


\begin{definition} \label{def:893}
Let $e,e' \in \Ai$ be left special. Then $e$ and $e'$ are said to be \emph{strongly orthogonal} if $eAe' = e'Ae = 0$.
\end{definition}

\begin{lemma} \label{lem:894}
Suppose $e,e' \in \Ai$ are strongly orthogonal. Let $M\in \cat{M}(A)_{e}$ and $N\in \cat{M}(A)_{e'}$. Then, $\Hom_{A}(M,N) = 0$.
\end{lemma}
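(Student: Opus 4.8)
The plan is to reduce the vanishing of $\Hom_A(M,N)$ to the known fact that morphisms out of $M \in \cat{M}(A)_e$ land inside the $e$-generated part of the target, combined with the hypothesis $eAe' = 0$. First I would observe that since $M \in \cat{M}(A)_e$, we have $M = AeM$; hence for any $f \colon M \to N$, the image $f(M) = f(AeM) = Aef(M) \subseteq AeN$. On the other hand, $N \in \cat{M}(A)_{e'}$ means $N = Ae'N$, so every element of $N$ is an $A$-linear combination of elements of $e'N$. The point is then to show $Ae \cdot (Ae'N) = 0$, which is immediate from $AeAe' = 0$ — and this last equality follows from $eAe' = 0$ since $AeAe'N \subseteq A(eAe')N = 0$ once we absorb the left $A$ into itself (more carefully: any element of $AeAe'N$ has the form $\sum a_i e b_i e' n_i$ with $e b_i e' = 0$).

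Concretely, the key steps in order are: (1) use $M = AeM$ to deduce $f(M) \subseteq AeN$ for every $A$-homomorphism $f \colon M \to N$; (2) use $N = Ae'N$ together with $eAe' = 0$ (part of strong orthogonality) to conclude $AeN = AeAe'N = 0$; (3) combine to get $f(M) = 0$, i.e. $f = 0$. Alternatively, and perhaps more cleanly, I would invoke Lemma \ref{lem:pre_5}: $\Hom_A(Ae, N) \cong eN$, and $eN = eAe'N = (eAe')N = 0$ by strong orthogonality; then since $Ae$ is a projective generator for $\cat{M}(A)_e$ (Lemma \ref{lem:05}) and $M \in \cat{M}(A)_e$, a nonzero $f \colon M \to N$ would produce, after precomposing with a suitable surjection from a direct sum of copies of $Ae$ onto $M$, a nonzero map $Ae \to N$, contradicting $\Hom_A(Ae, N) = 0$. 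Either route is short.

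I do not anticipate a genuine obstacle here; the statement is essentially a formal consequence of the defining property of $\cat{M}(A)_e$ (closure of $AeM$-generation, as packaged in Lemma \ref{lem:03} via $\Gamma_e$) and the hypothesis. The only mild care needed is the bookkeeping in step (2): one should note that $eAe' = 0$ implies $e(Ae'N) = (eAe')N = 0$, and then that $AeN = 0$ forces $\Gamma_e(N) = 0$, so any map from an object of $\cat{M}(A)_e$ into $N$ — which by Lemma \ref{lem:03} factors through $\Gamma_e(N)$ — must vanish. Indeed, the slickest phrasing is: by Lemma \ref{lem:03}, every $f \colon M \to N$ with $M \in \cat{M}(A)_e$ factors through $\Gamma_e(N) = AeN$; but $AeN = AeAe'N = 0$ by strong orthogonality, so $f = 0$. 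I would write the proof in this last form, as a two-line argument citing Lemma \ref{lem:03} and Definition \ref{def:893}.
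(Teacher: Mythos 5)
Your proof is correct and follows essentially the same route as the paper: every morphism $f\colon M\to N$ has image contained in $\Gamma_e(N)=AeN$, and $AeN=AeAe'N=0$ by strong orthogonality. The paper packages this by noting that $f(M)\in\cat{M}(A)_e\cap\cat{M}(A)_{e'}$ and citing Lemma \ref{lem:891}, whose proof is precisely the $AeAe'=0$ computation you carry out directly, so the two arguments are the same up to how the intermediate lemma is invoked.
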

\begin{proof}
Suppose $f\in \Hom_{A}(M,N)$. Then, $f(M) \in \cat{M}(A)_{e}\cap \cat{M}(A)_{e'}$, so by Lemma \ref{lem:891} we get that $f=0$.
\end{proof}

\begin{theorem} \label{prop:896}
Let $\mathcal{E} = \{e_{i}\}_{i\in I}$ be a family of left special idempotents in $A$. Then, the following are equivalent: 

\begin{enumerate}
\rmitem{a.} For every $M\in \cat{M}(A)$, the canonical morphism $\bigoplus_{i\in I} \Gamma_{e_{i}}(M) \to M$ is an isomorphism.
\rmitem{b.} The canonical functor $\pi_{\mathcal{E}}:\cat{M}(A) \to \prod_{i\in I} \cat{M}(A)_{e_{i}}$ induced by the functors $\Gamma_{e_{i}}: \cat{M}(A)\to \cat{M}(A)_{e_{i}}$ for $i\in I$ is an equivalence of categories.
\rmitem{c.} The following two conditions hold:
\begin{enumerate}
\rmitem{i} The family $\mathcal{E}$ is pairwise strongly orthogonal.
\rmitem{ii} The ideal $\sum_{i\in I} Ae_{i}A = A$.
\end{enumerate}
\end{enumerate}
\end{theorem}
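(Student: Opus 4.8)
The plan is to route everything through condition (a.), which is the concrete one, after setting up one auxiliary gadget. Write $\Sigma\colon\prod_{i\in I}\cat{M}(A)_{e_{i}}\to\cat{M}(A)$ for the functor $(N_{i})_{i}\mapsto\bigoplus_{i\in I}N_{i}$; this coproduct exists and is computed termwise in $\cat{M}(A)$ by Lemma \ref{l:401}. Combining the adjunctions $\mathrm{inc}\dashv\Gamma_{e_{i}}$ from Lemma \ref{lem:03} with the universal properties of products and coproducts, one checks that $\Sigma$ is left adjoint to $\pi_{\mathcal{E}}$; unwinding the adjunction shows that the counit $\varepsilon_{M}\colon\Sigma\pi_{\mathcal{E}}(M)=\bigoplus_{i\in I}\Gamma_{e_{i}}(M)\to M$ is precisely the canonical morphism of (a.), being on the $i$-th summand the inclusion $\Gamma_{e_{i}}(M)=Ae_{i}M\hookrightarrow M$. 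In particular (a.) is the assertion that $\varepsilon$ is an isomorphism, and since the counit of $\Sigma\dashv\pi_{\mathcal{E}}$ is an isomorphism exactly when $\pi_{\mathcal{E}}$ is fully faithful, (a.) already accounts for half of (b.). I will then prove (a.)$\Rightarrow$(c.)$\Rightarrow$(a.), deduce (a.)$\Rightarrow$(b.), and close with (b.)$\Rightarrow$(a.).

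For (a.)$\Rightarrow$(c.): apply (a.) to $M=Ae_{j}$, which lies in $\cat{M}(A)$ because $A$ is idempotented. Since $e_{j}=e_{j}e_{j}e_{j}\in Ae_{j}Ae_{j}$ one gets $\Gamma_{e_{j}}(Ae_{j})=Ae_{j}Ae_{j}=Ae_{j}$, so the isomorphism $\bigoplus_{i}Ae_{i}Ae_{j}\xrightarrow{\sim}Ae_{j}$ restricts to the identity on the $i=j$ summand; injectivity then forces $Ae_{i}Ae_{j}=0$ for $i\neq j$, whence $e_{i}Ae_{j}=0$ (as $e_{i}ae_{j}=e_{i}e_{i}ae_{j}\in Ae_{i}Ae_{j}$), and Corollary \ref{cor:892} promotes this to strong orthogonality: (c.)(i). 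Applying (a.) to $M=A$ (again non-degenerate, as $A$ is idempotented), surjectivity of $\bigoplus_{i}\Gamma_{e_{i}}(A)=\bigoplus_{i}Ae_{i}A\to A$ yields $\sum_{i}Ae_{i}A=A$: (c.)(ii).

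For (c.)$\Rightarrow$(a.): fix $M\in\cat{M}(A)$. Surjectivity of $\varepsilon_{M}$ is immediate from $M=AM=(\sum_{i}Ae_{i}A)M=\sum_{i}Ae_{i}M=\sum_{i}\Gamma_{e_{i}}(M)$. For injectivity, i.e.\ that this internal sum is direct, fix $j$ and put $P=\sum_{i\neq j}\Gamma_{e_{i}}(M)$; strong orthogonality gives $e_{j}Ae_{i}=0$ for $i\neq j$, so $e_{j}P=0$. Now $\Gamma_{e_{j}}(M)\cap P$ is a submodule of $\Gamma_{e_{j}}(M)\in\cat{M}(A)_{e_{j}}$, hence lies in $\cat{M}(A)_{e_{j}}$ (Remark \ref{rem_lem:10b}) and is therefore generated by its $e_{j}$-fixed vectors (Lemma \ref{lem:02}); being annihilated by $e_{j}$ it must vanish. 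Thus $\varepsilon_{M}$ is an isomorphism.

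Finally, granting (a.) (and hence (c.)), $\pi_{\mathcal{E}}$ is fully faithful by the first paragraph, and to see it is an equivalence it remains to check that the unit $\eta_{(N_{i})}\colon(N_{i})_{i}\to\bigl(\Gamma_{e_{j}}(\bigoplus_{i}N_{i})\bigr)_{j}$ is an isomorphism; but $\Gamma_{e_{j}}(\bigoplus_{i}N_{i})=\bigoplus_{i}Ae_{j}N_{i}$ has $i=j$ term $Ae_{j}N_{j}=N_{j}$ (Lemma \ref{lem:02}) and all other terms zero since $e_{j}N_{i}=e_{j}Ae_{i}N_{i}=0$ by (c.)(i), and $\eta$ becomes the identity under this identification. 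Conversely, if $\pi_{\mathcal{E}}$ is an equivalence then it is fully faithful, so the counit of $\Sigma\dashv\pi_{\mathcal{E}}$ is an isomorphism; that counit is the canonical map of (a.), so (a.) holds. The only step carrying genuine content is the directness argument in (c.)$\Rightarrow$(a.), where strong orthogonality is used through the fact that a nonzero module cannot be both generated by its $e_{j}$-fixed vectors and annihilated by $e_{j}$. The main nuisance rather than obstacle is checking cleanly that the counit of $\Sigma\dashv\pi_{\mathcal{E}}$ is literally the canonical morphism of (a.) — a routine but tedious unwinding on which the whole streamlined argument rests.
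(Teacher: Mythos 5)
Your proof is correct, and it takes a slightly different route from the paper's. The paper proves the cycle (a.)$\Rightarrow$(b.)$\Rightarrow$(c.)$\Rightarrow$(a.), and along the way first isolates, as a standalone claim, the fact that $\pi_{\mathcal{E}}$ is essentially surjective iff $\mathcal{E}$ is pairwise strongly orthogonal; it gets strong orthogonality from (a.) by observing that $\Gamma_{e_{i}}(M)\cap\Gamma_{e_{j}}(M)=0$ for all $M$ forces $\cat{M}(A)_{e_{i}}\cap\cat{M}(A)_{e_{j}}=0$, gets faithfulness from Lemma \ref{lem:894}, and extracts (c.)(ii) from (b.) by considering $L=A/\sum_{i}Ae_{i}A$. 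You instead organize the whole argument around the adjunction $\Sigma\dashv\pi_{\mathcal{E}}$, which makes the equivalence (a.)$\Leftrightarrow$(``$\pi_{\mathcal{E}}$ fully faithful'') formal, and you derive (c.) from (a.) directly by testing the canonical map on the concrete modules $Ae_{j}$ and $A$ rather than by passing through categorical properties of $\pi_{\mathcal{E}}$. The arguments for (c.)$\Rightarrow$(a.) — surjectivity from $\sum Ae_{i}A=A$, directness from the tension between being generated by $e_{j}$-fixed vectors and being killed by $e_{j}$ — are essentially the same in both. What your reorganization buys is a cleaner conceptual explanation of why (a.) and (b.) are close (counit/unit of a single adjunction), at the price of having to set up and unwind $\Sigma\dashv\pi_{\mathcal{E}}$; what the paper buys is that the essential-surjectivity characterization is stated once and reused.
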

\begin{proof}

We first show that $\pi_{\mathcal{E}}$ is essentially surjective if and only if $\mathcal{E}$ is pairwise strongly orthogonal.

First assume that $\mathcal{E}$ is pairwise strongly orthogonal, and let $(M_{i})_{i\in I}$ be an object of $\prod_{i\in I} \cat{M}(A)_{e_{i}}$. Consider $M:= \bigoplus_{i\in I} M_{i}$; $M\in \cat{M}(A)$. Since the distinct elements of $\mathcal{E}$ are strongly orthogonal to each other, $\Gamma_{e_{i}}(M) = M_{i}$. Thus $\pi_{\mathcal{E}}(M) = (M_{i})_{i\in I}$.

Conversely, suppose $\pi_{E}$ is essentially surjective. We wish to show that $\mathcal{E}$ is pairwise strongly orthogonal. Apropos, let $e_{i},e_{j}\in \mathcal{E}$, with $i\neq j$. Suppose $0\neq N\in \cat{M}(A)_{e_{i}}\bigcap \cat{M}(A)_{e_{j}}$. Consider the object $(N_{l})_{l\in I}$ with $N_{i} = N$ and $N_{k} = 0$ for all $k\neq i$. Suppose $(N_{l})_{l\in I}\cong \pi_{\mathcal{E}}(M)$ for some $M\in \cat{M}(A)$. Then, $\Gamma_{e_{i}}(M) = N$, while $\Gamma_{e_{j}}(M) = 0$. This is a contradiction. 

We now prove the theorem. If (a.) holds, then for any $M\in \cat{M}(A)$ and $e_{i}, e_{j}\in \mathcal{E}$ such that $i\neq j$, $\Gamma_{e_{i}}(M)\cap \Gamma_{e_{j}}(M) = 0$. Thus $\cat{M}(A)_{e_{i}}\cap \cat{M}(A)_{e_{j}} = 0$, and $e_{i}$ and $e_{j}$ are strongly orthogonal. Thus the functor $\pi_{\mathcal{E}}$ is essentially surjective; using Lemma \ref{lem:894}, we see that $\pi_{\mathcal{E}}$ is faithful. It is clear that (a.) forces $\pi_{\mathcal{E}}$ to be full. Thus (a.) implies (b.).

Now assume (b.) If $\pi_{\mathcal{E}}:\cat{M}(A) \to \prod_{i\in I} \cat{M}(A)_{e_{i}}$ is an equivalence of categories, it is essentially surjective. Thus  (i) holds. If $\sum_{i\in I} Ae_{i}A \neq A$, then $L: = A/\sum_{i\in I} Ae_{i}A$ is an object of $\cat{M}(A)$ such that $e_{i}L = 0$ for all $i\in I$. Then $\pi_{\mathcal{E}}(L) = 0$, which contradicts the fact that $\pi_{\mathcal{E}}$ is faithful. Thus (b.) implies (c.).

Assume (c.), and let $M\in \cat{M}(A)$. Since $M=AM$, $M = \sum_{i\in I} \Gamma_{e_{i}}(M)$. Since the elements of $\mathcal{E}$ are pairwise strongly orthogonal, this sum is direct. Thus we obtain (a.).
\end{proof}

\begin{definition} \label{def:895}
A family $\{e_{i}\}_{i\in I}$ of left special idempotents in $A$ which satisfies the equivalent conditions in Theorem \ref{prop:896} is said to be \emph{full}.
\end{definition}

\begin{corollary} \label{cor:897}
Let the  family $\mathcal{E} = \{e_{i}\}_{i\in I}$ be full. Then, every idempotent $e_{i}\in \mathcal{E}$ is left split. \qed
\end{corollary}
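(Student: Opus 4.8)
The plan is to deduce Corollary~\ref{cor:897} directly from the characterization of splitness in terms of the decomposition $\cat{M}(A) = \cat{M}(A)_{e} \bigoplus \cat{M}(A)_{e}^{\perp}$, which is essentially the content of Proposition~\ref{prop:30} together with Lemma~\ref{lem:07}. Recall that $e_i$ is left split precisely when $\Gamma_{e_i}$ (viewed as an endofunctor of $\cat{M}(A)$) preserves injective objects, and by the argument in Corollary~\ref{cor:12}, this would follow once we know that every $M \in \cat{M}(A)$ splits as $\Gamma_{e_i}(M) \oplus M/\Gamma_{e_i}(M)$ with the complementary summand lying in $\cat{M}(A)_{e_i}^{\perp}$. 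So the real task is to produce such a splitting using only the fullness hypothesis.

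First I would invoke condition (a.) of Theorem~\ref{prop:896}: for every $M \in \cat{M}(A)$, the canonical map $\bigoplus_{j \in I} \Gamma_{e_j}(M) \to M$ is an isomorphism. Fixing $i \in I$, this exhibits $M$ as $\Gamma_{e_i}(M) \oplus \bigl(\bigoplus_{j \neq i} \Gamma_{e_j}(M)\bigr)$. Next I would check that the complementary summand $N := \bigoplus_{j \neq i} \Gamma_{e_j}(M)$ satisfies $e_i N = 0$, i.e. $N \in \cat{M}(A)_{e_i}^{\perp}$: indeed each $\Gamma_{e_j}(M)$ lies in $\cat{M}(A)_{e_j}$, and since $\mathcal{E}$ is pairwise strongly orthogonal (condition (c.)(i) of Theorem~\ref{prop:896}), Lemma~\ref{lem:891} gives $\cat{M}(A)_{e_j} \cap \cat{M}(A)_{e_i} = 0$; more directly, strong orthogonality means $e_i A e_j = 0$, so $e_i \Gamma_{e_j}(M) = e_i A e_j M = 0$. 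Hence $e_i N = 0$. This identifies the decomposition $M = \Gamma_{e_i}(M) \oplus N$ with the decomposition of Proposition~\ref{prop:30}, and in particular shows $N \cong M/\Gamma_{e_i}(M)$.

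With this in hand, I would run the argument of Lemma~\ref{lem:07}: let $I$ be an injective object of $\cat{M}(A)_{e_i}$ and $E$ its injective hull in $\cat{M}(A)$. Writing $E = \Gamma_{e_i}(E) \oplus N_E$ with $e_i N_E = 0$ as above, the subobject $I \leq E$ must already lie inside $\Gamma_{e_i}(E)$ (since $I \in \cat{M}(A)_{e_i}$ and $\mathrm{Hom}_A(I, N_E) = 0$ by Lemma~\ref{lem:12}, using that $e_i$ is left special, which it is as a member of a family of left special idempotents). As $\Gamma_{e_i}(E)$ is a direct summand of the injective object $E$, it is injective in $\cat{M}(A)$; and since $I \leq \Gamma_{e_i}(E)$ is an essential extension that is also a summand inside an injective, $I$ is a direct summand of $\Gamma_{e_i}(E)$, hence injective in $\cat{M}(A)$. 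Therefore the inclusion $\cat{M}(A)_{e_i} \hookrightarrow \cat{M}(A)$ preserves injectives, which is exactly the assertion that $e_i$ is left split.

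The only mildly delicate point — and the step I would flag as the main obstacle — is making sure we are entitled to the input results: Lemma~\ref{lem:12} and the description of $\Gamma_{e_i}$ require $e_i$ to be left special, which holds by definition of a family of left special idempotents, but Proposition~\ref{prop:30} as stated requires $e_i$ to be \emph{both} left special \emph{and} left split, so we cannot cite it directly without circularity. The resolution is that we do not need Proposition~\ref{prop:30}: the decomposition $M = \Gamma_{e_i}(M) \oplus N$ with $e_i N = 0$ is produced here \emph{by hand} from the fullness of $\mathcal{E}$ (Theorem~\ref{prop:896}(a.)), not from splitness. Once that bare splitting is available, only Lemma~\ref{lem:12} and standard facts about injective hulls and summands of injectives are needed, and these depend only on left specialness. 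So the clean order is: (1) use Theorem~\ref{prop:896}(a.) to split $M$; (2) use strong orthogonality to see the complement is $e_i$-torsion; (3) feed this into the injective-hull argument of Lemma~\ref{lem:07} to conclude.
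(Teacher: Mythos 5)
Your proof is correct and follows the route the paper evidently intends (the corollary is stated with no written proof): use Theorem~\ref{prop:896}(a.) together with strong orthogonality to decompose every $M\in\cat{M}(A)$ as $\Gamma_{e_i}(M)\oplus N$ with $e_i N = 0$, so that $\Gamma_{e_i}$ is a direct-summand functor and in particular preserves injectives, and then Lemma~\ref{lem:07} yields that $e_i$ is left split. Your final injective-hull paragraph in effect re-proves the backward direction of Lemma~\ref{lem:07} instead of simply citing it, and your flagged worry about circularity via Proposition~\ref{prop:30} is correctly resolved by noting the decomposition is produced directly from fullness.
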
 

\begin{theorem} \label{prop:898}
Let the family $\mathcal{E} = \{e_{i}\}_{i\in I}$ be full. Then, the canonical functor $\tilde{\pi}_{\mathcal{E}}:\cat{D}(A) \to \prod_{i\in I} \cat{D}(A)_{e_{i}}$ induced by the functors $\Gamma_{e_{i}}: \cat{D}(A)\to \cat{D}(A)_{e_{i}}$ for $i\in I$ is an equivalance of categories.
\end{theorem}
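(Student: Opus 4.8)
The plan is to construct an explicit quasi-inverse to $\tilde{\pi}_{\mathcal{E}}$, rather than to verify full faithfulness and essential surjectivity separately; the argument runs parallel to that of Theorem~\ref{prop:51}. First I would record the ingredients that are now available: by Corollary~\ref{cor:897} each $e_{i}$ is left special and left split, so each $\Gamma_{e_{i}}$ is exact (Corollary~\ref{cor:41}), extends to $\cat{D}(A)$ by degreewise application, and therefore commutes with cohomology and with arbitrary direct sums (these exist in $\cat{M}(A)$ and are exact by Lemma~\ref{l:401}, hence exist in $\cat{D}(A)$ and are computed degreewise). By Theorem~\ref{prop:896}(c)(i) the family $\mathcal{E}$ is pairwise strongly orthogonal, and by Lemma~\ref{l:079} an object of $\cat{D}(A)$ lies in $\cat{D}(A)_{e_{i}}$ precisely when all of its cohomology modules lie in $\cat{M}(A)_{e_{i}} = \{M\mid M = Ae_{i}M\}$ (Lemma~\ref{lem:02}).

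Next I would define $\sigma\colon \prod_{i\in I}\cat{D}(A)_{e_{i}}\to \cat{D}(A)$ by $\sigma\big((M_{i})_{i\in I}\big) = \bigoplus_{i\in I} M_{i}$, sending a tuple of morphisms to the corresponding direct sum; this is plainly a triangulated functor landing in $\cat{D}(A)$. I would then establish two natural isomorphisms, $\sigma\circ\tilde{\pi}_{\mathcal{E}}\cong \operatorname{id}_{\cat{D}(A)}$ and $\tilde{\pi}_{\mathcal{E}}\circ\sigma\cong \operatorname{id}$. For the first, the inclusions $\Gamma_{e_{i}}(M) = Ae_{i}M\hookrightarrow M$ assemble into a morphism $\varepsilon_{M}\colon \bigoplus_{i\in I}\Gamma_{e_{i}}(M)\to M$ which is natural in $M$. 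Since $\Gamma_{e_{i}}$ and direct sums commute with cohomology, $\operatorname{H}^{k}(\varepsilon_{M})$ is the canonical map $\bigoplus_{i\in I}\Gamma_{e_{i}}(\operatorname{H}^{k}M)\to \operatorname{H}^{k}M$, which is an isomorphism by property~(a) of Theorem~\ref{prop:896}; hence $\varepsilon_{M}$ is a quasi-isomorphism.

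For the second isomorphism, fix $(M_{i})\in \prod_{i}\cat{D}(A)_{e_{i}}$ and $k\in I$, and compute $\Gamma_{e_{k}}\big(\bigoplus_{i} M_{i}\big)\cong \bigoplus_{i}\Gamma_{e_{k}}(M_{i})$. The $i=k$ summand is isomorphic to $M_{k}$, since the canonical map $\Gamma_{e_{k}}(M_{k})\to M_{k}$ is an isomorphism (Lemma~\ref{l:079}). For $i\neq k$, strong orthogonality gives $e_{k}Ae_{i} = 0$; combined with $\operatorname{H}^{j}(M_{i}) = Ae_{i}\operatorname{H}^{j}(M_{i})$ this forces $e_{k}\operatorname{H}^{j}(M_{i}) = 0$, hence $\operatorname{H}^{j}(\Gamma_{e_{k}}(M_{i})) = Ae_{k}\operatorname{H}^{j}(M_{i}) = 0$, so $\Gamma_{e_{k}}(M_{i})\cong 0$ in $\cat{D}(A)$. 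Thus $\Gamma_{e_{k}}\big(\bigoplus_{i}M_{i}\big)\cong M_{k}$, naturally in $(M_{i})$, which yields $\tilde{\pi}_{\mathcal{E}}\circ\sigma\cong \operatorname{id}$.

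Combining the two, $\sigma$ is a quasi-inverse to $\tilde{\pi}_{\mathcal{E}}$, which is therefore an equivalence of (triangulated) categories. I expect the only genuinely delicate point to be the bookkeeping around infinite direct sums: one must be sure that coproducts in $\cat{D}(A)$ exist, are computed degreewise, and are respected both by the degreewise-applied exact functor $\Gamma_{e_{i}}$ and by the formation of cohomology — all of which ultimately rest on the exactness of direct sums in $\cat{M}(A)$ recorded in Lemma~\ref{l:401}. Once that is in hand, everything else is a routine transfer of the module-category identity of Theorem~\ref{prop:896}(a) to complexes through the exactness of the functors $\Gamma_{e_{i}}$.
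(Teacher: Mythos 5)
Your proof is correct, and it takes a genuinely different route from the one sketched in the paper. The paper verifies essential surjectivity and full faithfulness separately: essential surjectivity is read off from Theorem~\ref{prop:896}, and full faithfulness is reduced via Proposition~\ref{prop:501} and Lemma~\ref{l:078} to a computation in the homotopy category with K-projective representatives, after which one appeals to the module-category decomposition. You instead build an explicit candidate quasi-inverse $\sigma = \bigoplus_{i}(-)$ and verify that both composites are naturally isomorphic to the identity by passing to cohomology, exploiting the exactness of each $\Gamma_{e_{i}}$ together with Lemma~\ref{l:079} (which, as you note, requires first invoking Corollary~\ref{cor:897}) and the strong-orthogonality relations of Theorem~\ref{prop:896}(c). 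Your approach is arguably more transparent: it never needs Lemma~\ref{l:078} or K-projective resolutions, only exactness of $\Gamma_{e_{i}}$ and of arbitrary direct sums in $\cat{M}(A)$; the paper's route, by contrast, is shorter once K-projective machinery is already on the table, and it makes clearer why the same pattern of argument proves both Theorem~\ref{prop:896} and Theorem~\ref{prop:898}. The one point you rightly flag as delicate — that coproducts in $\cat{D}(A)$ exist, are computed termwise, and are respected by $\Gamma_{e_{i}}$ and by $\operatorname{H}^{k}$ — is indeed the load-bearing assumption common to both arguments, and it does follow from Lemma~\ref{l:401} as you say.
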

\begin{proof}
We only sketch the proof. It is clear from Theorem \ref{prop:896} that $\tilde{\pi}_{\mathcal{E}}$ is essentially surjective. We therefore only need to show that $\tilde{\pi}_{\mathcal{E}}$ is full and faithful. By Corollary \ref{cor:897}, each $e\in \mathcal{E}$ is left split. If $M$ and $N$ are objects in $\cat{D}(A)$, by Proposition \ref{prop:501} we can assume that both $M$ and $N$ are K-projective. Then, by Lemma \ref{l:078}, $\Gamma_{e}(M)$ and $\Gamma_{e}(N)$ are K-projective for every $e\in \mathcal{E}$; in particular, for every $e\in \mathcal{E}$ $\Hom_{\cat{D}(A)}(\Gamma_{e}(M), \Gamma_{e}(N)) = \Hom_{\cat{K}(A)}(\Gamma_{e}(M), \Gamma_{e}(N))$. The result now follows from Theorem \ref{prop:896}.
\end{proof}

\subsection{Miscellany} 
It would be interesting and useful to characterize left special and left split idempotents for a given idempotented algebra. This seems to be a difficult question in general: for instance, consider the case of $\mathcal H(G)$, as described in \S \ref{ss:red_grp}.

For simple idempotented algebras, however, the situation presents no difficulty.  
\begin{proposition}\label{prop_simple}
Let $A$ be a simple algebra. Then, every $e \in \Ai$ is left special and left split.
\end{proposition}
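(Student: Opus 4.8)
The plan is to analyze what simplicity of $A$ forces on the two-sided ideals generated by idempotents, and then invoke the theory already developed. Recall that $A$ is idempotented, so $A$ is not the zero ring, and every element is contained in some $AeA$ with $e$ idempotent; hence the two-sided ideal $AeA$ is nonzero for any $e \in \Ai$ (assuming $e \neq 0$; the case $e = 0$ is trivial since then $\cat{M}(A)_{e} = 0$). Since $A$ is simple, this forces $AeA = A$ for every nonzero $e \in \Ai$.

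First I would show that every nonzero $e \in \Ai$ is left special, i.e. that $\{M \in \cat{M}(A) \mid M = AeM\}$ is closed under subquotients. The key point is that $AeA = A$ implies $AeM = AeAM = AM = M$ for \emph{every} $M \in \cat{M}(A)$ (using that non-degenerate modules satisfy $AM = M$, which follows from Lemma \ref{lem:pre_3}). So in fact $\{M \in \cat{M}(A) \mid M = AeM\} = \cat{M}(A)$, which is trivially closed under subquotients. Equivalently, $\cat{M}(A)_{e} = \cat{M}(A)$ and $\Gamma_{e} = \operatorname{id}$.

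Next I would observe that left splitness is then immediate: by Definition \ref{defn:split}, $e$ is left split if the inclusion $\operatorname{inc}: \cat{M}(A)_{e} \to \cat{M}(A)$ preserves injective objects. But we have just seen $\cat{M}(A)_{e} = \cat{M}(A)$ and the inclusion is the identity functor, which certainly preserves injectives. (Alternatively, one can cite Lemma \ref{lem:07}: $\Gamma_{e} = \operatorname{id}$ trivially sends injectives to injectives.) For $e = 0$, one checks directly that $\cat{M}(A)_{0} = 0$, which is closed under subquotients, so $0$ is left special, and the inclusion $0 \to \cat{M}(A)$ preserves injectives (the zero object is injective), so $0$ is left split.

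I do not anticipate a serious obstacle here — the content is entirely in the observation that simplicity collapses $AeA$ to $A$, trivializing $\cat{M}(A)_{e}$. The only point requiring a little care is the degenerate case $e = 0$ and making sure the definition of "simple algebra" is being used in the sense that $A$ has no two-sided ideals other than $0$ and $A$ (and $A \neq 0$, which is automatic for a nonzero idempotented algebra). If the paper's convention for "simple" additionally requires $A^{2} = A$ or unitality, the argument only gets easier; either way the heart is the two-line computation $M = AM = AeAM = AeM$.
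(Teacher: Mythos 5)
Your proposal is correct and follows the same route as the paper: both proofs observe that simplicity forces $AeA = A$, hence $AeM = M$ for every $M \in \cat{M}(A)$, so $\cat{M}(A)_{e} = \cat{M}(A)$ and both properties become trivial. The only difference is that you explicitly treat the degenerate case $e = 0$, where $AeA = 0 \neq A$ and one instead notes $\cat{M}(A)_{0} = 0$ directly — a small point of care that the paper's proof glosses over.
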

\begin{proof}
Suppose $e\in \Ai$. Since $A$ is simple, $A = AeA$. This implies that, for any $M\in \cat{M}(A)$, $AeM = AM = M$. Thus $\cat{M}(A)_{e} = \cat{M}(A)$. In particular, since $\cat{M}(A)_{e}$ is closed under subquotients, $e$ is left special; since $\cat{M}(A)_e = \cat{M}(A)$, $e$ is left split. 
\end{proof}

\begin{example}
Let $F$ be an arbitrary field and let $V$ be a vector space of countably infinite dimension over $F$. Suppose $\{x_{i}\}_{i\in \mathbb{N}}$ be a fixed ordered basis of $V$. Define $e_{n} \in \mathrm{End}_{F}(V)$ to be the idempotent linear transformation with $e_{n}(x_{i}) =x_{i}$ if $i\leq n$ and $e_{n}(x_{i})=0$ if $i>n$. Set $A_{n} = e_{n} \mathrm{End}_{F}(V) e_{n}$ and $A= \varinjlim_{n\in \mathbb{N}} A_{n}$. The algebras $A_{n}$ ($\cong M_{n}(F)$) and $A$ are simple idempotented algebras. Thus, by Proposition \ref{prop_simple}, every idempotent in these algebras is left special and left split.
\end{example}

The following result provides us with examples of idempotents that are both left special and left split, in another specific situation.
\begin{proposition}\label{prop_cen}
Let $e\in \Ai$ be a central idempotent. Then, $e$ is left special and left split.
\end{proposition}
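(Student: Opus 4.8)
The plan is to show that a central idempotent $e\in\Ai$ satisfies both defining properties directly, using the fact that $AeM = eM$ when $e$ is central. First I would observe that if $e$ is central, then for any $M\in\cat{M}(A)$ we have $AeM = eAM = eM$, and moreover $eM$ is automatically an $A$-submodule of $M$ (since $a(em) = e(am) \in eM$). Consequently the subcategory $\{M\in\cat{M}(A)\mid M = AeM\}$ is precisely $\{M\in\cat{M}(A)\mid eM = M\}$, and for an arbitrary $M$ the submodule $\Gamma_e(M) = eM$ has a natural complement: since $e^2 = e$, every $m\in M$ with $em = m$ satisfies $em = m$ obviously, while setting $\mathrm{Q}_e(M) := (1-e)M$ in the naive sense — more precisely, $\mathrm{Q}_e(M) := \{m\in M \mid em = 0\}$ — one checks that $M = eM \oplus \mathrm{Q}_e(M)$ as $A$-modules, with the projection $m\mapsto em$. (Here $1-e$ need not lie in $A$, but the decomposition $m = em + (m - em)$ still makes sense pointwise since $e(m-em) = em - em = 0$, and $M$ being non-degenerate guarantees $eM$ and $\mathrm{Q}_e(M)$ are each non-degenerate via an idempotent absorbing $e$.)

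Next I would use this functorial direct-sum decomposition to deduce left specialness: if $M = eM$ and $N\leq M$ is a submodule, then every $n\in N$ satisfies $en = n$ (because $n = em$ for some $m$, so $en = e^2 m = em = n$), hence $N = eN = AeN$; similarly any quotient of $M$ is generated by its $e$-fixed vectors. Thus $\{M\mid M = AeM\}$ is closed under subquotients, so $e$ is left special. For left splitness, I would invoke Lemma~\ref{lem:07}: it suffices to show that $\Gamma_e$ carries injective objects of $\cat{M}(A)$ to injective objects of $\cat{M}(A)$. But $\Gamma_e(M) = eM$ is a direct summand (functorially) of $M$ via the decomposition above, and a direct summand of an injective object is injective. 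Therefore $e$ is left split.

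The main obstacle — and it is a minor technical point rather than a genuine difficulty — is verifying carefully that the pointwise decomposition $M = eM \oplus \mathrm{Q}_e(M)$ is a decomposition \emph{inside} $\cat{M}(A)$, i.e. that both summands are genuine $A$-submodules lying in $\cat{M}(A)$, and that it is natural in $M$. Centrality of $e$ is exactly what makes $eM$ an $A$-submodule and makes $m\mapsto em$ an $A$-linear idempotent endomorphism of $M$; non-degeneracy of $M$ (together with the idempotented hypothesis on $A$) is what keeps the summands non-degenerate. Once this bookkeeping is in place, both conclusions follow formally. Alternatively, one could note that a central idempotent $e$ gives a full family $\{e\}$ only when $AeA = A$, which need not hold, so I would not route the argument through Theorem~\ref{prop:896}; the direct argument above is cleaner and avoids that hypothesis.
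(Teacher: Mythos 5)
Your argument for left specialness is the same as the paper's: centrality gives $AeM=eM$, and $\{M : eM=M\}$ is visibly closed under subquotients. For left splitness you take a genuinely different route. The paper notes that $\Gamma_e(M)=eM$ is exact and, via Proposition~\ref{prop:adm_1} and Lemma~\ref{lem:04}, that $\Gamma_e:\cat{M}(A)\to\cat{M}(A)$ is its own right adjoint; since right adjoints of exact functors preserve injectives, Lemma~\ref{lem:07} finishes the job. You instead exhibit, for a central idempotent, the explicit natural decomposition $M = eM \oplus \{m\in M \mid em=0\}$ inside $\cat{M}(A)$ (correct: $m = em + (m-em)$, centrality makes both pieces $A$-submodules, non-degeneracy passes to submodules, and the splitting is functorial), so $\Gamma_e(I)=eI$ is a direct summand of $I$ and hence injective whenever $I$ is, again invoking Lemma~\ref{lem:07}. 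Your approach is more elementary and concrete; it effectively establishes the conclusion of Proposition~\ref{prop:30} for central $e$ by hand rather than deriving it from the general adjunction formalism. The paper's approach is more uniform with the surrounding machinery, but either is a complete proof. Your closing remark about why Theorem~\ref{prop:896} should not be used here (since $AeA=A$ need not hold) is also apt.
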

\begin{proof}
Let $M\in \cat{M}(A)$. Since $e$ is central, $AeM = eM$. Therefore the category $\{M\in \cat{M}(A) \ \vert \ M = AeM\}$ is precisely the full subcategory of all $M\in \cat{M}(A)$ such that $M=eM$, and is clearly closed under subquotients. Thus $e$ is left special. Note that $\Gamma_{e}(M) = eM$, for any $M\in \cat{M}(A)$; in particular, $\Gamma_{e}$ is exact. By Proposition \ref{prop:adm_1} and  Lemma \ref{lem:04}, as a functor from $\cat{M}(A) \to \cat{M}(A)$, $\Gamma_{e}$ is its own right adjoint. The right adjoint of any exact functor preserves injective objects: by Lemma \ref{lem:07}, $\Gamma_{e}$ is left split. 
\end{proof}

\section{Path algebras} \label{ss:patha}
We now study left special and left split idempotents in path algebras. We begin by recalling some preliminaries.

Recall that a \emph{quiver} is a quadruple $Q: = (V,E,s,t)$, with sets $V$ and $E$ and functions $s,t: E\to V$. The elements of $V$ and $E$ are, respectively, called the \emph{vertices} and \emph{edges} of $Q$, and if $a$ is an edge, we respectively refer to $s(a)$ and $t(a)$ as the \emph{source} and \emph{target} of $a$. Let $E_{V}$ be the set of symbols $\{e_{v}\ \vert \  v\in V\}$; there is a canonical bijection between $E_{V}$ and $V$. A path in $Q$ is either a finite sequence of edges $a_{1},\ldots,a_{n}$ such that $t(a_{i}) = s(a_{i+1})$ (we will denote such a path by the concatenation $a_{n}\ldots a_{1}$), or an element of $E_{V}$. Let $P$ denote the set of all paths in $Q$. The elements of $P \setminus E_{V}$ are called \emph{non-trivial} paths, while those in $E_{V}$ are called $\emph{trivial}$. The path $e_{v}$ should be thought of as the path of length $0$ at the vertex $v$. 

There are obvious extensions of $s$ and $t$ to functions $s,t: P\to V$.

\begin{definition}
Let $Q$ be a quiver. The path algebra of $Q$, denoted by $\mathbb{K}Q$, is the free module on the set $P$, with multiplication defined as follows and then extended bilinearly: 
\begin{enumerate}
\item If $p$ and $q$ are paths with $t(q) \neq s(p)$, then $pq:= 0$.
\item If $p$ and $q$ are non-trivial paths with $t(q) = s(p)$, then $pq$ is defined as the concatenation of $p$ and $q$.
\item If $p$ is a path, then $e_{t(p)} p = p e_{s(p)} := p$.
\end{enumerate}
\end{definition}

For example, if we take $Q$ to be the quiver with a single vertex and $n$ distinct paths that start and end at that vertex, then $\mathbb{K}Q$ is the free associative $\mathbb{K}$-algebra in $n$ non-commuting variables.

\begin{remark}
For every finite subset $S\subseteq V$ the element $e_{S}:= \sum_{v\in S} e_{v}$ is an idempotent in the algebra $\mathbb{K}Q$. It follows that this algebra is always idempotented: it is unital precisely when the vertex set $V$ is finite, in which case $e_{V}$ is the unit.
\end{remark}

\begin{definition}
Let $Q$ be a quiver, with vertex set $V$. A finite subset $S\subseteq V$ is said to be \emph{left closed} if whenever $v\in S$ and $a$ is an edge with $s(a) = v$, then $t(a)\in S$. Similarly, a finite subset $S\subseteq V$ is said to be \emph{right closed} if whenever $v\in S$ and $a$ is an edge with $t(a) = v$, then $s(a) = v$.
\end{definition}

\begin{remark}\label{rem_lr_closed}
Note that a finite set $S\subseteq V$ is left and right closed precisely when it is a union of connected components of the underlying directed graph $(V,E)$ of $Q$.
\end{remark}

\begin{proposition} \label{prop_left_clo1}
Let $Q$ be a quiver, with vertex set $V$. Let $e\in \mathbb{K}Q$. Then, $e$ is left special if and only if there exists 
\begin{enumerate}
\item A left closed subset $S\subseteq V$, 
\item An $S$-indexed family $\{\lambda_{v}\}_{v\in S}$ of non-zero elements of $\mathbb{K}_{\mathrm{idm}}$, such that $\mathbb{K}\lambda_{v}\subseteq \mathbb{K}\lambda_{v'}$ whenever there is a path $p$ with $s(p)=v$ and $t(p)=v'$,
\item A collection of distinct paths $p_{1},\ldots, p_{n}$, with $t(p_{i})\in S$ for all $i\in \{1,\ldots,n\}$, 
\item A collection of non-zero elements $\kappa_{1},\ldots,\kappa_{n}$ of $\mathbb{K}$ with $\lambda_{t(p_{i})} \kappa_{i}= \kappa_{i}$ for all $i\in \{1,\ldots,n\}$, and $\lambda_{s(p_{i})}\kappa_{i} = 0$ for all $i\in \{1,\ldots,n\}$ with $s(p_{i})\in S$,
\end{enumerate}
 such that $e = \sum_{v\in S} \lambda_{v}e_{v} + \sum_{i=1}^{n} \kappa_{i}p_{i}$.
\begin{proof}
Suppose $e = \sum_{v\in S} \lambda_{v}e_{v} + \sum_{i=1}^{n} \kappa_{i}p_{i}$, where the data in this expression is as in the hypothesis of the proposition. It is an easy check that $e = e^{2}$. Let $M\in \cat{M}(\mathbb{K}Q)$, and suppose $M = \mathbb{K}QeM$; since $S$ is left closed, $e_{S}M = M$. It follows that for $v\in S$, \[e_{v}M = (\sum_{\mathclap{\substack{p\in P \\ t(p) = v}}}\mathbb{K}p)(\sum_{v\in S} \lambda_{v}e_{v} + \sum_{i=1}^{n} \kappa_{i}p_{i})M = \sum_{\mathclap{\substack{p\in P \\ s(p)\in S \\ t(p)=v}}} \mathbb{K}\lambda_{s(p)}pM + \sum_{{\substack{i \in \{1,\ldots, n\} \\ s(p_{i})\in S \\ p\in P \\ t(p)=v \\ s(p) = t(p_{i})}}} \mathbb{K}\kappa_{i}pp_{i}M.\] The hypothesis implies that each $\lambda_{s(p)}\in \mathbb{K}\lambda_{v}$ and that $\lambda_{v}\kappa_{i} = \kappa_{i}$ for all $p$ and $i$ occuring in the above sum. Thus $\lambda_{v}e_{v}M = e_{v}M$ for all $v\in S$. Together with $e_{S}M=M$, this implies that $e'M=M$, where $e' = \sum_{v\in S} \lambda_{v}e_{v}$. However, the hypothesis implies that $e'e=e$, while $ee' = e'$. This in turn implies that $e'M = M$ if and only if $eM=M$.  Thus in summary, $M = \mathbb{K}QeM$ if and only if $eM = M$. It is obvious that this condition is inherited by submodules. Thus $e$ is left special.

Now we show the converse. Let $e\in \mathbb{K}Q$, and suppose $e$ is left special. Let $S\subseteq V$ be the collection of all $v\in V$ such that $e_{v}$ appears as a non-zero term when $e$ is written in terms of the canonical generating set of $\mathbb{K}Q$; thus, there exist $\{\lambda_{v}\}_{v\in S}$,  $p_{1},\ldots,p_{n}$, and $\kappa_{1},\ldots,\kappa_{n}$ such that $e = \sum_{v\in S} \lambda_{v}e_{v} + \sum_{i=1}^{n} \kappa_{i}p_{i}$, where the $\lambda_{v}$ and $\kappa_{i}$ are elements of $\mathbb{K}$, and the $p_{i}$ are distinct non-trivial paths in $Q$. A straightforward multiplication shows that each $\lambda_{v}$ is idempotent. 

Suppose $S$ is not left closed. Then, there is an edge of $Q$, say $a'$, such that $s(a')$ belongs to $S$ but $t(a')$ does not. Define $M = \mathbb{K}\lambda_{s(a')}$, and consider $M\times M$. We give $M\times M$ the structure of a non-degenerate $\mathbb{K}Q$-module as follows. Let $e_{s(a')}$ act via the map $(x,y)\mapsto (x,0)$, $e_{t(a')}$ act via the map $(x,y)\mapsto (0,y)$, and $a'$ act via the map $(x,y)\mapsto (0,x)$. Define the action of all other edges and trivial paths to be $0$. It can be checked that this extends naturally to give $M\times M$ the structure of a $\mathbb{K}Q$ module: call this module $N$. Since $e_{t(a')} + e_{s(a')}$ fixes $N$, $N$ is non-degenerate. 
Now $e_{s(a')}eN$ is the $\mathbb{K}$-submodule $M\times 0$; it follows that $\mathbb{K}QeN = N$. However, it is easily checked that $L:=0\times M$ is $\mathbb{K}Q$-submodule of $N$, and that $\mathbb{K}QeL = 0$. This is a contradiction. Thus $S$ is left closed.

A similar argument gives the condition on idempotents. Since $S$ is left closed, it is enough to show that for any edge $a$ with $t(a), s(a)\in S$, $\mathbb{K}\lambda_{s(a)}\subseteq \mathbb{K}\lambda_{t(a)}$; we can assume that $s(a)\neq t(a)$. Define $M' = \mathbb{K}\lambda_{s(a)}$, and construct a $\mathbb{K}Q$-module $N'$ as we did above for $M$ and $N$, but with the edge $a$ playing the role that $a'$ played in the previous construction. As before, $\mathbb{K}QeN'=N'$, and $L':=0\times M'$ is $\mathbb{K}Q$-submodule of $N'$. Since $e$ is assumed left special, $L' = \mathbb{K}QeL'$; by construction $\mathbb{K}QeL' = \lambda_{t(a)}e_{t(a)}L'$. In particular, $L'$ is fixed by $\lambda_{t(a)}$. Since $M' = \mathbb{K}\lambda_{s(a)}$, this implies that $\lambda_{s(a)}$ is fixed by $\lambda_{t(a)}$, or equivalently that $\mathbb{K}\lambda_{s(a)}\subseteq \mathbb{K}\lambda_{t(a)}$. 

Let $P''$ denote those paths in $P': =\{p_{i}\mid i=1,\dots, n\}$ which cannot be written as the concatenation of shorter non-trivial paths in $P'$; note that every path in $P'\setminus P''$ is a concatenation of paths in $P'$. Then, $e = \sum_{v\in S} \lambda_{v}e_{v} + \sum_{p_{i}\in P''} \kappa_{i}p_{i} + \sum_{p_{i}\in P'\setminus P''} \kappa_{i}p_{i}.$ Let $p_{i}\in P''$. If  $s(p_{i}) \in S$, then since $S$ is left closed $t(p_{i})\in S$. Since $e$ is idempotent, by squaring and comparing both sides of the preceding equation we see that $\kappa_{i}p_{i} = \lambda_{t(p_{i})}\kappa_{i}e_{t(p_{i})}p_{i} + \lambda_{s(p_{i})}\kappa_{i}p_{i}e_{s(p_{i})}$. Therefore $\kappa_{i} = (\lambda_{s(p_{i})} + \lambda_{t(p_{i})})\kappa_{i}$. However, by what we have shown earlier in this proof, in this situation $\lambda_{s(p_{i})} \in \mathbb{K}\lambda_{t(p_{i})}$: thus, $\lambda_{t(p_{i})}\kappa_{i} = \kappa_{i}$, and $\lambda_{s(p_{i})} \kappa_{i} = 0$.
 If $s(p_{i})\notin S$, a similar, but easier, comparison shows that $\lambda_{t(p_{i})} \kappa_{i}= \kappa_{i}$. Finally, suppose that $p_{i} \in P'\setminus P''.$ If $s(p_{i})\in S$, then we can write $\kappa_{i}p_{i} = \lambda_{t(p_{i})}\kappa_{i}e_{t(p_{i})}p_{i} + \lambda_{s(p_{i})}\kappa_{i}p_{i}e_{s(p_{i})} + \sum_{j,k} \kappa_{j}\kappa_{k} p_{j}p_{k}$, where the $p_{j}$ and $p_{k}$ are shorter concatenable non-trivial paths such that $p_{j}p_{k} = p_{i}$. By induction, $\lambda_{t(p_{k})}\kappa_{k} = \kappa_{k}$, while $\lambda_{s(p_{j})}\kappa_{j} = 0$. Thus $\kappa_{j}\kappa_{k}=0$. It follows that $\kappa_{i}p_{i} = \lambda_{t(p_{i})}\kappa_{i}e_{t(p_{i})}p_{i} + \lambda_{s(p_{i})}\kappa_{i}p_{i}e_{s(p_{i})}$, and we can now argue as before. The case where $s(p_{i})\notin S$ is similar. This completes the proof.
\end{proof}
\end{proposition}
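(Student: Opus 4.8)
The plan is to prove the two implications of the equivalence separately. Throughout I write $e' := \sum_{v\in S}\lambda_v e_v$ for the ``diagonal part'' of the displayed expression for $e$.

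\emph{Sufficiency.} Suppose $e = e' + \sum_i \kappa_i p_i$ with data as in the statement. I would first record two elementary facts obtained by expanding products in the canonical basis: (a)~$e'e = e$ and $ee' = e'$, using $e_{t(p_i)}p_i = p_i$, $t(p_i)\in S$, the relation $\lambda_{t(p_i)}\kappa_i = \kappa_i$, and $\lambda_{s(p_i)}\kappa_i = 0$ whenever $s(p_i)\in S$; and (b)~$\kappa_i\kappa_j = 0$ whenever the concatenation $p_i p_j$ is a nonzero path, since then $s(p_i) = t(p_j)\in S$, so that $\lambda_{s(p_i)}\kappa_i = 0$ while $\lambda_{t(p_j)}\kappa_j = \kappa_j$ and $\lambda_{s(p_i)} = \lambda_{t(p_j)}$. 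Squaring $e$ and using (b) then gives $e^2 = e$. The crux of this direction is the claim that, for $M\in\cat{M}(\mathbb{K}Q)$, the four conditions $M = \mathbb{K}Q e M$, $e_S M = M$, $e' M = M$, and ``$e$ acts as the identity on $M$'' are all equivalent. Here $M = \mathbb{K}QeM \Rightarrow e_S M = M$ uses that $S$ is left closed, so every path starting in $S$ ends in $S$ and hence $\mathbb{K}Q e_S = e_S\,\mathbb{K}Q e_S$; since $e = e_S e$, this makes $e_S$ fix $\mathbb{K}Q e\subseteq \mathbb{K}Q e_S$, and therefore fix $M$. Given $e_S M = M$, one decomposes $M = \bigoplus_{v\in S} e_v M$ and inspects generators of $e_v M$: these have the form $\lambda_{s(q)}qm$ or $\kappa_i (q p_i)m$ where $q$ is a path into $v$ from $s(q)$, respectively from $t(p_i)$; the divisibility hypothesis $\mathbb{K}\lambda_{s(q)}\subseteq\mathbb{K}\lambda_v$, respectively $\mathbb{K}\lambda_{t(p_i)}\subseteq\mathbb{K}\lambda_v$, together with $\lambda_{t(p_i)}\kappa_i = \kappa_i$, then shows that $\lambda_v$ acts as the identity on $e_v M$, so $e' M = M$; as $e'$ is idempotent this says $e'$ acts as the identity, whereupon $e'e = e$ forces the same of $e$, giving $M = eM\subseteq\mathbb{K}QeM$. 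Since ``$e$ acts as the identity'' obviously passes to subquotients, $\{M\in\cat{M}(\mathbb{K}Q) : M = \mathbb{K}QeM\}$ is closed under subquotients and $e$ is left special (Definition~\ref{dfn:02}).

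\emph{Necessity.} Conversely, suppose $e$ is left special and write $e = \sum_{v\in S}\lambda_v e_v + \sum_{i=1}^n \kappa_i p_i$ in the canonical basis, with $S$ the set of vertices whose coefficient $\lambda_v$ is nonzero, the $p_i$ the distinct nontrivial paths occurring, and $\kappa_i\ne 0$ their coefficients. Comparing the coefficient of $e_v$ in $e^2 = e$ gives $\lambda_v^2 = \lambda_v$, so each $\lambda_v$ is idempotent. To see $S$ is left closed, suppose an edge $a$ has $s(a)\in S$ but $t(a)\notin S$, and build the $\mathbb{K}Q$-module $N$ on underlying space $\mathbb{K}\lambda_{s(a)}\oplus\mathbb{K}\lambda_{s(a)}$ in which $e_{s(a)}$ and $e_{t(a)}$ act as the two coordinate projections, $a$ acts by $(x,y)\mapsto(0,x)$, and every other generator acts as $0$; one checks this respects the quiver relations and that $e_{s(a)}+e_{t(a)}$ acts as the identity, so $N$ is non-degenerate. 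A short computation gives $e_{s(a)}eN = \mathbb{K}\lambda_{s(a)}\oplus 0$, hence $\mathbb{K}QeN = N$, whereas $L := 0\oplus\mathbb{K}\lambda_{s(a)}$ is a submodule with $eL = 0$ — contradicting left-specialness of $e$. An almost identical module built from an edge $a$ with both endpoints in $S$ has the submodule $L' := 0\oplus\mathbb{K}\lambda_{s(a)}$ satisfying $\mathbb{K}QeL' = 0\oplus\lambda_{t(a)}\mathbb{K}\lambda_{s(a)}$; left-specialness forces this to equal $L'$, i.e.\ $\lambda_{s(a)}\in\mathbb{K}\lambda_{t(a)}$, and composing the edges of a path — all of whose vertices lie in $S$, since $S$ is left closed — yields the full divisibility relation of condition (2).

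It remains to verify conditions (3) and (4) for the paths $p_i$, which I would do by induction on the length of $p_i$, comparing the coefficient of $p_i$ on the two sides of $e^2 = e$. In $e^2$ that coefficient is a sum of $\lambda_{t(p_i)}\kappa_i$ (present exactly when $t(p_i)\in S$), of $\lambda_{s(p_i)}\kappa_i$ (present exactly when $s(p_i)\in S$), and of terms $\kappa_a\kappa_b$ over factorisations $p_i = p_a p_b$ with $p_a, p_b$ among the $p_j$. By the inductive hypothesis, the shared vertex $s(p_a) = t(p_b)$ lies in $S$ (by (3) for $p_b$), so the already-proven half of (4) gives $\lambda_{s(p_a)}\kappa_a = 0$ and $\lambda_{t(p_b)}\kappa_b = \kappa_b$, whence $\kappa_a\kappa_b = 0$; thus every cross term vanishes (for the paths admitting no such factorisation this is automatic). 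The coefficient identity then reduces to $\bigl(\varepsilon_t\lambda_{t(p_i)} + \varepsilon_s\lambda_{s(p_i)}\bigr)\kappa_i = \kappa_i$ with $\varepsilon_t,\varepsilon_s\in\{0,1\}$ recording membership of $t(p_i),s(p_i)$ in $S$; using $\kappa_i\ne 0$, idempotency of the $\lambda$'s, left-closedness of $S$ (which forces $t(p_i)\in S$ in general, and forces $t(p_i)\in S$ whenever $s(p_i)\in S$), and the divisibility $\mathbb{K}\lambda_{s(p_i)}\subseteq\mathbb{K}\lambda_{t(p_i)}$ already established, one extracts precisely $\lambda_{t(p_i)}\kappa_i = \kappa_i$, and $\lambda_{s(p_i)}\kappa_i = 0$ when $s(p_i)\in S$. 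This produces all the required data.

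I expect this last induction to be the main obstacle: it must be organised so that at every factorisation $p_i = p_a p_b$ the internal vertex and the two endpoints are already known to lie in $S$ — which is exactly what lets the inductive hypothesis apply and the cross terms vanish — and the indecomposable base paths (where no cross terms occur) must be folded into the same scheme. By comparison, the sufficiency direction is routine once the equivalence between $M = \mathbb{K}QeM$ and ``$e$ acts as the identity on $M$'' is secured, and both the left-closedness and the divisibility steps of the necessity direction rest on the same two-dimensional test-module construction.
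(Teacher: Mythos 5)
Your proof takes essentially the same route as the paper's: for sufficiency, the same verification that $e$ is idempotent and the reduction of $M = \mathbb{K}QeM$ to $eM = M$ via $e' = \sum_{v\in S}\lambda_v e_v$; for necessity, the same two-dimensional test modules to extract left-closedness and the divisibility condition, followed by an induction comparing coefficients in $e^2 = e$ (your explicit induction on path length is a reorganization of the paper's split of the $p_i$ into those indecomposable in $P'$ and those decomposable, with the same use of the inductive hypothesis to kill the cross terms $\kappa_a\kappa_b$).

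There is one genuine slip in the sufficiency step. The inclusion $\mathbb{K}Qe \subseteq \mathbb{K}Qe_S$ you invoke is false in general: since the paths $p_i$ are only required to have $t(p_i)\in S$, not $s(p_i)\in S$, the element $e$ itself need not lie in the left ideal $\mathbb{K}Qe_S$ (which is spanned by paths with \emph{source} in $S$). What you actually need, and what is true, is the inclusion $\mathbb{K}Qe \subseteq e_S\mathbb{K}Q$: every trivial path $e_v$ with nonzero coefficient in $e$ has $v\in S$, and every $p_i$ has $t(p_i)\in S$, so any path $q$ with $qe\neq 0$ must satisfy $s(q)\in S$, hence $t(q)\in S$ by left-closedness, hence $e_Sq = q$. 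Therefore $e_S$ acts as the identity on $\mathbb{K}Qe$ and consequently on $M = \mathbb{K}QeM$. A secondary imprecision is your parenthetical ``left-closedness of $S$ (which forces $t(p_i)\in S$ in general)'': left-closedness alone only gives $t(p_i)\in S$ when $s(p_i)\in S$; when $s(p_i)\notin S$ you should instead appeal to $\kappa_i\neq 0$ in the coefficient identity $(\varepsilon_t\lambda_{t(p_i)} + \varepsilon_s\lambda_{s(p_i)})\kappa_i = \kappa_i$. Both points are readily repaired and do not affect the soundness of the overall argument.
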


\begin{corollary} \label{cor112211}
Let $Q$ be a quiver, with vertex set $V$. Let $S$ be a finite subset of $V$. Then, the idempotent $e_{S}$ is left special if and only if $S$ is left closed. \qed
\end{corollary}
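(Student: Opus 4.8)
The plan is to obtain both implications as immediate specializations of Proposition~\ref{prop_left_clo1}, so the argument is short.

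For the ``if'' direction, assume $S$ is left closed. I would apply Proposition~\ref{prop_left_clo1} with the left closed subset taken to be $S$ itself, with $\lambda_{v} = 1_{\mathbb{K}}$ for every $v\in S$ (a non-zero element of $\mathbb{K}_{\mathrm{idm}}$; the degenerate case $\mathbb{K}=0$ is trivial), and with the list of non-trivial paths empty ($n=0$). Then condition~(2) of the proposition holds since $\mathbb{K}\lambda_{v}=\mathbb{K}$ for all $v$, while conditions~(3) and~(4) are vacuous, and $\sum_{v\in S}\lambda_{v}e_{v}+\sum_{i=1}^{n}\kappa_{i}p_{i}=\sum_{v\in S}e_{v}=e_{S}$. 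Hence Proposition~\ref{prop_left_clo1} gives that $e_{S}$ is left special.

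For the ``only if'' direction, assume $e_{S}$ is left special. Proposition~\ref{prop_left_clo1} then supplies a left closed subset $S'\subseteq V$, non-zero elements $\{\lambda_{v}\}_{v\in S'}$ of $\mathbb{K}_{\mathrm{idm}}$, distinct non-trivial paths $p_{1},\ldots,p_{n}$, and non-zero scalars $\kappa_{1},\ldots,\kappa_{n}$ with $e_{S}=\sum_{v\in S'}\lambda_{v}e_{v}+\sum_{i=1}^{n}\kappa_{i}p_{i}$. Since the path set $P$ is a $\mathbb{K}$-basis of $\mathbb{K}Q$, I would compare this with the canonical expression $e_{S}=\sum_{v\in S}e_{v}$ coefficient-by-coefficient: equating the coefficients of the trivial paths forces $S'=S$ (and $\lambda_{v}=1_{\mathbb{K}}$ for each $v\in S$, using $\lambda_{v}\neq 0$), while equating the coefficients of the non-trivial paths forces $n=0$. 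Since $S'=S$ is left closed, we are done.

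I do not expect any genuine obstacle: Corollary~\ref{cor112211} is a direct corollary of Proposition~\ref{prop_left_clo1}. The only step needing a line of care is the coefficient comparison in the second direction, where one uses that the decomposition produced by Proposition~\ref{prop_left_clo1} has its trivial-path part supported exactly on $S'$ and its remaining part a combination of genuine (non-trivial) paths; equivalently, one may simply observe that in the proof of Proposition~\ref{prop_left_clo1} the relevant set is by definition the vertex-support of the trivial-path part of the idempotent, which for $e_{S}$ is $S$, and that proof shows this set is left closed.
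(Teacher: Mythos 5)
Your proof is correct and takes the same route the paper intends: the paper gives no separate argument for the corollary (hence the bare \qed), as it is an immediate specialization of Proposition~\ref{prop_left_clo1}, and your two directions — choosing $\lambda_v=1$, $n=0$ for sufficiency, and the coefficient comparison in the $\mathbb{K}$-basis $P$ for necessity — spell out exactly that specialization. Your closing observation, that the set $S$ constructed in the proof of Proposition~\ref{prop_left_clo1} is by definition the trivial-path support of the idempotent and for $e_S$ is $S$ itself, is indeed the shortest way to see the ``only if'' direction.
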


\begin{definition} \label{dfn334455}
Let $Q$ be a quiver, and let $e\in \mathbb{K}Q$ be left special. The expression  $e = \sum_{v\in S} \lambda_{v}e_{v} + \sum_{i=1}^{n} \kappa_{i}p_{i}$, where this data is as in Proposition \ref{prop_left_clo1}, is called the \emph{standard form} of $e$.
\end{definition}

\begin{proposition} \label{prop_right_clo2}
Let $Q$ be a quiver, with vertex set $V$. Let $e \in \mathbb{K}Q$ be left special, with standard form $e = \sum_{v\in S} \lambda_{v}e_{v} + \sum_{i=1}^{n} \kappa_{i}p_{i}$. Then $e$ is left split if and only if $S$ is right closed, and $\lambda_{v} = \lambda_{v'}$ for all $v$ and $v'$ in $S$ lying in the same connected component of $V$. 
\end{proposition}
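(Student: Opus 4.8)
The plan is to first strip the non-trivial paths off $e$, then prove the two implications separately: the converse will be immediate from Proposition~\ref{prop_cen}, and the forward direction will come from building two explicit non-split extensions. For the reduction, write $e=\sum_{v\in S}\lambda_ve_v+\sum_{i=1}^n\kappa_ip_i$ in standard form and put $e':=\sum_{v\in S}\lambda_ve_v$. Then $e'$ is an idempotent, left special, with standard form $\sum_{v\in S}\lambda_ve_v$, so it carries exactly the same data $S$ and $\{\lambda_v\}_{v\in S}$ as $e$ and the statement to be proved is identical for $e$ and for $e'$. Using the relations $\lambda_{t(p_i)}\kappa_i=\kappa_i$, $\lambda_{s(p_i)}\kappa_i=0$ (for $s(p_i)\in S$) and $t(p_i)\in S$ from Proposition~\ref{prop_left_clo1}, a short multiplication gives $e'e=e$ and $ee'=e'$; hence $eM=e'M$ for every $M\in\cat{M}(\mathbb{K}Q)$, so $\cat{M}(\mathbb{K}Q)_e=\cat{M}(\mathbb{K}Q)_{e'}$ as full subcategories of $\cat{M}(\mathbb{K}Q)$ (Lemma~\ref{lem:02}). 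As left splitness is a property of this subcategory (Definition~\ref{defn:split}), $e$ is left split iff $e'$ is; we may therefore assume $e=\sum_{v\in S}\lambda_ve_v$.

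\emph{Proof of $(\Leftarrow)$.} Since $e$ is left special, $S$ is left closed (Proposition~\ref{prop_left_clo1}); combined with the hypothesis that $S$ is right closed, Remark~\ref{rem_lr_closed} shows $S$ is a union of connected components $C_\alpha$ of $(V,E)$, each finite because $S$ is. Writing $\lambda_v=\mu_\alpha$ for $v\in C_\alpha$ gives $e=\sum_\alpha\mu_\alpha e_{C_\alpha}$ with $e_{C_\alpha}:=\sum_{v\in C_\alpha}e_v$. As the source and target of any path of $Q$ lie in one connected component, each $e_{C_\alpha}$ is a central idempotent of $\mathbb{K}Q$, hence so is $e$; by Proposition~\ref{prop_cen}, $e$ is left split.

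\emph{Proof of $(\Rightarrow)$.} Suppose $e$ is left split. As $e$ is also left special, Corollary~\ref{cor:10} gives $\mathrm{Ext}^1_{\mathbb{K}Q}(N,M)=0$ whenever $M\in\cat{M}(\mathbb{K}Q)_e$ and $eN=0$; we derive a contradiction from the failure of the right-hand conditions. We use that a non-degenerate $\mathbb{K}Q$-module is a representation of $Q$ whose underlying module is $\bigoplus_{x\in V}e_xM$, that $M\in\cat{M}(\mathbb{K}Q)_e$ iff $e_xM=0$ for $x\notin S$ and $\lambda_ve_vM=e_vM$ for $v\in S$ (from the proof of Proposition~\ref{prop_left_clo1}), and that $eN=0$ iff $\lambda_ve_vN=0$ for all $v\in S$; recall also $\lambda_v\neq0$ for $v\in S$. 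If $S$ is not right closed, pick an edge $a\colon w\to v$ with $v\in S$, $w\notin S$, and let $E$ be the representation with $\mathbb{K}\lambda_v$ at $v$ and at $w$, zero elsewhere, $a$ acting as $\mathrm{id}$ and all other edges as $0$. Then $E$ is non-degenerate, the sub-representation $M\le E$ equal to $e_vE$ at $v$ and $0$ elsewhere lies in $\cat{M}(\mathbb{K}Q)_e$, and $N:=E/M$ is supported at $w$, so $eN=0$. The sequence $0\to M\to E\to N\to0$ cannot split, since a complement of $M$ in $E$ would be equal to $0$ at $v$ and $\mathbb{K}\lambda_v$ at $w$, which is impossible because $a$ sends $e_wE$ onto $e_vE\neq0$; this contradicts Corollary~\ref{cor:10}. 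If instead $S$ is right closed but $\lambda_{v_0}\neq\lambda_{v_0'}$ for some $v_0,v_0'\in S$ in a common connected component of $(V,E)$, then $S$ is a union of connected components, so a simple undirected path within $S$ joining $v_0$ and $v_0'$ contains an edge $a\colon x\to y$ with $x,y\in S$ and $\lambda_x\neq\lambda_y$. Left specialness forces $\mathbb{K}\lambda_x\subseteq\mathbb{K}\lambda_y$, so $\mu:=\lambda_y-\lambda_x$ is a non-zero idempotent of $\mathbb{K}$ with $\lambda_x\mu=0$ and $\lambda_y\mu=\mu$. Let $E$ be the representation with $\mathbb{K}\mu$ at $x$, $\mathbb{K}\lambda_y$ at $y$, zero elsewhere, $a$ acting as the inclusion $\mathbb{K}\mu\hookrightarrow\mathbb{K}\lambda_y$ and all other edges as $0$; let $M\le E$ be the sub-representation equal to $e_yE$ at $y$ and $0$ elsewhere. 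Then $M\in\cat{M}(\mathbb{K}Q)_e$, the quotient $N:=E/M$ is supported at $x$ with $e_xN=\mathbb{K}\mu$ annihilated by $\lambda_x$, so $eN=0$, and $M$ again has no complement in $E$. This contradicts Corollary~\ref{cor:10}, finishing the proof.

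\emph{Main obstacle.} The reduction and the converse are short once one spots $e'e=e$, $ee'=e'$ and the centrality of the $e_{C_\alpha}$. The genuine content is the forward direction, and within it the choice of the idempotent $\mu=\lambda_y-\lambda_x$ and of the two-vertex representations $E$ above, which is precisely what produces the non-split extensions that contradict left splitness; verifying that the displayed data really defines sub-representations and computing $E/M$ is routine but is the part requiring care.
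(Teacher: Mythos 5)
Your proof is correct, and while the reduction to $e'=\sum_{v\in S}\lambda_ve_v$ (via $e'e=e$, $ee'=e'$) and the converse via centrality and Proposition~\ref{prop_cen} match the paper, your treatment of the forward direction is genuinely different. The paper works inside the regular module: it invokes Proposition~\ref{prop:30} to decompose $\mathbb{K}Q=\mathbb{K}Qe\mathbb{K}Q\oplus A'$, then shows that a specific scalar multiple of $e_{s(a)}$ lands in $A'$ while its image under left multiplication by $a$ lands in $\mathbb{K}Qe\mathbb{K}Q$, forcing $\lambda_{t(a)}a=0$ and hence a contradiction (and a parallel calculation with $(\lambda_{t(a)}-\lambda_{s(a)})e_{s(a)}$ handles the idempotent condition). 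You instead appeal to Corollary~\ref{cor:10} and exhibit explicit two-vertex representations carrying non-split extensions of a $\cat{M}(\mathbb{K}Q)_e$-module by a module killed by $e$; the choice of the idempotent $\mu=\lambda_y-\lambda_x$ in the second case does the same job as the paper's element $(\lambda_{t(a)}-\lambda_{s(a)})e_{s(a)}$. Both routes ultimately lean on the same decomposition (Proposition~\ref{prop:30}/Corollaries~\ref{cor:10}, \ref{cor:12}), but the paper's is an internal computation in the algebra while yours externalizes it as a representation-theoretic obstruction; yours is arguably more transparent since the non-splitness is visible at a single edge, at the cost of having to verify that the constructed data really are sub-representations. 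One small point to tidy: when you pick the edge $a\colon x\to y$ on the undirected path, you should state explicitly that you orient the labels so that $a$ is the actual quiver edge (so that Proposition~\ref{prop_left_clo1}(2) gives $\mathbb{K}\lambda_x\subseteq\mathbb{K}\lambda_y$ rather than the reverse inclusion), since the undirected path may traverse edges against their orientation.
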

\begin{proof}
If $S$ is right closed and $\lambda_{v} = \lambda_{v'}$ for all $v$ and $v'$ in the same connected component of $V$, then there are no non-trivial paths in the standard form of $e$, and by Remark \ref{rem_lr_closed} $e$ is a central element of $\mathbb{K}Q$. By Proposition \ref{prop_cen}, it is left split. 

Conversely, let $e$ be left split. Suppose $S$ is not right closed. There is an edge $a$ such that $t(a)$ lies in $S$ while $s(a)$ does not. The algebra $\mathbb{K}Q$ admits a decomposition $\mathbb{K}Q = \mathbb{K}Qe\mathbb{K}Q  \oplus A'$, where $A'$ is some $\mathbb{K}Q$-submodule. It is easy to check that in this situation, $\lambda_{t(a)}e_{s(a)} \in A'$. This is a contradiction, as $a \lambda_{t(a)}e_{s(a)} =  \lambda_{t(a)}a =  \lambda_{t(a)}e_{t(a)}a \in  \mathbb{K}Qe\mathbb{K}Q$. Thus $S$ is right closed. 

We are left to show the condition on idempotents. Let $e' := \sum_{v\in S} \lambda_{v}e_{v}$. It is enough to show that for any edge $a$ with $t(a), s(a)\in S$, $\lambda_{t(a)} = \lambda_{s(a)}$. By Proposition \ref{prop_left_clo1}, we know that $\mathbb{K}\lambda_{s(a)}\subseteq \mathbb{K}\lambda_{t(a)}$.
Consider the element $(\lambda_{t(a)} - \lambda_{s(a)})e_{s(a)}$. Since $e$ is left split, there exist $x\in \mathbb{K}Qe\mathbb{K}Q$ and $y \in A'$ such that $(\lambda_{t(a)} - \lambda_{s(a)})e_{s(a)} = x+y.$ Then $e'(\lambda_{t(a)} - \lambda_{s(a)})e_{s(a)} = e'x + e'y$. We have $e'(\lambda_{t(a)} - \lambda_{s(a)})e_{s(a)} = (\lambda_{t(a)} - \lambda_{s(a)})\lambda_{s(a)}e_{s(a)} = 0$, since $\lambda_{t(a)}$ fixes $\lambda_{s(a)}$. Thus $e'x=e'y= 0$. However, by the proof of Proposition \ref{prop_left_clo1}, $e'x=ex=x$. Thus $x=0$, and $(\lambda_{t(a)} - \lambda_{s(a)})e_{s(a)} \in A'$. This implies that $\lambda_{t(a)}a(\lambda_{t(a)} - \lambda_{s(a)})e_{s(a)} \in A'\cap \mathbb{K}Qe\mathbb{K}Q = 0$, and that \[ 0= \lambda_{t(a)}a(\lambda_{t(a)} - \lambda_{s(a)})e_{s(a)} = \lambda_{t(a)}(\lambda_{t(a)} - \lambda_{s(a)})a.\] Thus $\lambda_{t(a)} = \lambda_{t(a)}\lambda_{s(a)}$. Since we know that $\lambda_{s(a)} = \lambda_{t(a)}\lambda_{s(a)}$, we are done. 
\end{proof}

\begin{corollary} \label{cor:887766}
Let $Q$ be a quiver, with vertex set $V$.  Let $S$ be a left closed subset of $V$. Then, $e_{S}$ is left split if and only if $S$ is right closed. \qed
\end{corollary}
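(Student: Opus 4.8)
The plan is to deduce the corollary immediately from Proposition \ref{prop_right_clo2}. Since $S$ is assumed left closed, Corollary \ref{cor112211} shows that $e_{S}$ is left special, so Proposition \ref{prop_right_clo2} is applicable to it. First I would identify the standard form (Definition \ref{dfn334455}) of $e_{S}$. The expansion of $e_{S}$ in the canonical generating set of $\mathbb{K}Q$ is simply $e_{S} = \sum_{v\in S} 1_{\mathbb{K}}\,e_{v}$: the indexing set is the given left closed set $S$, the coefficients are $\lambda_{v} = 1_{\mathbb{K}}$ for every $v\in S$, and no non-trivial paths appear. One checks at once that this data meets the requirements of Proposition \ref{prop_left_clo1}: each $\lambda_{v} = 1_{\mathbb{K}}$ is idempotent, the containment $\mathbb{K}\lambda_{v}\subseteq \mathbb{K}\lambda_{v'}$ is the tautology $\mathbb{K}\subseteq \mathbb{K}$ for any pair of vertices joined by a path, and conditions (3) and (4) are vacuous. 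Hence $\sum_{v\in S} 1_{\mathbb{K}}\,e_{v}$ is indeed the standard form of $e_{S}$.

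Next I would apply Proposition \ref{prop_right_clo2} with $e = e_{S}$. It asserts that $e_{S}$ is left split if and only if $S$ is right closed and $\lambda_{v} = \lambda_{v'}$ for all $v, v'$ in $S$ lying in the same connected component of $V$. In the standard form of $e_{S}$ every coefficient $\lambda_{v}$ equals $1_{\mathbb{K}}$, so the second condition holds automatically. Therefore $e_{S}$ is left split if and only if $S$ is right closed, which is precisely the claim.

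There is essentially no serious obstacle here: the argument is a direct specialization of Proposition \ref{prop_right_clo2}. The only point needing a moment's care is verifying that $\sum_{v\in S}e_{v}$ really is the standard form of $e_{S}$ in the sense of Definition \ref{dfn334455}, but as indicated this is immediate from the bookkeeping conditions of Proposition \ref{prop_left_clo1}.
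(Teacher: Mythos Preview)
Your proposal is correct and is precisely the argument the paper has in mind: the corollary is stated with a \qed and no explicit proof, since it is an immediate specialization of Proposition \ref{prop_right_clo2} once one observes that the standard form of $e_{S}$ has all $\lambda_{v}=1_{\mathbb{K}}$ and no non-trivial paths.
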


\begin{example}
Suppose $Q$ is the quiver $v_{1} \xrightarrow{p} v_{2}$ and let $S=\{v_{2}\}$. By Corollaries \ref{cor112211} and \ref{cor:887766}, it follows that the idempotent $e_{S}$ is left special but not left split.
\end{example}

\begin{proposition} \label{prop_dec}
Suppose $\mathbb{K}$ has no idempotents apart from $0$ and $1$. Let $Q$ be a quiver, with vertex set $V$. A family of left special idempotents $\mathcal{E} = \{e_{i}\}_{i\in I}$ is full if and only if there is a family of left closed subsets of $V$, $\mathcal{S} := \{S_{i}\}_{i\in I}$, with $S_{i}\cap S_{j} = \emptyset$ for $i\neq j$ and $V = \bigcup_{i\in I} S_{i}$, such that $e_{i} = e_{S_{i}}$ for all $i\in I$.
\end{proposition}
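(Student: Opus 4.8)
The plan is to unwind the notion of ``full'' via criterion (c.) of Theorem~\ref{prop:896} and then match it against the combinatorial descriptions of left special idempotents (Proposition~\ref{prop_left_clo1}) and left split idempotents (Proposition~\ref{prop_right_clo2}), the hypothesis on $\mathbb{K}$ being what rigidifies the scalars appearing in a standard form. The one ad hoc ingredient I would establish first is an explicit description of the relevant two-sided ideals: if $S\subseteq V$ is left closed, then $\mathbb{K}Q\,e_{S}\,\mathbb{K}Q$ is the $\mathbb{K}$-span of the paths $p$ with $t(p)\in S$, and for left closed $S,S'$ the subspace $e_{S}\,\mathbb{K}Q\,e_{S'}$ is the span of the paths $p$ with $s(p)\in S'$ and $t(p)\in S$. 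Both follow at once from the multiplication rules, using that $e_{S}$ acts on a path as the identity or as zero according as its target does or does not lie in $S$, together with the fact that $s(q)\in S$ forces $t(q)\in S$ when $S$ is left closed. Two consequences will be used on both sides of the equivalence: $\sum_{i}\mathbb{K}Q\,e_{S_{i}}\,\mathbb{K}Q=\mathbb{K}Q$ if and only if $V=\bigcup_{i}S_{i}$ (test on trivial paths), and $e_{S_{i}}\,\mathbb{K}Q\,e_{S_{j}}=0$ if and only if $S_{i}\cap S_{j}=\emptyset$ (when $v\in S_{i}\cap S_{j}$ one has $e_{v}=e_{S_{i}}e_{v}e_{S_{j}}\neq 0$).

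\textbf{The ``if'' direction.} Assume the $S_{i}$ are left closed, pairwise disjoint, and cover $V$, and put $e_{i}=e_{S_{i}}$. Each $e_{i}$ is left special by Corollary~\ref{cor112211}. By the computations above, pairwise disjointness of the $S_{i}$ says exactly that the family is pairwise strongly orthogonal, and $V=\bigcup_{i}S_{i}$ says exactly that $\sum_{i}\mathbb{K}Q\,e_{i}\,\mathbb{K}Q=\mathbb{K}Q$. Hence condition (c.) of Theorem~\ref{prop:896} holds, so $\mathcal{E}$ is full. (This direction does not use the hypothesis on $\mathbb{K}$.)

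\textbf{The ``only if'' direction.} Assume $\mathcal{E}=\{e_{i}\}$ is full. By Corollary~\ref{cor:897} every $e_{i}$ is left split, so it has a standard form $e_{i}=\sum_{v\in S_{i}}\lambda_{v}e_{v}+\sum_{j}\kappa_{j}p_{j}$ with $S_{i}$ left closed (Proposition~\ref{prop_left_clo1}); by Proposition~\ref{prop_right_clo2}, $S_{i}$ is moreover right closed. Now the hypothesis on $\mathbb{K}$ enters: each $\lambda_{v}$ is a nonzero idempotent of $\mathbb{K}$, hence $\lambda_{v}=1$ for all $v\in S_{i}$. If some path $p_{j}$ occurred in the standard form, then from $t(p_{j})\in S_{i}$ and the right closure of $S_{i}$ a short induction back along the edges of $p_{j}$ would give $s(p_{j})\in S_{i}$; but then the standard-form constraint $\lambda_{s(p_{j})}\kappa_{j}=0$ (Proposition~\ref{prop_left_clo1}(4)) would force $\kappa_{j}=0$, contradicting $\kappa_{j}\neq 0$. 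So there are no paths, i.e.\ $e_{i}=e_{S_{i}}$ with $S_{i}$ left closed. Finally, fullness gives conditions (c.)(i) and (c.)(ii) of Theorem~\ref{prop:896}, and by the preliminary computations these say, respectively, that $S_{i}\cap S_{j}=\emptyset$ for $i\neq j$ and that $V=\bigcup_{i}S_{i}$. This is the asserted description.

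\textbf{Main obstacle.} The only genuinely delicate point is the ``only if'' direction's passage from ``left special and left split over a connected $\mathbb{K}$'' to ``$e_{i}=e_{S_{i}}$'': one has to combine Proposition~\ref{prop_right_clo2} (to obtain right closure, hence control over the sources of the paths $p_{j}$) with the rigidity of the scalars coming from the hypothesis on $\mathbb{K}$, and it is exactly here that the hypothesis is indispensable (without it, an orthogonal idempotent decomposition of the scalar ring already produces full families not of the stated shape, even for a one-vertex quiver). Everything else is bookkeeping: translating Theorem~\ref{prop:896}(c.) into the two ideal conditions and reading those off the explicit shape of $\mathbb{K}Q\,e_{S}\,\mathbb{K}Q$.
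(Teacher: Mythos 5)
Your proposal is correct and follows essentially the same route as the paper's proof: deduce via Corollary \ref{cor:897} that every $e_{i}$ in a full family is left split, invoke Propositions \ref{prop_left_clo1} and \ref{prop_right_clo2} to get left- and right-closed $S_{i}$ with $e_{i}=e_{S_{i}}$, and translate conditions (c.)(i) and (c.)(ii) of Theorem \ref{prop:896} into disjointness and covering via the explicit description of $\mathbb{K}Q\,e_{S}\,\mathbb{K}Q$ and $e_{S}\,\mathbb{K}Q\,e_{S'}$. What you add is a careful justification — absent from the paper's one-line assertion — of why $e_{i}$ must equal $e_{S_{i}}$: the hypothesis on $\mathbb{K}$ forces each scalar $\lambda_{v}$ in the standard form to be $1$, and right closure combined with the constraint $\lambda_{s(p_{j})}\kappa_{j}=0$ from Proposition \ref{prop_left_clo1}(4) excludes all non-trivial path terms. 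This is exactly the right way to fill the gap, and your remark that the ``if'' direction never uses the hypothesis on $\mathbb{K}$ is a correct and worthwhile observation.
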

\begin{proof}
First, consider a family $\mathcal{S} := \{S_{i}\}_{i\in I}$ of left closed subsets of $V$, with $S_{i}\cap S_{j} = \emptyset$ for $i\neq j$ and $V = \bigcup_{i\in I} S_{i}$. By Corollary \ref{cor112211}, each $e_{S_{i}}$ is left special. Let $S_{i}$ and $S_{j}$ belong to $\mathcal{S}$. $e_{S_{j}}\mathbb{K}Qe_{S_{i}}$ is the module generated by all paths in $Q$ which start at a vertex in $S_{i}$ and end at a vertex in $S_{j}$. Since $S_{i}$ is left closed, such a path must have its target lie in $S_{i}\cap S_{j}$. We conclude that $e_{S_{j}}\mathbb{K}Qe_{S_{i}} = 0$. Thus, for $i\neq j$ the idempotents $e_{S_{i}}$ and $e_{S_{j}}$ are strongly orthogonal. $\mathbb{K}Qe_{S_{i}}\mathbb{K}Q$ is the module generated by all paths which pass through a vertex in $S_{i}$. As $S_{i}$ is left closed, $\mathbb{K}Qe_{S_{i}}\mathbb{K}Q = e_{S_{i}}\mathbb{K}Q$. Since the target of  every path in $\mathbb{K}Q$ lies in some $S_{i}\in \mathcal{S}$, it follows that $\{e_{S_{i}}\}_{i\in I}$ is full.

For the converse, suppose  $\mathcal{E} = \{e_{i}\}_{i\in I}$ is a full family of left special idempotents. By Corollary \ref{cor:897}, every idempotent in $\mathcal{E}$ is left split. By Propositions \ref{prop_left_clo1} and \ref{prop_right_clo2}, for every $e_{i}\in \mathcal{E}$ there is a right and left closed subset $S_{i}$ of $V$ such that $e_{i} = e_{S_{i}}.$ Since the elements of $\mathcal{E}$ are pairwise strongly orthogonal, for $i\neq j$, we get that $S_{I}\cap S_{j} = \emptyset$. It is also straightforward to see that $V = \bigcup_{i\in I} S_{i}$.
\end{proof}

\begin{remark}
Similar arguments provide a variant of Proposition \ref{prop_dec} without the additional condition on idempotents in $\mathbb{K}$, which we omit here for the sake of brevity.
\end{remark}

\end{document}